\title{The Generalized Matrix Norm Problem\thanks{This work has been accepted for publication in the \emph{SIAM Journal on Matrix Analysis and Applications}. The final version is available at \url{https://doi.org/10.1137/23M1605545}
		\funding{This work was funded by the European Research Council (ERC) under grant no.~817629 (project justITSELF).}}}
\author{Adrian Kulmburg\thanks{School of Computation, Information and Technology, Technical University of Munich
		(\email{adrian.kulmburg@tum.de}, \url{ce.cit.tum.de/cps/members/adrian-kulmburg-msc/}).}}
\pgfplotsset{compat=newest}
\xapptocmd\bfseries{\boldmath}{}{}
\newcommand{\naturals}{\mathbb{N}}
\newcommand{\reals}{\mathbb{R}}
\newcommand{\Hset}{\mathcal{H}}
\newcommand{\Vset}{\mathcal{V}}
\newcommand{\Wset}{\mathcal{W}}
\newcommand{\Xset}{\mathcal{X}}
\newcommand{\Yset}{\mathcal{Y}}
\newcommand{\ball}{\mathcal{B}}
\DeclareMathOperator{\trace}{Tr}
\DeclareMathOperator{\sign}{sign}
\DeclareMathOperator{\diag}{diag}
\DeclareMathOperator{\Diag}{Diag}
\DeclareMathOperator{\Image}{Im}
\renewcommand{\v}[1]{\vec{#1}}
\newcommand{\mat}[1]{\bm{\underline{\smash{#1}}}}
\newcommand{\mean}{\mathbb{E}}
\newcommand{\grand}{\mathsf{g}}
\newcommand{\xrand}{\mathsf{x}}
\newcommand{\problem}[1]{\mathsf{#1}}
\newcommand{\Poly}{\problem{P}}
\newcommand{\NP}{\problem{NP}}
\newcommand{\APX}{\problem{APX}}
\newcommand{\Set}[2]{\left\{#1\;\middle|\;#2\right\}}
\NewDocumentCommand{\iprod}{gg}{\left\langle\IfValueTF{#1}{#1}{\cdot},\IfValueTF{#2}{#2}{\cdot}\right\rangle}
\NewDocumentCommand{\absiprod}{gg}{\left|\left\langle\IfValueTF{#1}{#1}{\cdot},\IfValueTF{#2}{#2}{\cdot}\right\rangle\right|}
\NewDocumentCommand{\norm}{g}{\left\|\IfValueTF{#1}{#1}{\cdot}\right\|}
\NewDocumentCommand{\nnorm}{g}{\|\IfValueTF{#1}{#1}{\cdot}\|}
\DeclareRobustCommand\widecheckinternal[1]{{\mathpalette\@widecheckinternal{#1}}}
\def\@widecheckinternal#1#2{%
	\setbox\z@\hbox{\m@th$#1#2$}%
	\setbox\tw@\hbox{\m@th$#1%
		\widehat{\vrule\@width\z@\@height\ht\z@
			\vrule\@height\z@\@width\wd\z@}$}%
	\dp\tw@-2\ht\z@
	\@tempdima\ht\z@ \advance\@tempdima2\ht\tw@ \divide\@tempdima\thr@@
	\setbox\tw@\hbox{\raise1.05\@tempdima\hbox{\scalebox{1}[-1]{\lower\@tempdima\box\tw@}}}%
	{\ooalign{\box\tw@ \cr \box\z@}}}
\newcommand{\widecheck}[1]{\,\widecheckinternal{\kern -2pt #1}}
\definecolor{Blue}{RGB}{0,92,171}
\definecolor{Red}{RGB}{227,27,35}
\definecolor{Yellow}{RGB}{255,195,37}
\definecolor{YellowDark}{RGB}{219, 126, 8}
\newcommand{\padding}{0.3}
\newcommand{\FPTAS}{\mathsf{FPTAS}}
\newcommand{\figOneShift}{1.5}
\begin{document}
	
	\maketitle
	
	\begin{abstract}
		We study the computability of the operator norm of a matrix with respect to norms induced by linear operators. Our findings reveal that this problem can be solved in polynomial time in certain situations, and we discuss how it can be approximated in other cases. Along the way, we investigate the concept of push-forward and pull-back of seminorms, which leads us to uncover novel duality principles that come into play when optimizing over the unit ball of norms.
	\end{abstract}
	
	\begin{keywords}
		operator norm, push-forward norm, pull-back norm, approximation
	\end{keywords}
	
	\begin{MSCcodes}
		15A60, 65F35, 68Q25
	\end{MSCcodes}
	
	\section{Introduction}
	One way to assess the effect of a linear operator $L : \Vset \rightarrow \Wset$ between normed vector spaces $(\Vset, \norm_\Vset)$ and $(\Wset, \norm_\Wset)$ is to compute the operator norm
	\begin{equation}
		\label{eq:basic_problem}
		\norm{L} := \sup_{\norm{v}_{\Vset}\leq 1}\norm{L(v)}_{\Wset}.
	\end{equation}
	This has a wide range of applications, depending on the norms involved. One particular class of norms for which this problem has been studied extensively \cite{Bhaskara2011,Bhattiprolu2018,Grothendieck1956,Nesterov1998,Steinberg2005} are the vector $p$-norms on $\reals^n$ for $p\in[1,\infty]$. In that case, \eqref{eq:basic_problem} can be formulated as in \cite[Chapter 5.6]{Horn2012} for a matrix $\mat{A} \in \reals^{n\times m}$:
	\begin{equation}
		\label{eq:matrix_problem}
		\norm{\mat{A}}_{p\mapsto q} := \max_{\norm{\v{v}}_p\leq 1} \norm{\mat{A}\v{v}}_q.
	\end{equation}
	Computing $\norm{\mat{A}}_{p\mapsto q}$ is known to be $\NP$-hard for most choices of $p$ and $q$ \cite{Bhaskara2011,Bhattiprolu2023}, although constant-ratio approximations occasionally do exist \cite{Bhattiprolu2018,Steinberg2005}. The problem is only known to be solvable in polynomial time for $p=q=2$ or when $p=1$ or $q = \infty$ (see also \cite{Steinberg2005}).
	The vector $p$-norms possess a lot of symmetries, and one might be interested in investigating more advanced norms. A simple generalization is to consider norms of the form
	\begin{equation*}
		\v{v} \mapsto \norm{\mat{M}\v{v}}_p,
	\end{equation*}
	where $\mat{M}\in\reals^{m\times n}$ is a matrix and $p\in[1,\infty]$. As we will see in Section \ref{sec:push-forward_pull-back}, this is a norm if $\mat{M}$ represents an injective operator, and so a generalization of \eqref{eq:matrix_problem} is
	\begin{equation}
		\label{eq:matrix_problem_generalized}
		\norm{\mat{A}}_{p\mapsto q;\mat{B},\mat{C}} := \max_{\norm{\mat{B}\v{v}}_p\leq 1} \norm{\mat{C}\,\mat{A}\v{v}}_q,
	\end{equation}
	for matrices $\mat{B}$ and $\mat{C}$. Of course, computing \eqref{eq:matrix_problem_generalized} is as hard as computing
	\begin{equation}
		\label{eq:matrix_problem_calm_down}
		\norm{\mat{A}}_{p\mapsto q;\mat{B}} := \max_{\norm{\mat{B}\v{v}}_p\leq 1} \norm{\mat{A}\v{v}}_q,
	\end{equation}
	since $\norm{\mat{A}}_{p\mapsto q;\mat{B},\mat{C}} = \norm{\mat{C}\,\mat{A}}_{p\mapsto q;\mat{B}}$. Therefore, for our analysis, we can focus on \eqref{eq:matrix_problem_calm_down}, which we call the \emph{generalized matrix $p\mapsto q; \mat{B}$-norm} of $\mat{A}$ (as opposed to the \emph{classical} $p\mapsto q$-norm from \eqref{eq:matrix_problem}).
	
	One key aspect used in \cite{Bhattiprolu2018,Steinberg2005} to analyze $\norm{\mat{A}}_{p\mapsto q}$ is the fact that
	\begin{equation*}
		\norm{\mat{A}}_{p\mapsto q} = \norm{\mat{A}^\top}_{q^*\mapsto p^*},
	\end{equation*}
	where $p^*$ and $q^*$ are the Hölder conjugates of $p$ and $q$, respectively. This allows us to concentrate on the cases where $1/p + 1/q \geq 1$. Unfortunately, such a symmetry does not exist for $\norm{\mat{A}}_{p\mapsto q; \mat{B}}$, as we will see in Example \ref{ex:dual_B_p_not_p_star}. This lead us to new ways to reformulate $\norm{\mat{A}}_{p\mapsto q; \mat{B}}$, which revealed a deep connection to the containment problem (that is, checking whether a set is contained in another) for ellipsotopes \cite{Kousik2022} (see also Example \ref{rmk:ellipsotopes}), which is treated in the companion paper \cite{Kulmburg2024} where we analyze in detail the computational complexity of the containment problem. An overview of the resulting complexity of computing $\norm{\mat{A}}_{p\mapsto q; \mat{B}}$ for different values of $p$ and $q$ can be found in Fig. \ref{fig:hardness_square}. Note that, for clarity, in \cite{Kulmburg2024} the notation is chosen so that one considers the containment problem of a $p$-ellipsotope in a $q$-ellipsotope. Therefore, for the remainder of the present article, whenever the reader is directed to a result from \cite{Kulmburg2024}, the values of $p$ and $q$ in the present document should be transformed into $p\gets q^*$ and $q \gets p^*$ for \cite{Kulmburg2024}.
	
	\begin{figure}[t!]
		\centering
			\begin{tikzpicture}[scale=0.5]
			
			\draw[-] (0,0) -- (10,0) -- (10,10) -- (0,10) -- (0,0);
			
			\draw[-] (0,10) -- (-0.1-\padding,10);
			\draw[-] (0,5) -- (-0.1-\padding,5);
			\draw[-] (0,0) -- (-0.1-\padding,0);
			
			\draw[-] (0,0) -- (0,-0.1-\padding);
			\draw[-] (5,0) -- (5,-0.1-\padding);
			\draw[-] (10,0) -- (10,-0.1-\padding);
			
			\draw[->] (0,10) -- (0,11);
			\draw[->] (10,0) -- (11,0);
			
			\fill[pattern=my large dots, pattern color=Red] (0,0) rectangle (5,5);
			\fill[color=Red, opacity=0.3] (0,0) rectangle (5,5);
			\fill[color=Red] (-\padding/2,-\padding/2) rectangle (5-\padding/2,\padding/2);
			
			\fill[pattern=my large dots, pattern color=Red] (5,5) rectangle (10,10);
			\fill[color=Red, opacity=0.3] (5,5) rectangle (10,10);
			\fill[color=Red] (10-\padding/2,5+\padding/2) rectangle (10+\padding/2,10+\padding/2);
			
			\fill[pattern=crosshatch, pattern color=Blue] (5,0) rectangle (10,5);
			\fill[color=Blue, opacity=0.3] (5,0) rectangle (10,5);
			\fill[color=Blue] (5-\padding/2,-\padding/2) rectangle (5+\padding/2,5+\padding/2);
			\fill[color=Blue] (5-\padding/2,5-\padding/2) rectangle (10+\padding/2,5+\padding/2);
			\fill[color=Blue] (5-\padding/2,-\padding/2) rectangle (10+\padding/2,\padding/2);
			\fill[color=Blue] (10-\padding/2,-\padding/2) rectangle (10+\padding/2,5+\padding/2);
			
			\fill[color=Yellow] (-\padding/2,10-\padding/2) rectangle (10+\padding/2,10+\padding/2);
			\fill[color=Yellow] (5,5) circle (\padding);
			
			\fill[color=black] (-\padding/2,5-\padding/2) rectangle (\padding/2,10-\padding/2);
			\fill[color=Red] (-\padding/2,-\padding/2) rectangle (\padding/2,5-\padding/2);
			
			\draw (0,11.5) node {$\bm{q}$};
			\draw (11.5,0) node {$\bm{p}$};
			
			\draw (-0.5-\padding,10) node {$\infty$};
			\draw (-0.5-\padding,5) node {$2$};
			\draw (-0.5-\padding,0) node {$1$};
			
			\draw (10,-0.5-\padding) node {$\infty$};
			\draw (5,-0.5-\padding) node {$2$};
			\draw (0,-0.5-\padding) node {$1$};

			\fill[color=Yellow] (-3,-3.5) rectangle (-2,-4);
			\fill[color=black] (-3,-4.5) rectangle (-2,-5);

			\draw (-2,-3.7) node[anchor=west] { : In $\FPTAS$};
			\draw (-2,-4.7) node[anchor=west] { : Not in $\FPTAS$};

			\fill[pattern=crosshatch, pattern color=Blue] (5,-3.5) rectangle (6,-4);
			\fill[color=Blue, opacity=0.3] (5,-3.5) rectangle (6,-4);
			\fill[pattern=my large dots, pattern color=Red] (5,-4.5) rectangle (6,-5);
			\fill[color=Red, opacity=0.3] (5,-4.5) rectangle (6,-5);
			\draw (6,-3.7) node[anchor=west] { : $\tau$-inapproximable};
			\draw (6,-4.7) node[anchor=west] { : Inapproximable};

		\end{tikzpicture}
		
		\caption{An overview of the hardness results presented in \cite{Kulmburg2024}, adapted to reflect the complexity of computing generalized $p\mapsto q;\mat{B}$-norms. The complexity class $\FPTAS$ is the class of problems that can be approximated within arbitrary accuracy in polynomial time with respect to the representation size of $\mat{A}$ and $\mat{B}$ and the accuracy parameter. A solid boundary line means that the boundary is included in the region. For more information, we refer to \cite{Kulmburg2024}. This graph was inspired by \cite[Fig.~1.1.]{Bhattiprolu2023}.}
		\label{fig:hardness_square}
	\end{figure}
	
	The paper is organized as follows: After introducing some notation, Section \ref{sec:prelims} reviews some core concepts such as duality and extended seminorms. We continue in Section \ref{sec:push-forward_pull-back} by addressing the concepts of push-forward and pull-back of a norm and reveal that these two notions are dual to each other. We then turn our attention to the generalized matrix norm problem by discussing instances where $\norm{\mat{A}}_{p\mapsto q;\mat{B}}$ can be computed in polynomial time in Section \ref{sec:tractable}, while in Section \ref{sec:approximable} we treat instances where the problem can be approximated (a graphical overview of our results is shown in Fig. \ref{fig:approximability_square}). Finally, in Section \ref{sec:numerical_evaluations}, we numerically verify the precision of some of the approximations from Section \ref{sec:approximable}.
	
	\begin{figure}[t!]
		\centering
			\begin{tikzpicture}[scale=0.5]
			
			\draw[-] (0,0) -- (10,0) -- (10,10) -- (0,10) -- (0,0);
			
			\draw[-] (0,10) -- (-0.1-\padding,10);
			\draw[-] (0,5) -- (-0.1-\padding,5);
			\draw[-] (0,0) -- (-0.1-\padding,0);
			
			\draw[-] (0,0) -- (0,-0.1-\padding);
			\draw[-] (5,0) -- (5,-0.1-\padding);
			\draw[-] (10,0) -- (10,-0.1-\padding);
			
			\draw[->] (0,10) -- (0,11);
			\draw[->] (10,0) -- (11,0);
			
			\fill[pattern=crosshatch, pattern color=Blue] (5,0) rectangle (10-\padding/2,5);
			\fill[color=Blue, opacity=0.3] (5,0) rectangle (10-\padding/2,5);
			\fill[color=Blue] (5-\padding/2,-\padding/2) rectangle (5+\padding/2,5+\padding/2);
			\fill[color=Blue] (5-\padding/2,5-\padding/2) rectangle (10-\padding/2,5+\padding/2);
			\fill[color=black] (5-\padding/2,-\padding/2) rectangle (10-\padding/2,\padding/2);
			
			\fill[color=Yellow] (-\padding/2,10-\padding/2) rectangle (10+\padding/2,10+\padding/2);
			\fill[color=Yellow] (5,5) circle (\padding);
			
			\fill[color=Red] (-\padding/2,-\padding/2) rectangle (5-\padding/2,\padding/2);
			
			\draw (0,11.5) node {$\bm{q}$};
			\draw (11.5,0) node {$\bm{p}$};
			
			\draw (-0.5-\padding,10) node {$\infty$};
			\draw (-0.5-\padding,5) node {$2$};
			\draw (-0.5-\padding,0) node {$1$};
			
			\draw (10,-0.5-\padding) node {$\infty$};
			\draw (5,-0.5-\padding) node {$2$};
			\draw (0,-0.5-\padding) node {$1$};

			\fill[color=Yellow] (-3-\figOneShift,-3.5) rectangle (-2-\figOneShift,-4);
			\fill[color=black] (-3-\figOneShift,-4.5) rectangle (-2-\figOneShift,-5);

			\draw (-2-\figOneShift,-3.7) node[anchor=west] { : Tractable};
			\draw (-2-\figOneShift,-4.7) node[anchor=west] { : $\gamma_p/\gamma_1$-approximable};

			\fill[pattern=crosshatch, pattern color=Blue] (7-\figOneShift,-3.5) rectangle (8-\figOneShift,-4);
			\fill[color=Blue, opacity=0.3] (7-\figOneShift,-3.5) rectangle (8-\figOneShift,-4);
			\fill[color=Red] (7-\figOneShift,-4.5) rectangle (8-\figOneShift,-5);
			\fill[color=Red, opacity=0.3] (7-\figOneShift,-4.5) rectangle (8-\figOneShift,-5);
			\draw (8-\figOneShift,-3.7) node[anchor=west] { : $\gamma_p\gamma_{q^*}$-approximable};
			\draw (8-\figOneShift,-4.7) node[anchor=west] { : $C(p,l,m)$-approximable};

		\end{tikzpicture}
		
		\caption{An overview of our results from Sections \ref{sec:tractable} and \ref{sec:approximable} for the computability/approximability of generalized matrix norms. A solid boundary line means that the boundary is included in the region. Here, $C(p,l,m) = \min\{\gamma_p\sqrt{m},l^{1/p-1/2}\}/\gamma_1$, where $m$ and $l$ are the number of rows of $\mat{A}$ and $\mat{B}$, respectively, and $\gamma_s$ for $s\in [1,\infty)$ is the $s$-th root of the $s$-th absolute moment of a Gaussian random variable. This graph was inspired by \cite[Fig.~1.1.]{Bhattiprolu2023}.}
		\label{fig:approximability_square}
	\end{figure}
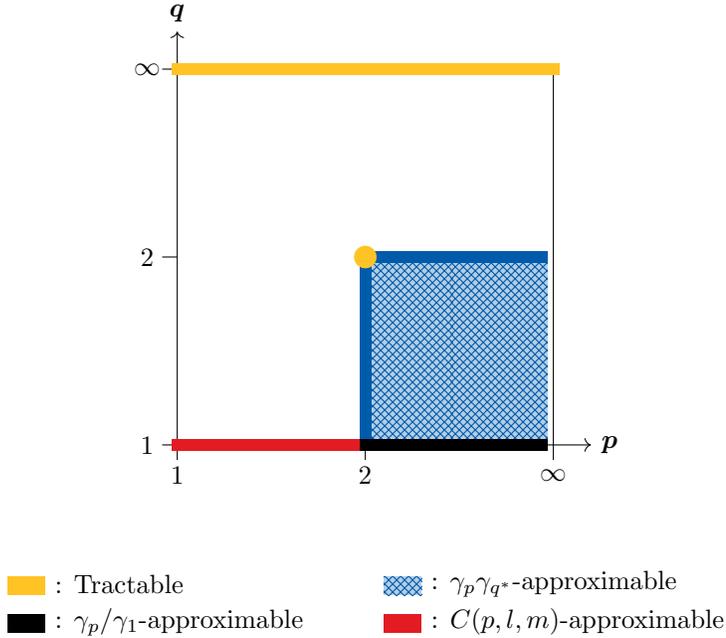
	
	\section{Preliminaries}
	\label{sec:prelims}
	\subsection{Basic Notation}
	\label{sec:basic_notation}
	A letter with an arrow $\v{v}$ represents a vector in $\reals^n$, matrices $\mat{M}$ in $\reals^{n \times m}$ are denoted by bold, underlined letters, whereas vectors in generic vector spaces (i.e., not necessarily $\reals^n$ or $\reals^{n\times m}$) are written using simple letters, e.g., $v$ or $w$. 
	The vectors $\v{e}_i \in \reals^n$ for $i=1,...,n$ are the canonical basis vectors of $\reals^n$. For $\v{v}\in \reals^n$, $v_i$ for $i=1,...,n$ are the coordinates of $\v{v}$. Similarly, for a matrix $\mat{M}$, the coordinate in the $i$-th row and $j$-th column is $M_{ij}$. The notation $\mat{M} \succeq 0$ means $\mat{M}$ is positive semidefinite, $\mat{M}^+$ is the Moore-Penrose pseudoinverse of $\mat{M}$, and $\trace(\mat{M})$ is the trace of $\mat{M}$. For simplicity, we call a matrix injective/surjective/bijective if the corresponding linear map is injective/surjective/bijective.
	The $n$-dimensional identity matrix is $\mat{I}_n$, whereas $\mat{0}_{n\times m}$ is the $n\times m$-matrix filled with zeros, $\v{1}_n$ the $n$-dimensional vector with only ones and $\v{0}_n$ the $n$-dimensional vector with only zeros, and we drop the indices unless there is a risk of confusion.
	For $\v{v}\in\reals^n$, $\Diag(\v{v})$ is the diagonal matrix with the entries of $\v{v}$ on its diagonal. On the other hand, for a matrix $\mat{M}$, $\diag(\mat{M})$ is the vector corresponding to the diagonal entries of $\mat{M}$. For a linear map $L : \Vset \rightarrow \Wset$, the kernel of $L$ is $\ker(L)$ and its image is $\Image(L)$.
	
	For $\v{v}\in\reals^n$ and $p\in[1,\infty)$, $\norm{\v{v}}_p := \sqrt[p]{|v_1|^p+...+|v_n|^p}$ is the vector $\bm{p}$-norm of $\v{v}$, and for $p=\infty$ we set $\norm{\v{v}}_{\infty} := \max_i |v_i|$. The (closed) unit ball of the $p$-norm is $\ball_p$. More generally, for a normed space $(\Vset, \norm_{\Vset})$, we denote its unit ball by $\ball_{\Vset}$. For $p\in[1,\infty]$, $p^*$ is the Hölder conjugate of $p$, defined through $\frac{1}{p}+\frac{1}{p^*} = 1$.
	If $\v{v},\v{w}\in\reals^n$, we denote the (Euclidean) inner product as $\iprod{\v{v}}{\v{w}}_{\reals^n} = \v{v}^\top\v{w} = v_1w_1+...+v_nw_n$.
	
	For functions $f : X \rightarrow Y$ and $g : Y \rightarrow Z$, the function $g \circ f$ is the composition $x \mapsto g(f(x))$.
	For a random variable $\xrand\in\Xset$ on some measurable space $\Xset$, and a function $f:\Xset \rightarrow \reals$, the expectation value of $f(\xrand)$ with respect to $\xrand$ is $\mean_\xrand[f(\xrand)]$. The notation $\grand \sim \mathcal{N}(\mu, \sigma)$ means $\grand$ is a Gaussian random variable with mean $\mu$ and standard deviation $\sigma$.
	
	\subsection{Extended Seminorms}
	Seminorms are well documented in the mathematical literature (see, for example, \cite[p. 200]{Kubrusly2011}). For certain arguments, we require a similar generalization to that of \cite{Beer2015} for norms:
	\begin{definition}[Extended Seminorms and Norms]
		Let $\Vset$ be a real vector space. A function $\eta : \Vset \rightarrow [0,\infty]$ is an \emph{extended seminorm}, if $\eta(0) = 0$ and
		\begin{itemize}
			\item (Triangle inequality) $$\eta(v+w) \leq \eta(v) + \eta(w), \quad \forall v,w\in \Vset.$$
			\item (Absolute homogeneity) $$\eta(cv) = |c|\eta(v), \quad \forall c\in\reals\backslash\{0\}, v\in\Vset.$$ 
		\end{itemize}
		If, in addition, $\eta$ is positive definite, i.e.,
		$$\forall v \in \Vset, \quad\eta(v) = 0 \;\Rightarrow \; v = 0,$$
		then $\eta$ is an \emph{extended norm}.
	\end{definition}
	The major difference with respect to (semi)norms is that extended (semi)norms may admit a value of $\infty$, which requires adapting the absolute homogeneity assumption to avoid issues when multiplying $\infty$ with $0$.
	
	\subsection{Duality}
	\label{sec:duality}
	Let $(\Vset, \norm_{\Vset})$ be a (real) normed vector space. We denote by $\Vset^*$ its dual vector space, i.e., $\Vset^* = \Set{f: \Vset \rightarrow \reals}{f \text{ is linear and continuous}}$, endowed with the \emph{dual norm} $\norm^*$ (see \cite[Chapter 1.3]{Buehler2018}), defined for $f\in \Vset^*$ as
	\begin{equation}
		\label{eq:dual_norm_def}
		\norm{f}^* = \sup_{\norm{x} \leq 1} f(x).
	\end{equation}
	We define the bidual $\Vset^{**}$ as the dual of $\Vset^*$, endowed with the norm $\norm^{**}$ which is the dual of $\norm^*$. The space $\Vset$ is said to be \emph{reflexive} if the \emph{evaluation map} $\iota_{\Vset}$ is an isometric isomorphism (see \cite[Chapter 2.4]{Buehler2018}), where
	\begin{equation}
		\begin{split}
			\iota_{\Vset} :\; &\Vset \rightarrow \Vset^{**}\\
			&v \mapsto (f \mapsto f(v))
		\end{split}
	\end{equation} 
	
	According to the Riesz representation theorem, if $\Hset$ is a Hilbert space (e.g., $\Hset = \reals^n$) with inner product $\iprod_{\Hset}$, the map $R_{\Hset} : \Hset \rightarrow \Hset^*$ sending $y\in\Hset$ to $(x \mapsto \iprod{y}{x}_{\Hset}) \in \Hset^*$ is an isometric isomorphism. Consequently, we may see certain norms on $\Hset^*$ as norms on $\Hset$. Specifically, let $\norm$ be a norm on $\Hset$, which is not necessarily the norm $\norm_{\Hset}$ induced by the inner product, but is equivalent to it (so that both norms induce the same topology). Then the dual norm $\norm^*$ is well-defined on $\Hset^*$, and we can define the corresponding norm $x \mapsto \norm{R_{\Hset}(x)}^*$ on $\Hset$, which for simplicity we also denote by $\norm{x}^*$ unless there is a risk of confusion.
	For example, $\|\v{x}\|_{p}^* = \|\v{x}\|_{p^*}$ for the vector $p$-norms on $\reals^n$ according to \cite[Chapter A.1.6]{Boyd2004}. For a norm $\norm$ on the space of matrices $\reals^{n\times m}$, the dual norm (see \cite[Chapter A.1.6]{Boyd2004}) is
	\begin{equation}
		\label{eq:matrix_norm_dual_def}
		\norm{\mat{Y}}^* = \sup_{\norm{\mat{X}}\leq 1} \trace(\mat{X}^\top \mat{Y}).
	\end{equation}
	\subsection{Entry-Wise Matrix Norms}
	Based on the vector $p$-norms, we define the $L_{p,q}$- and $L_{p,q}^\top$-norms for matrices:
	\begin{definition}[$L_{p,q}$- and $L_{p,q}^\top$-norms]
		\label{def:matrix_L_p_q_norms}
		For $p,q\in[1,\infty]$ and a matrix $\mat{A}\in\reals^{n\times m}$ with columns $\v{a}_1,\hdots,\v{a}_m$, the $L_{p,q}$-norm is
		\begin{equation}
			\norm{\mat{A}}_{L_{p,q}} = \norm{ \begin{pmatrix}\norm{\v{a}_1}_p & \cdots & \norm{\v{a}_m}_p\end{pmatrix}^\top }_q.
		\end{equation}
		For $p, q \in [1, \infty)$, this equates to
		\begin{equation}
			\label{eq:L_pq_def}
			\norm{\mat{A}}_{L_{p,q}} = \left(\sum_{j=1}^m \left( \sum_{i=1}^n |A_{ij}|^p\right)^{q/p}\right)^{1/q}.
		\end{equation}
		The transposed $L_{p,q}$-norms, or $L_{p,q}^\top$-norms, are defined as
		\begin{equation}
			\norm{\mat{A}}_{L_{p,q}^\top} = \nnorm{\mat{A}^\top}_{L_{p,q}}.
		\end{equation}
	\end{definition}
	The dual of the $L_{p,q}$- and $L_{p,q}^\top$-norms can be deduced in the same way as for the vector $p$-norms:
	\begin{lemma}[Dual of $L_{p,q}$-norms]
		\label{lmm:dual_L_p_q}
		Let $\mat{X}\in\reals^{n\times m}$. For $p, q\in[1,\infty]$,
		\begin{equation}
			\begin{split}
				&\|\mat{X}\|_{L_{p,q}}^* = \|\mat{X}\|_{L_{p^*,q^*}},\\
				&\|\mat{X}\|_{L_{p,q}^\top}^* = \|\mat{X}\|_{L_{p^*,q^*}^\top}.
			\end{split}
		\end{equation}
	\end{lemma}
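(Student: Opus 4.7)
The plan is to unfold the dual-norm definition \eqref{eq:matrix_norm_dual_def} and reduce the computation to two nested applications of the vector $p$-norm duality $\|\cdot\|_p^* = \|\cdot\|_{p^*}$. Writing $\mat{X} = (\v{x}_1 \;\cdots\; \v{x}_m)$ and $\mat{Y} = (\v{y}_1 \;\cdots\; \v{y}_m)$, I would use the identities $\trace(\mat{X}^\top \mat{Y}) = \sum_{j=1}^m \iprod{\v{x}_j}{\v{y}_j}_{\reals^n}$ and $\|\mat{X}\|_{L_{p,q}} = \|\v{u}\|_q$, where $u_j := \|\v{x}_j\|_p$, to recast the supremum in \eqref{eq:matrix_norm_dual_def} in these column-wise quantities.

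I would then split the supremum into two stages: first, with the vector of column norms $\v{u}\in\reals^m$ fixed, optimize over the direction of each column independently; second, optimize over $\v{u}\geq 0$ subject to $\|\v{u}\|_q \leq 1$. The inner stage gives $\sup_{\|\v{x}_j\|_p \leq u_j} \iprod{\v{x}_j}{\v{y}_j}_{\reals^n} = u_j\|\v{y}_j\|_{p^*}$ by vector $p$-norm duality. Setting $v_j := \|\v{y}_j\|_{p^*}$, the outer stage reduces to
\begin{equation*}
    \sup_{\|\v{u}\|_q \leq 1,\; \v{u}\geq 0} \iprod{\v{u}}{\v{v}}_{\reals^m}.
\end{equation*}
Since $\v{v}\geq 0$, the non-negativity constraint on $\v{u}$ is inactive (any optimizer of the unconstrained problem can be replaced by its entrywise absolute value without decreasing the objective or violating the $q$-norm bound), so a second appeal to vector $p$-norm duality yields $\|\v{v}\|_{q^*} = \|\mat{Y}\|_{L_{p^*,q^*}}$, establishing the first identity.

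For the second identity, I would substitute $\mat{W} = \mat{X}^\top$ in $\|\mat{Y}\|_{L_{p,q}^\top}^* = \sup_{\|\mat{X}^\top\|_{L_{p,q}} \leq 1} \trace(\mat{X}^\top \mat{Y})$. Combining the cyclic invariance of the trace with $\trace(\mat{A}) = \trace(\mat{A}^\top)$ rewrites the right-hand side as $\sup_{\|\mat{W}\|_{L_{p,q}} \leq 1} \trace(\mat{W}^\top \mat{Y}^\top) = \|\mat{Y}^\top\|_{L_{p,q}}^*$, which by the first identity equals $\|\mat{Y}^\top\|_{L_{p^*,q^*}} = \|\mat{Y}\|_{L_{p^*,q^*}^\top}$. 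No step is particularly difficult; the only mild obstacle lies in the boundary cases $p,q \in \{1,\infty\}$, where the attainers of Hölder's inequality must be chosen with a little care (via sign patterns for $p=1$, or argmax coordinates for $p=\infty$), but the two-stage decomposition above goes through verbatim.
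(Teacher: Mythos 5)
Your proposal is correct and takes essentially the same route as the paper's proof: unfold the dual-norm definition \eqref{eq:matrix_norm_dual_def}, decompose the trace column-wise, split the supremum into an inner stage over column directions and an outer stage over the vector of column norms, and apply vector $p$-norm duality twice (the paper disposes of the nonnegativity issue via the symmetry of the $q$-ball, exactly as you do). The transposed identity, which the paper dismisses as ``similar,'' is precisely the transposition argument you spell out.
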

	\begin{proof}
		For matrices $\mat{X}, \mat{Y} \in \reals^{n\times m}$, we denote their columns by $\v{x}_i$ and $\v{y}_i$, respectively. Then
		\begin{equation*}
			\|\mat{X}\|_{L_{p,q}}^*
			\overset{\eqref{eq:matrix_norm_dual_def}}{=} \max_{\|\mat{Y}\|_{L_{p,q}}\leq 1} \trace(\mat{X}^\top\mat{Y})
			=\max_{\|\v{r}\|_q\leq 1} \;\, \max_{\|\v{y}_i\|_p\leq |r_i|} \;\, \sum_{i=1}^m\v{x}_i^\top\v{y}_i.
		\end{equation*}
		Each summand depends on exactly one $\v{y}_i$, so the sum commutes with the maximum over $\v{y}_i$, which yields:
		\begin{align*}
			\|\mat{X}\|_{L_{p,q}}^*
			&=\max_{\|\v{r}\|_q\leq 1} \sum_{i=1}^m\max_{\|\v{y}_i\|_p\leq |r_i|}\v{x}_i^\top\v{y}_i && \text{(swap $\max$ and $\sum$)}\\
			&=\max_{\|\v{r}\|_q\leq 1} \sum_{i=1}^m|r_i|\max_{\|\v{z}_i\|_p\leq 1}\v{x}_i^\top\v{z}_i && \text{(with the substitution $|r_i|\v{z}_i= \v{y}_i$)}\\
			&=\max_{\|\v{r}\|_q\leq 1} \sum_{i=1}^m|r_i|\norm{\v{x}_i}_{p^*} && \text{(by duality, $\max_{\|\v{z}_i\|_p\leq 1}\v{x}_i^\top\v{z}_i=\norm{\v{x}_i}_{p^*}$)}\\
			&=\max_{\|\v{r}\|_q\leq 1} \sum_{i=1}^mr_i\|\v{x}_i\|_{p^*}&& \text{(by symmetry of $\|\v{r}\|_q\leq1$)}\\
			&=\max_{\|\v{r}\|_q\leq 1} \v{r}^\top\v{v}&& \text{(with $\v{v} := \left(\|\v{x}_1\|_{p^*},\cdots,\|\v{x}_m\|_{p^*}\right)^\top$)}\\
			&=\norm{\v{v}}_{q^*}&& \text{(by duality, $\max_{\|\v{r}\|_q\leq 1}\v{v}^\top\v{r}=\norm{\v{v}}_{q^*}$)}\\
			&=\|\left(\|\v{x}_1\|_{p^*},\cdots,\|\v{x}_m\|_{p^*}\right)^\top\|_{q^*}&&\\
			&=\|\mat{X}\|_{L_{p^*,q^*}}&&
		\end{align*}
		The proof of $\|\mat{X}\|_{L_{p,q}^\top}^* = \|\mat{X}\|_{L_{p^*,q^*}^\top}$ is similar.
	\end{proof}
	\subsection{Banach Spaces and the Banach Space Adjoint}
	We recall that a normed space $(\Vset, \norm_{\Vset})$ is a \emph{Banach space} if $\Vset$ is complete, i.e., every Cauchy sequence in $\Vset$ converges to some point in $\Vset$. For more information on Banach spaces, we refer to \cite[Chapter III]{Reed1981}. A linear operator $L : \Vset \rightarrow \Wset$ between Banach spaces is \emph{bounded} if $\norm{L} < \infty$, where $\norm{L}$ is the operator norm from \eqref{eq:basic_problem}. For more information on bounded operators, including a proof that they are continuous, we refer to \cite[Chapter 4.4]{Kubrusly2011}. Any linear operator between Banach spaces gives rise to an adjoint operator:
	\begin{definition}[Banach Space Adjoint]
		\label{def:adjoint}
		Let $L : \Vset \rightarrow \Wset$ be a bounded linear operator between Banach spaces. The \emph{Banach space adjoint} is the operator $L^* : \Wset^* \rightarrow \Vset^*$ defined for $f \in \Wset^*$ and $v\in \Vset$ through
		\begin{equation}
			L^*(f)(v) = f(L(v)).
		\end{equation}
	\end{definition}
	The Banach space adjoint can be thought of as a generalization of the Hilbert space adjoint, which corresponds to the transpose operator on $\reals^n$ or $\reals^{n\times m}$. For more information on adjoints, we refer to \cite[Chapter VI.2]{Reed1981}.
	\begin{lemma}
		\label{lmm:adjoint_properties}
		Let $L : \Vset \rightarrow \Wset$ be a bounded linear operator between Banach spaces. Then $L^* : \Wset^* \rightarrow \Vset^*$ is a bounded linear operator. Moreover, if $L$ is injective and has closed image, then $L^*$ is surjective. If $L$ is surjective, then $L^*$ is injective.
	\end{lemma}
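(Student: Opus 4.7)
The plan is to address linearity and boundedness of $L^*$ first, then the two duality implications separately, each obtained by a different technique.

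Linearity of $L^*$ is a direct unpacking of Definition~\ref{def:adjoint}: for $f,g \in \Wset^*$ and $c \in \reals$, evaluating $L^*(cf+g)$ at any $v\in\Vset$ produces $(cf+g)(L(v)) = cf(L(v)) + g(L(v))$, which is precisely $(cL^*(f) + L^*(g))(v)$. Boundedness is equally direct: for any $f \in \Wset^*$ and any $v \in \Vset$ with $\norm{v}_{\Vset} \leq 1$, we have $|L^*(f)(v)| = |f(L(v))| \leq \norm{f}_{\Wset^*} \norm{L(v)}_{\Wset} \leq \norm{f}_{\Wset^*} \norm{L}$, so $\norm{L^*} \leq \norm{L} < \infty$.

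For the implication that $L$ surjective implies $L^*$ injective, the argument I would give is essentially a single line: if $L^*(f) = 0$ for some $f \in \Wset^*$, then by definition $f(L(v)) = 0$ for every $v \in \Vset$, and surjectivity of $L$ means $\{L(v) : v \in \Vset\} = \Wset$, so $f$ vanishes on all of $\Wset$, hence $f = 0$.

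For the implication that $L$ injective implies $L^*$ surjective, the approach is constructive: given $g \in \Vset^*$, I would first define a linear functional $\tilde f$ on the range $L(\Vset) \subseteq \Wset$ by $\tilde f(L(v)) := g(v)$, which is well-defined precisely because $L$ is injective and linear because both $L$ and $g$ are. The goal is then to extend $\tilde f$ to a bounded linear functional $f$ on all of $\Wset$ via the Hahn--Banach theorem, and the resulting $f$ will satisfy $L^*(f)(v) = f(L(v)) = g(v)$ for every $v\in\Vset$ by construction. The main obstacle is that Hahn--Banach requires $\tilde f$ to be bounded on the subspace $L(\Vset)$, which amounts to showing that $L$ is bounded below on $\Vset$ (equivalently, that its range is closed). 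In the finite-dimensional setting in which the rest of the paper operates, an injective linear map between finite-dimensional normed spaces is automatically bounded below, so both the boundedness of $\tilde f$ and the Hahn--Banach extension follow with no further work.
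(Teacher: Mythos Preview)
Your treatment of linearity, boundedness, and the implication ``$L$ surjective $\Rightarrow$ $L^*$ injective'' matches the paper's proof essentially line for line.

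For ``$L$ injective $\Rightarrow$ $L^*$ surjective'' the approaches differ. The paper constructs a left inverse $M:\Wset\to\Vset$ directly, setting $M(w)=L^{-1}(w)$ on the range and $M(w)=0$ off it, then takes $f=g\circ M$. You instead define $\tilde f$ on $L(\Vset)$ and extend via Hahn--Banach. Your route is the more careful one: you correctly flag that boundedness of $\tilde f$ requires $L$ to be bounded below (equivalently, to have closed range), which is automatic in finite dimensions but not in general Banach spaces. The paper's construction has the same hidden issue---its $M$ is not actually linear off a complemented range, and $g\circ M$ need not be continuous without the closed-range hypothesis---but the paper does not acknowledge this. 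Since the lemma is only ever applied in finite dimensions (see the proof of Proposition~\ref{prop:constrained_norm_optimization}), both arguments ultimately suffice for the paper's purposes, but your version is more honest about where the general statement would break down.
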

	\begin{proof}
		The fact that $L^*$ is linear is obvious and, since $L$ is bounded, for any $f \in \Wset^*$
		\begin{equation*}
			\norm{L^*(f)}_{\Vset^*} = \sup_{\norm{v}_{\Vset}\leq 1} f(L(v)) \leq \sup_{\norm{v}_{\Vset}\leq 1} \norm{f}_{\Wset^*} \cdot \norm{L(v)}_{\Wset} = \norm{f}_{\Wset^*}\cdot \norm{L}.
		\end{equation*}
		Thus, according to \cite[Chapter 4.4]{Kubrusly2011}, $L^*$ is bounded.
		
		Assume now $L$ is injective and has closed image. Since $L$ is a bounded linear operator, according to the Closed Graph Theorem (i.e., \cite[Theorem 2.2.13]{Buehler2018}) it has closed graph in $\Vset \times \Wset$ with respect to the product topology. By assumption, $L$ additionally has closed image, thus we conclude using the Closed Image Theorem (i.e., \cite[Theorem 6.2.3 (ii)$\Rightarrow$(iv)]{Buehler2018}) that $\Image(L^*) = \ker(L)^{\perp}$, where $\ker(L)^{\perp}$ is the annihilator of $\ker(L)$, see \cite[Definition 2.3.21]{Buehler2018}. Since $L$ is injective, there holds $\ker(L) = 0$, which entails $\ker(L)^{\perp} = \Vset^*$ so that $\Image(L^*) = \Vset^*$, i.e., $L^*$ is surjective.
		
		Finally, assume $L$ is surjective, and suppose $f \in \Wset^*$ satisfies $L^*(f) = 0$. By the definition of $L^*$, this would mean $f(L(v)) = 0$ for every $v\in\Vset$. Since $L$ is surjective, this is equivalent to $f(w) = 0$ for every $w \in \Wset$, that is, $f = 0$. Thus, $\ker(L^*) = \{0\}$, which proves $L^*$ is injective.
	\end{proof}

	\section{Constrained Norm Optimization}
	\label{sec:push-forward_pull-back}
	We turn to the problem of computing expressions of the form
	\begin{equation}
		\inf_{L(x) = c}\norm{x},
	\end{equation}
	where $\norm$ is a norm, $L$ a linear map, and $c$ a fixed vector in some vector space. Specifically, the goal of this section will be to prove the following generalization of \cite[Eq. (5.12)]{Boyd2004}:
	\begin{proposition}[Constrained Matrix Norm Optimization]
		\label{prop:constrained_norm_optimization}
		Let $\norm$ be a matrix norm on $\reals^{l\times m}$, and let $\mat{M} \in \reals^{n\times l}$ be full rank with $n\leq l$. Moreover, let $\mat{C} \in \reals^{n\times m}$ be a fixed matrix. Then,
		\begin{equation}
			\label{eq:constrained_norm_optimization}
			\min_{\mat{M}\,\mat{X}=\mat{C}} \norm{\mat{X}} = \max_{\norm{\mat{M}^\top\,\mat{Y}}^* \leq 1} \trace(\mat{C}^\top \mat{Y}).
		\end{equation}
	\end{proposition}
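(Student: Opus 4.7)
The plan is to derive \eqref{eq:constrained_norm_optimization} as an instance of Lagrangian duality for a convex program with affine equality constraints. The primal objective $\mat{X} \mapsto \norm{\mat{X}}$ is convex and continuous, and the constraint $\mat{M}\mat{X} = \mat{C}$ is affine and feasible (since $\mat{M}$ has full row rank and is therefore surjective), so strong duality should apply, and the right-hand side of \eqref{eq:constrained_norm_optimization} should emerge as the resulting dual problem.

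First I would form the Lagrangian with multiplier $\mat{Y} \in \reals^{n \times m}$,
\begin{equation*}
	L(\mat{X}, \mat{Y}) = \norm{\mat{X}} + \trace(\mat{Y}^\top(\mat{C} - \mat{M}\mat{X})).
\end{equation*}
Using $\trace(\mat{Y}^\top\mat{M}\mat{X}) = \trace((\mat{M}^\top\mat{Y})^\top\mat{X})$ and $\trace(\mat{Y}^\top\mat{C}) = \trace(\mat{C}^\top\mat{Y})$, the Lagrange dual function reads
\begin{equation*}
	g(\mat{Y}) = \inf_{\mat{X} \in \reals^{l \times m}} L(\mat{X}, \mat{Y}) = \trace(\mat{C}^\top\mat{Y}) + \inf_{\mat{X}}\left[\norm{\mat{X}} - \trace((\mat{M}^\top\mat{Y})^\top\mat{X})\right].
\end{equation*}
By \eqref{eq:matrix_norm_dual_def}, $\trace((\mat{M}^\top\mat{Y})^\top\mat{X}) \leq \norm{\mat{M}^\top\mat{Y}}^* \cdot \norm{\mat{X}}$ for all $\mat{X}$, so the bracketed infimum equals $0$ when $\norm{\mat{M}^\top\mat{Y}}^* \leq 1$ (attained at $\mat{X} = \mat{0}$), and $-\infty$ otherwise (by taking $\mat{X}$ that nearly attains the supremum defining $\norm{\mat{M}^\top\mat{Y}}^*$ and rescaling it by a factor $t \to \infty$). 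Consequently, maximizing $g$ over $\mat{Y}$ produces exactly the right-hand side of \eqref{eq:constrained_norm_optimization}.

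The main obstacle is turning weak duality ($\min \geq \max$) into an equality, i.e., verifying a constraint qualification so that strong duality holds. Since the objective is a finite-valued, continuous convex function on all of $\reals^{l \times m}$ and the sole constraint is a feasible affine equality, the standard strong duality theorem for convex programs with affine constraints (e.g., \cite[Chapter 5.2.3]{boyd_convex_2004}) closes the gap. Attainment of the primal minimum follows from coercivity of $\norm{\cdot}$ on the closed feasible set, and the dual supremum is attained because $\mat{M}^\top$ is injective (being the transpose of a surjective map), making the feasible set $\{\mat{Y} : \norm{\mat{M}^\top\mat{Y}}^* \leq 1\}$ the preimage of the unit ball of $\norm{\cdot}^*$ under an injective linear map, which is compact in finite dimensions. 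This justifies writing $\min$ and $\max$ in \eqref{eq:constrained_norm_optimization} and completes the argument.
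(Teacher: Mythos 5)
Your proof is correct, but it takes a genuinely different route from the paper. You argue directly in $\reals^{l\times m}$ via Lagrangian duality: the dual function collapses to $\trace(\mat{C}^\top\mat{Y})$ on the set $\Sset{\mat{Y}}{\norm{\mat{M}^\top\mat{Y}}^*\leq 1}$ because the convex conjugate of a norm is the indicator of its dual unit ball, and strong duality holds since the only constraint is a feasible affine equality (feasibility coming from surjectivity of $\mat{M}$). Your attainment arguments are also sound: coercivity of the norm on the closed affine feasible set gives the primal $\min$, and injectivity of $\mat{M}^\top$ makes $\norm{\mat{M}^\top\cdot}^*$ a norm on $\reals^{n\times m}$, so the dual feasible set is compact and the $\max$ is attained. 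The paper instead derives the statement as a corollary of an abstract duality between push-forward and pull-back norms on reflexive Banach spaces (Theorem \ref{thm:push-forward_pull-back_dual_banach} and Corollary \ref{cor:constrained_norm_optimization_banach}), proved with the Hahn--Banach separation theorem and the machinery of adjoints, biduals, and evaluation maps, and then specializes to finite dimensions. The two arguments are two faces of the same convex duality --- your dual feasible set is precisely the unit ball of the pull-back norm $\norm{}_{\mat{M}^\top\downarrow}$ --- but yours is shorter and entirely self-contained for the finite-dimensional claim, whereas the paper's detour buys the general Banach-space statement, which it treats as a contribution in its own right and reuses elsewhere (e.g., in Example \ref{ex:dual_B_p_not_p_star}). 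As a purely standalone proof of Proposition \ref{prop:constrained_norm_optimization}, yours is perfectly adequate.
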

	We will formally prove Proposition \ref{prop:constrained_norm_optimization} at the end of Section \ref{sec:dual_pf_pb}. Note that \cite[Eq. (5.12)]{Boyd2004} can be seen as a special case of Proposition \ref{prop:constrained_norm_optimization} for $m=1$.
	In particular, Proposition \ref{prop:constrained_norm_optimization} for vector $p$-norms allows us to construct the dual formulation of the generalized matrix norm problem:
	\begin{corollary}
		\label{cor:matrix_norm_and_optimization}
		Let $\mat{A} \in \reals^{m\times n}$ and $\mat{B} \in \reals^{l\times n}$, assume $\mat{B}$ is injective (i.e., $\mat{B}$ is full rank with $n \leq l$), and let $\norm_{\Vset}$ be a norm on $\reals^{m}$ and $\norm_{\Wset}$ a norm on $\reals^l$. Furthermore, let $\v{c} \in \reals^n$. Then
		\begin{equation}
			\label{eq:matrix_norm_and_optimization}
			\max_{\norm{\mat{B}\v{x}}_{\Wset}\leq 1}\left(\norm{\mat{A}\v{x}}_{\Vset}+\v{c}^\top\v{x}\right) = \max_{\norm{\v{\alpha}}_{\Vset}^*\leq 1} \;\, \min_{\mat{B}^\top\v{\beta} = \mat{A}^\top\v{\alpha} + \v{c}} \;\, \nnorm{\v{\beta}}_{\Wset}^*.
		\end{equation}
		In particular, for $\v{c} = \v{0}$,
		\begin{equation}
			\label{eq:matrix_norm_and_optimization_c=0}
			\max_{\norm{\mat{B}\v{x}}_{\Wset}\leq 1}\norm{\mat{A}\v{x}}_{\Vset} = \max_{\norm{\v{\alpha}}_{\Vset}^*\leq 1} \;\, \min_{\mat{B}^\top\v{\beta} = \mat{A}^\top\v{\alpha}} \;\, \nnorm{\v{\beta}}_{\Wset}^*.
		\end{equation}
	\end{corollary}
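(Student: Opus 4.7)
The plan is to combine two dualities: the reflexive identity $\norm{\mat{A}\v{x}}_\Vset = \max_{\norm{\v{\alpha}}_\Vset^*\leq 1}\v{\alpha}^\top \mat{A}\v{x}$ on the finite-dimensional space $\reals^m$, and the vector specialization ($m=1$) of Proposition \ref{prop:constrained_norm_optimization}.

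First, I would absorb the linear term into a single inner product by writing
$$\norm{\mat{A}\v{x}}_\Vset + \v{c}^\top\v{x} = \max_{\norm{\v{\alpha}}_\Vset^*\leq 1}(\mat{A}^\top\v{\alpha}+\v{c})^\top\v{x}.$$
Substituting into the left-hand side of \eqref{eq:matrix_norm_and_optimization} and swapping the two suprema (always permissible for a supremum of suprema), I obtain
$$\max_{\norm{\mat{B}\v{x}}_\Wset\leq 1}\left(\norm{\mat{A}\v{x}}_\Vset+\v{c}^\top\v{x}\right) = \max_{\norm{\v{\alpha}}_\Vset^*\leq 1}\;\max_{\norm{\mat{B}\v{x}}_\Wset\leq 1}(\mat{A}^\top\v{\alpha}+\v{c})^\top\v{x}.$$

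Then I would apply Proposition \ref{prop:constrained_norm_optimization} to the inner maximum, performing the substitutions $\mat{M}\gets\mat{B}^\top$, $\norm \gets \norm_\Wset^*$, and taking $\mat{A}^\top\v{\alpha}+\v{c}$ as the target vector (the vector case corresponds to the statement of Proposition \ref{prop:constrained_norm_optimization} with one column). The hypotheses of the proposition are satisfied: $\mat{B}^\top\in\reals^{n\times l}$ has rank $n$ with $n\leq l$ because $\mat{B}$ is injective, and the constraint $\mat{B}^\top\v{\beta} = \mat{A}^\top\v{\alpha}+\v{c}$ is automatically feasible since $\mat{B}^\top : \reals^l\to\reals^n$ is then surjective. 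The proposition, together with the reflexivity identity $\norm_\Wset^{**}=\norm_\Wset$ on the finite-dimensional space $\reals^l$, yields
$$\max_{\norm{\mat{B}\v{x}}_\Wset\leq 1}(\mat{A}^\top\v{\alpha}+\v{c})^\top\v{x} = \min_{\mat{B}^\top\v{\beta}=\mat{A}^\top\v{\alpha}+\v{c}}\norm{\v{\beta}}_\Wset^*,$$
and plugging this into the outer maximum gives exactly \eqref{eq:matrix_norm_and_optimization}. The special case \eqref{eq:matrix_norm_and_optimization_c=0} then follows immediately by setting $\v{c}=\v{0}$.

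The main obstacle is not conceptual but one of bookkeeping: keeping track of which norm has been dualized at each stage, verifying that the hypotheses of Proposition \ref{prop:constrained_norm_optimization} still hold after transposing $\mat{B}$, and correctly invoking reflexivity to collapse $\norm_\Wset^{**}$ back to $\norm_\Wset$. Once these verifications are in place, the corollary is simply the composition of the two dualities.
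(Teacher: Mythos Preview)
Your proposal is correct and follows essentially the same approach as the paper: dualize $\norm{\mat{A}\v{x}}_{\Vset}$ via the identity $\norm{\mat{A}\v{x}}_{\Vset}=\max_{\norm{\v{\alpha}}_{\Vset}^*\leq 1}\v{\alpha}^\top\mat{A}\v{x}$, swap the two maxima, and then invoke Proposition~\ref{prop:constrained_norm_optimization} (vector case) on the inner problem using that $\mat{B}^\top$ is surjective. You are slightly more explicit than the paper in spelling out the reflexivity step $\norm_{\Wset}^{**}=\norm_{\Wset}$, but the argument is the same.
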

	\begin{proof}
		By duality,
		\begin{align*}
			\max_{\norm{\mat{B}\v{x}}_{\Wset}\leq 1}\left(\norm{\mat{A}\v{x}}_{\Vset}+\v{c}^\top\v{x}\right)
			&= \max_{\norm{\mat{B}\v{x}}_{\Wset}\leq 1} \;\, \max_{\norm{\v{\alpha}}_{\Vset}^*\leq 1} \;\, \left(\v{\alpha}^\top\mat{A}\v{x}+\v{c}^\top\v{x}\right)\\
			&= \max_{\norm{\v{\alpha}}_{\Vset}^* \leq 1} \;\, \max_{\norm{\mat{B}\v{x}}_{\Wset}\leq 1} \;\, \left(\v{c}^\top + \v{\alpha}^\top\mat{A}\right)\v{x}.
		\end{align*}
		If $\mat{B}$ is injective, $\mat{B}^\top$ is surjective, therefore Proposition \ref{prop:constrained_norm_optimization} yields \eqref{eq:matrix_norm_and_optimization}.
	\end{proof}
	\begin{remark}
		An important consequence of Corollary \ref{cor:matrix_norm_and_optimization} is that the generalized matrix norm can be related to the containment problem, which is the problem of checking whether a set $\widecheck{S}$ is contained in a set $\widehat{S}$. Specifically, if $\widecheck{S} = \Set{\v{c} + \mat{A}^\top \v{\alpha}}{\norm{\v{\alpha}}_{\Vset}^* \leq 1}$ and $\widehat{S} = \Set{\v{d} + \mat{B}^\top \v{\beta}}{\nnorm{\v{\beta}}_{\Wset}^* \leq 1}$,
		\begin{equation}
			r(\widecheck{S}, \widehat{S}) = \max_{\norm{\v{\alpha}}_{\Vset}^*\leq 1} \quad \min_{\mat{B}^\top\v{\beta} + \v{d} = \mat{A}^\top\v{\alpha} + \v{c}} \quad \nnorm{\v{\beta}}_{\Wset}^*
		\end{equation}
		is the smallest scalar such that $\widecheck{S} \subseteq r(\widecheck{S}, \widehat{S})\cdot(\widehat{S}-\v{d})+\v{d}$. In other words, $r(\widecheck{S}, \widehat{S})$ is the multiplicative factor by which $\widehat{S}$ can be shrinked with respect to its center, so it still contains $\widecheck{S}$. In particular, $\widecheck{S}\subseteq \widehat{S}$ if and only if $r(\widecheck{S}, \widehat{S}) \leq 1$. This is explored in more detail in our companion paper \cite{Kulmburg2024}.
	\end{remark}
	To prove Proposition \ref{prop:constrained_norm_optimization}, the concepts of push-forward and pull-back of seminorms will prove fruitful. We will first discuss the case where $\norm$ is the norm of a general (reflexive) Banach space before considering the special cases of matrix norms.
	\subsection{Push-Forward and Pull-Back Seminorms}
	The push-forward and pull-back of a seminorm can be defined in a manner similar to other branches of mathematics:
	\begin{definition}[Push-forward and Pull-back seminorms]
		\label{def:push-foward_pull-back}
		Let $(\Vset,\norm_\Vset)$ and $(\Wset,\norm_\Wset)$ be normed vector spaces, and $L: \Vset \rightarrow \Wset$ a linear operator. The \emph{push-forward} seminorm on $\Wset$ induced by $L$ is the extended seminorm
		\begin{equation}
			\label{eq:push-forward}
			\norm{w}_{L\uparrow\Vset} := \inf_{L(v) = w} \norm{v}_\Vset.
		\end{equation}
		We denote the unit ball of $\norm_{L\uparrow\Vset}$ as $\ball_{L\uparrow\Vset}$, and the space $\Wset$ endowed with that extended seminorm as $L\uparrow \Vset$.
		
		On the other hand, the \emph{pull-back} seminorm on $\Vset$ induced by $L$ is the seminorm
		\begin{equation}
			\label{eq:pull-back}
			\norm{v}_{L\downarrow\Wset} := \norm{L(v)}_\Wset.
		\end{equation}
		We denote the unit ball of $\norm_{L\downarrow\Wset}$ as $\ball_{L\downarrow\Wset}$, and the space $\Vset$ endowed with that seminorm as $L\downarrow \Wset$.
	\end{definition}
	\begin{example}[Ellipsotopes]
		\label{rmk:ellipsotopes}
		Let $\Vset,\Wset$ be vector spaces with a surjective linear operator $L: \Vset\rightarrow \Wset$ and a norm $\norm_\Vset$ on $\Vset$. Then the unit ball $\ball_{L\uparrow\Vset}$ of the push-forward can be formulated as
		\begin{equation}
			\ball_{L\uparrow\Vset} = \Set{L(v)}{\norm{v}_\Vset\leq 1} = L(\ball_{\Vset}).
		\end{equation}
		In other words, $\ball_{L\uparrow\Vset}$ is the image of the unit ball of $\norm_{\Vset}$ under the linear map $L$. If we add a vector $c$ to this set, i.e., if we consider the affine image $c+ L(\ball_{\Vset})$, we can construct many familiar set representations. For example, zonotopes are just the affine image of a hypercube (i.e., the unit ball of the $\infty$-norm). In contrast, ellipsoids can be represented as the affine image of an Euclidean ball (i.e., the unit ball of the Euclidean norm). Any symmetric polytope can be seen as the affine image of a cross-polytope (i.e., the unit ball of the $1$-norm). More generally, if the underlying norm is a vector $p$-norm, such sets are called \emph{ellipsotopes} in \cite{Kousik2022}.
	\end{example}
	We now show that both the push-forward and the pull-back are (extended) seminorms:
	\begin{lemma}
		\label{lmm:pf_pb_are_norms}
		For normed vector spaces $(\Vset, \norm_\Vset)$ and $(\Wset, \norm_\Wset)$ and a linear map $L:\Vset \rightarrow \Wset$, the push-forward is an extended seminorm, and the pull-back is a seminorm.
		Moreover, if $L$ is continuous, then the infimum in \eqref{eq:push-forward} can be replaced by a minimum if \eqref{eq:push-forward} is bounded. In this case, the push-forward $\norm_{L\uparrow\Vset}$ is a norm if and only if $L$ is surjective. The pull-back $\norm_{L\downarrow\Wset}$ is a norm if and only if $L$ is injective.
	\end{lemma}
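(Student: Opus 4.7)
\emph{Pull-back.} The pull-back case is entirely mechanical, and I would dispense with it first. Since $L(0) = 0$ and $\norm_\Wset$ is a norm, $\norm{0}_{L\downarrow\Wset} = \norm{L(0)}_\Wset = 0$; absolute homogeneity follows from $\norm{cv}_{L\downarrow\Wset} = \norm{cL(v)}_\Wset = |c|\norm{L(v)}_\Wset$; and the triangle inequality transfers from $\Wset$ via $L(v+w) = L(v)+L(w)$. Thus $\norm_{L\downarrow\Wset}$ is always a seminorm. For the norm characterization, $\norm{v}_{L\downarrow\Wset} = 0$ reduces to $L(v) = 0$, so the seminorm separates points exactly when $\ker L = \{0\}$, i.e., when $L$ is injective.

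\emph{Push-forward: extended seminorm axioms.} For the push-forward, I would verify the three extended seminorm axioms directly from the infimum. Since $L(0) = 0$, one has $\norm{0}_{L\uparrow\Vset} \leq \norm{0}_\Vset = 0$. For absolute homogeneity with $c \neq 0$, the substitution $v = cu$ sets up a bijection between $\{v : L(v) = cw\}$ and $\{u : L(u) = w\}$ that rescales the $\Vset$-norm by $|c|$, giving $\norm{cw}_{L\uparrow\Vset} = |c|\norm{w}_{L\uparrow\Vset}$. For the triangle inequality at $w_1,w_2$ with finite push-forward norms, the plan is to fix $\epsilon > 0$, pick $v_i$ with $L(v_i) = w_i$ and $\norm{v_i}_\Vset \leq \norm{w_i}_{L\uparrow\Vset} + \epsilon$, use $L(v_1 + v_2) = w_1 + w_2$ together with the triangle inequality in $\Vset$, and let $\epsilon \to 0$. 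The case where one of the $w_i$ has infinite push-forward norm is handled by the standard conventions on $[0,\infty]$.

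\emph{From infimum to minimum, and the norm characterization.} Now assume $L$ is continuous. For $w$ with $\norm{w}_{L\uparrow\Vset} < \infty$, I would take a minimizing sequence $v_n \in L^{-1}(\{w\})$ and note that it is bounded; in the finite-dimensional setting that governs the rest of the paper, Bolzano--Weierstrass supplies a subsequential limit $v^\star$, and continuity of $L$ and of $\norm_\Vset$ then give $L(v^\star) = w$ with $\norm{v^\star}_\Vset = \norm{w}_{L\uparrow\Vset}$, so the infimum is attained. This compactness step is the main technical obstacle, since in a general infinite-dimensional normed space the infimum need only be attained under extra hypotheses such as reflexivity; I would pin down the paper's intended scope before committing to a single formulation. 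For the norm characterization, if $L$ is not surjective there exists $w \notin L(\Vset)$ with $\norm{w}_{L\uparrow\Vset} = +\infty$, which precludes a finite-valued norm. Conversely, if $L$ is surjective and continuous, then $\norm{w}_{L\uparrow\Vset} = 0$ furnishes a sequence $v_n$ with $L(v_n) = w$ and $\norm{v_n}_\Vset \to 0$; continuity forces $w = L(v_n) \to L(0) = 0$, which gives positive definiteness and completes the characterization.
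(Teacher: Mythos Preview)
Your approach matches the paper's almost verbatim for the seminorm axioms (epsilon near-minimizers versus the paper's minimizing sequences is a cosmetic difference) and for the injectivity/surjectivity characterizations. The one substantive divergence is the attainment of the infimum: the paper argues only that $L^{-1}(\{w\})$ is closed and then asserts that a minimizing sequence ``converges inside'' it, which is precisely the gap you flag --- closedness alone does not force convergence of a bounded sequence, and your Bolzano--Weierstrass argument, with the explicit caveat about needing finite dimensions or reflexivity, is the honest repair in the paper's intended setting. Your positive-definiteness argument for the push-forward (take $v_n$ with $L(v_n)=w$, $\norm{v_n}_\Vset\to 0$, and use continuity of $L$ directly) is also slightly cleaner than the paper's, which first invokes the attained minimizer; your version sidesteps the attainment issue entirely for that step.
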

	\begin{proof}
		Both $\norm_{L\uparrow\Vset}$ and $\norm_{L\downarrow\Wset}$ can easily be seen to be absolutely homogeneous. Using the linearity of $L$ and the triangle inequality for $\norm_\Wset$, the triangle inequality for the pull-back follows. For the push-forward, we need more refined arguments: If for $w_1, w_2 \in \Wset$ either $\norm{w_1}_{L\uparrow\Vset} = \infty$ or $\norm{w_2}_{L\uparrow\Vset} = \infty$, or both, then the triangle inequality trivially holds. Therefore, assume without loss of generality that $\norm{w_1}_{L\uparrow\Vset} \neq \infty \neq \norm{w_2}_{L\uparrow\Vset}$. Let $(v_{1,i})_{i\in\naturals}$ and $(v_{2,i})_{i\in\naturals}$ be sequences satisfying $L(v_{1,i}) = w_1$ and $L(v_{2,i}) = w_2$ for all $i\in\naturals$, and such that $\norm{v_{1,i}}_\Vset \rightarrow \norm{w_1}_{L\uparrow\Vset}$ and $\norm{v_{2,i}}_\Vset \rightarrow \norm{w_2}_{L\uparrow\Vset}$. Clearly, $L(v_{1,i}+v_{2,i}) = w_1+w_2$ for all $i\in\naturals$ by linearity of $L$.
		As a consequence, for any $i\in\naturals$,
		\begin{equation*}
			\norm{w_1+w_2}_{L\uparrow\Vset} = \inf_{L(v) = w_1 + w_2} \norm{v}_\Vset \quad
			\leq \norm{v_{1,i}+v_{2,i}}_\Vset 
			\leq \norm{v_{1,i}}_{\Vset}+\norm{v_{2,i}}_\Vset.
		\end{equation*}
		Taking the limit $i\rightarrow \infty$ yields the triangle inequality $\norm{w_1+w_2}_{L\uparrow\Vset} \leq \norm{w_1}_{L\uparrow\Vset}+\norm{w_2}_{L\uparrow\Vset}$ for the push-forward.
		\medskip
		
		If $L$ is continuous, for any $w$, the set $\Set{v\in \Vset}{L(v) = w}$ is closed, since $\{w\}$ is closed. Consequently, any sequence of elements $v_i\in \Set{v\in \Vset}{L(v) = w}$ whose norm would converge to the infimum for $i\rightarrow \infty$ converges inside $\Set{v\in \Vset}{L(v) = w}$, and this limit point is then the minimizer. If there are no solutions such that $L(v) = w$, we take the infimum over an empty set, which is equal to $\infty$ by convention. Since $\infty$ can be a possible value for an extended seminorm, even in that case the minimum is reached.
		
		We now examine under which conditions the push-forward and the pull-back are norms, beginning with the latter: since the pull-back is a seminorm, it is a norm if and only if it is positive definite, i.e., $\norm{v}_{L\downarrow\Wset} = 0 \Rightarrow v = 0$. Since $\norm{v}_{L\downarrow\Wset} = \norm{L(v)}_{\Wset}$, the inequality $\norm{v}_{L\downarrow\Wset} \neq 0$ for $v\neq 0$ occurs if and only if $\text{Ker}(L)=\{0\}$, which is equivalent to $L$ being injective. For the push-forward, note that $\norm_{L\uparrow\Vset}$ is a bounded seminorm (i.e., it does not output $\infty$) if and only if $L(v) = w$ has at least one solution $v$ for any $w$, which is equivalent to $L$ being surjective. Moreover, in the case where $L$ is surjective and continuous, suppose $w\in \Wset$ satisfies $\norm{w}_{L\uparrow\Vset} = 0$. As discussed above, the continuity of $L$ implies that the infimum in \eqref{eq:push-forward} is reached, so there exists $v \in \Vset$ with $L(v) = w$ and $\norm{v}_\Vset = 0$. Since $\norm_\Vset$ is a norm, it is positive definite, hence $v = 0$ must hold, thus $w = L(v) = L(0) = 0$ since $L$ is linear, which proves that $\norm_{L\uparrow\Vset}$ is also positive definite.
	\end{proof}
	
	\subsection{The Dual of the Push-Forward and Pull-Back}
	\label{sec:dual_pf_pb}
	A surprising and novel property of the push-forward and pull-back norms is that they are dual to each other: 
	\begin{theorem}[Duality of the Push-forward and Pull-back]
		\label{thm:push-forward_pull-back_dual_banach}
		Let $(\Vset, \norm_{\Vset})$ and $(\Wset, \norm_{\Wset})$ be Banach spaces, and $L : \Vset \rightarrow \Wset$ a bounded, surjective linear operator with adjoint $L^*$. Then, for any $g\in\Wset^*$,
		\begin{equation}
			\label{eq:push-forward_pull-back_dual_banach}
			\norm{g}_{L\uparrow\Vset}^* = \norm{g}_{L^*\downarrow\Vset^*}.
		\end{equation}
		
		On the other hand, if $\Vset$ and $\Wset$ are reflexive and $M : \Wset^* \rightarrow \Vset^*$ is a bounded, injective linear operator with closed image and adjoint $M^*$, then for any $\psi \in \Wset^{**}$,
		\begin{equation}
			\label{eq:push-forward_pull-back_dual_banach_part2}
			\norm{\psi}_{M\downarrow\Vset^*}^* = \norm{\psi}_{M^*\uparrow\Vset^{**}}.
		\end{equation}
	\end{theorem}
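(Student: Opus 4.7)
The plan is to prove the two identities separately, as they rely on somewhat different techniques.

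For \eqref{eq:push-forward_pull-back_dual_banach}, I will unwind the definitions directly. By definition of the dual norm, $\nnorm{g}_{L\uparrow\Vset}^* = \sup_{\nnorm{w}_{L\uparrow\Vset} \leq 1} g(w)$. Using the surjectivity and continuity of $L$ together with Lemma \ref{lmm:pf_pb_are_norms}, every such $w$ admits a preimage $v \in \Vset$ with $L(v) = w$ and $\nnorm{v}_\Vset = \nnorm{w}_{L\uparrow\Vset}$, so $g(w) = g(L(v)) = L^*(g)(v) \leq \nnorm{L^*(g)}_{\Vset^*}\cdot \nnorm{v}_\Vset$. Taking the supremum yields $\nnorm{g}_{L\uparrow\Vset}^* \leq \nnorm{L^*(g)}_{\Vset^*} = \nnorm{g}_{L^*\downarrow\Vset^*}$. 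The reverse inequality follows symmetrically: for $\epsilon > 0$, pick $v \in \ball_\Vset$ with $L^*(g)(v) \geq \nnorm{L^*(g)}_{\Vset^*} - \epsilon$ and set $w := L(v)$, which satisfies $\nnorm{w}_{L\uparrow\Vset} \leq \nnorm{v}_\Vset \leq 1$ and $g(w) = L^*(g)(v)$, completing the argument after letting $\epsilon \to 0$.

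For \eqref{eq:push-forward_pull-back_dual_banach_part2}, the approach combines the Hahn-Banach theorem with reflexivity of $\Vset$. First, using the identity $M^*(\iota_\Vset(v))(f) = \iota_\Vset(v)(M(f)) = M(f)(v)$ together with the fact that $\iota_\Vset$ is a bijective isometry by reflexivity, the right-hand side rewrites as $\nnorm{\psi}_{M^*\uparrow\Vset^{**}} = \inf\Set{\nnorm{v}_\Vset}{v\in\Vset,\ M(f)(v) = \psi(f)\ \forall f \in \Wset^*}$, where the infimum is attained by Lemma \ref{lmm:pf_pb_are_norms} since $M^*$ is continuous and surjective (Lemma \ref{lmm:adjoint_properties}). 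The inequality $\nnorm{\psi}_{M\downarrow\Vset^*}^* \leq \nnorm{\psi}_{M^*\uparrow\Vset^{**}}$ is then immediate from $\psi(f) = M(f)(v) \leq \nnorm{M(f)}_{\Vset^*}\cdot \nnorm{v}_\Vset$ applied to any admissible $v$.

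For the reverse inequality, set $\alpha := \nnorm{\psi}_{M\downarrow\Vset^*}^*$ and define the linear functional $\sigma : \mathrm{range}(M) \rightarrow \reals$ by $\sigma(M(f)) := \psi(f)$, which is well-defined by injectivity of $M$. By the definition of $\alpha$ and the symmetry $\psi(-f) = -\psi(f)$, one obtains $|\sigma(M(f))| \leq \alpha \nnorm{M(f)}_{\Vset^*}$, so $\sigma$ has operator norm at most $\alpha$ on the subspace $\mathrm{range}(M) \subseteq \Vset^*$. The Hahn-Banach theorem extends $\sigma$ to $\tilde\sigma \in \Vset^{**}$ with $\nnorm{\tilde\sigma}_{\Vset^{**}} \leq \alpha$, and reflexivity of $\Vset$ realizes $\tilde\sigma = \iota_\Vset(v)$ for some $v \in \Vset$ with $\nnorm{v}_\Vset \leq \alpha$; this $v$ satisfies $M(f)(v) = \tilde\sigma(M(f)) = \psi(f)$ for every $f \in \Wset^*$, yielding the desired bound. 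The main obstacle is precisely this second half: the Hahn-Banach extension on the (possibly non-closed) range of $M$ is what converts the scalar bound $\psi(f) \leq \alpha \nnorm{M(f)}_{\Vset^*}$ into a concrete preimage in $\Vset^{**}$, and reflexivity of $\Vset$ is essential for pulling this preimage back to an element of $\Vset$ itself.
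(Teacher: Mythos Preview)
Your proof is correct. For \eqref{eq:push-forward_pull-back_dual_banach} your argument coincides with the paper's: both reduce $\nnorm{g}_{L\uparrow\Vset}^*$ to $\sup_{\nnorm{v}_\Vset\leq 1} g(L(v))$ via surjectivity and the definition of the adjoint.

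For \eqref{eq:push-forward_pull-back_dual_banach_part2} you take a genuinely different and more economical route. The paper establishes the inequality $\nnorm{\psi}_{M^*\uparrow\Vset^{**}}\leq\nnorm{\psi}_{M\downarrow\Vset^*}^*$ by a Hahn--Banach \emph{separation} argument: it builds two disjoint convex sets in $\Vset^{**}\times\reals$, separates them by a hyperplane, and then unwinds the resulting inequality through the convex conjugate of $\nnorm{\cdot}_{\Vset}$ and reflexivity. For the reverse inequality it uses a weak-duality (swap of $\sup$ and $\inf$) computation. By contrast, you use the Hahn--Banach \emph{extension} theorem directly: defining $\sigma$ on $\mathrm{range}(M)$ and extending it norm-preservingly to all of $\Vset^*$ produces the required preimage in one step, and your ``easy'' inequality is a two-line direct estimate rather than a Lagrangian relaxation. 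Your approach is the classical subspace/quotient duality argument and is shorter; the paper's separation argument is more in the spirit of convex analysis and perhaps generalizes more readily to settings where $\sigma$ is not linear, but here that extra machinery is not needed.
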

	\begin{proof}
		Suppose $L : \Vset \rightarrow \Wset$ is a bounded surjective linear operator. For any $g\in\Wset^*$,
		\begin{equation*}
			\norm{g}_{L\uparrow\Vset}^*					
			=\sup_{\substack{w\in\Wset\\w\neq0}}\frac{g(w)}{\|w\|_{L\uparrow\Vset}} 									
			=\sup_{\substack{w\in\Wset\\w\neq0}} \;\, \sup_{L(v)=w}\frac{g(w)}{\norm{v}_{\Vset}} 							
			=\sup_{\substack{w\in\Wset\\L(v)=w\\w\neq0}}\frac{g(L(v))}{\norm{v}_{\Vset}} .						
		\end{equation*}
		Since $L$ is surjective, the supremum can be taken over all $v\in \Vset$ with $L(v)\neq 0$, since any $w\in\Wset\backslash\{0\}$ is the image of some $v\in\Vset$ under $L$. Therefore,
		\begin{equation}
			\label{eq:detour}
			\norm{g}_{L\uparrow\Vset}^* = \sup_{\substack{v\in\Vset\\L(v)\neq0}}\frac{g(L(v))}{\norm{v}_{\Vset}} = \sup_{\substack{v\in\Vset\\v\neq0}}\frac{g(L(v))}{\norm{v}_{\Vset}}.
		\end{equation}
		To prove the second equality in \eqref{eq:detour}, note that if $g = 0$ then \eqref{eq:detour} trivially holds. If $g \neq 0$, since $L$ is surjective, there must exist a $v\in\Vset$ with $v\neq 0$ such that $g(L(v)) > 0$, which can happen if and only if $L(v) \neq 0$.
		We conclude
		\begin{equation*}
			\norm{g}_{L\uparrow\Vset}^*
			=\sup_{\norm{v}_{\Vset}\leq 1}g(L(v))
			=\sup_{\norm{v}_{\Vset}\leq 1}(L^*(g))(v)																															
			=\norm{L^*(g)}_{\Vset}^*
			=\norm{g}_{L^*\downarrow\Vset^*},
		\end{equation*}
		where for the second equality we used the definition of the adjoint (see Definition \ref{def:adjoint}).
		
		Suppose now $M : \Wset^* \rightarrow \Vset^*$ is a bounded, injective linear operator with closed image, so that $M^*$ is surjective by Lemma \ref{lmm:adjoint_properties}, and hence $\norm{\psi}_{M^*\uparrow\Vset^{**}} < \infty$ for any $\psi \in \Wset^{**}$ according to Lemma \ref{lmm:pf_pb_are_norms}. From now on, let $\psi \in \Wset^{**}$ be arbitrary but fixed and denote $\theta := \norm{\psi}_{M^*\uparrow\Vset^{**}}$. We will show $\norm{\psi}_{M\downarrow\Vset^*}^* \leq \norm{\psi}_{M^*\uparrow\Vset^{**}} = \theta$ using the Hahn-Banach separation theorem, similarly to \cite[Chapter 5.3]{Boyd2004}. To that end, for some small $\varepsilon$ such that $0 < \varepsilon < \theta$ we define
		\begin{align*}
			S_{\varepsilon} &:= \Set{(0, z)\in \Wset^{**} \times \reals}{0 \leq z \leq \theta - \varepsilon},\\
			T &:= \Set{(\phi, z) \in \Wset^{**} \times \reals}{\exists \varphi \in \Vset^{**}, M^*(\varphi) = \psi + \phi \text{ and } \norm{\varphi}_{\Vset^{**}}\leq z}.
		\end{align*}
		Note that $S_{\varepsilon} = \{0\} \times [0, \theta - \varepsilon]$, so $S_{\varepsilon}$ is convex and compact. On the other hand, since $M^*$ is bounded (and thus continuous) by Lemma \ref{lmm:adjoint_properties}, we can use Lemma \ref{lmm:pf_pb_are_norms} to show
		\begin{equation*}
			T = \Set{(\phi, z) \in \Wset^{**} \times [0,\infty)}{\norm{\psi + \phi}_{M^*\uparrow\Vset^{**}} \leq z}.
		\end{equation*}
		Since $M^*$ is surjective, $\norm_{M^*\uparrow\Vset^{**}}$ is a norm according to Lemma \ref{lmm:pf_pb_are_norms}, so in particular it is a continuous and convex function. Therefore, $T$ is closed and convex. Moreover, $T$ and $S_{\varepsilon}$ are clearly disjoint: For $(\phi, z) \in T$ to be in $S_{\varepsilon}$, there would need to hold $\phi = 0$ and $z \leq \theta - \varepsilon$, so that by definition of $T$ there must exist a $\varphi\in\Vset^{**}$ that satisfies $M^*(\varphi) = \psi$ and $\norm{\varphi}_{\Vset}^{**}\leq z$. But by definition of the push-forward $M^*\uparrow \Vset^{**}$, this means $\theta = \norm{\psi}_{M^*\uparrow \Vset^{**}} \leq z$, implying
		\begin{equation*}
			z \leq \theta - \varepsilon \leq z - \varepsilon,
		\end{equation*}
		which is a contradiction since $\varepsilon > 0$. Finally, $\Wset^{**} \times \reals$ is locally convex because $\Wset^{**}$ is a Banach space (see \cite[Theorem 5.5.2]{Narici2010}), so we can apply the Hahn-Banach separation theorem (i.e., \cite[Theorem 7.8.6(a)]{Narici2010}) on $S_{\varepsilon}, T \subset \Wset^{**} \times \reals$ to find a hyperplane strictly separating $S_{\varepsilon}$ and $T$. This means there exist $G \in (\Wset^{**})^*$ and $c \in \reals$ such that
		\begin{align}
			&\sup_{(\phi,z) \in S_{\varepsilon}} G(\phi) + cz < \inf_{(\phi, z) \in T} G(\phi) + cz \notag\\
			\Leftrightarrow \quad &\sup_{0 \leq z \leq \theta - \varepsilon}c z < \inf_{(\phi, z) \in T} G(\phi) + cz.\label{eq:proof_lmm_duality_hahn_banach}
		\end{align}
		We now show $c>0$: If $c$ were negative, then the left-hand side of \eqref{eq:proof_lmm_duality_hahn_banach} would be finite, but the right-hand side would be $-\infty$, since by definition of $T$ we can choose $z$ arbitrarily large and $\phi = 0$. If $c$ were zero, \eqref{eq:proof_lmm_duality_hahn_banach} would imply $0 < \inf_{(\phi, z) \in T} G(\phi)$. However, this can not hold for $(\phi, z) = (0, \theta) \in T$, so there must hold $c>0$. Consequently, we can divide both sides of \eqref{eq:proof_lmm_duality_hahn_banach} by $c$, to obtain
		\begin{equation}
			\label{eq:proof_conclusion_hahn-banach}
			\theta - \varepsilon < \inf_{(\phi, z) \in T} \frac{1}{c}G(\phi) + z.
		\end{equation}
		Let $\varphi \in \Vset^{**}$ be arbitrary, and choose $\phi = M^*(\varphi) - \psi$ and $z = \norm{\varphi}_{\Vset^{**}}$. Clearly, $(\phi, z) \in T$, so that \eqref{eq:proof_conclusion_hahn-banach} implies
		\begin{equation}
			\label{eq:proof_conclusion_hahn-banach2}
			\theta - \varepsilon < \frac{1}{c}G(M^*(\varphi) - \psi) + \norm{\varphi}_{\Vset^{**}}.
		\end{equation}
		Since $\Wset$ is reflexive, so is $\Wset^*$ (see \cite[Theorem 2.4.4]{Buehler2018}), thus any $\frac{1}{c}G \in (\Wset^{**})^* \cong (\Wset^*)^{**}$ can be written as $\frac{1}{c}G = \iota_{\Wset^*}(g)$ for some $g\in\Wset^*$, where $\iota_{\Wset^*}$ is the evaluation map on $\Wset^*$ (see \cite[Chapter 2.4]{Buehler2018}), so we can rewrite \eqref{eq:proof_conclusion_hahn-banach2} as
		\begin{equation*}
			\theta - \varepsilon < M^*(\varphi)(g) - \psi(g) + \norm{\varphi}_{\Vset^{**}}.
		\end{equation*}
		The $\varphi\in\Vset^{**}$ was arbitrary, so we can take the infimum over all $\varphi$ and obtain
		\begin{align*}
			&\theta - \varepsilon < -\psi(g) + \inf_{\varphi\in \Vset^{**}} \left(M^*(\varphi)(g) + \norm{\varphi}_{\Vset^{**}}\right)\\
			\Leftrightarrow \quad &\theta - \varepsilon < -\psi(g) + \inf_{\varphi\in \Vset^{**}} \left(\varphi(M(g)) + \norm{\varphi}_{\Vset^{**}}\right)\\
			\Leftrightarrow \quad &\theta - \varepsilon < -\psi(g) - \sup_{\varphi'\in \Vset^{**}} \left(\varphi'(M(g)) - \norm{\varphi'}_{\Vset^{**}}\right),
		\end{align*}
		where we used the definition of the adjoint $M^*$ as well as the variable transformation $\varphi' := -\varphi$, together with the symmetry of $\norm_{\Vset^{**}}$. We use the reflexivity of $\Vset$ to find a $v \in \Vset$ such that $\varphi' = \iota_{\Vset}(v)$, where $\iota_{\Vset}$ is the evaluation map on $\Vset$. This $v$ satisfies $\norm{v}_{\Vset} = \norm{\varphi'}_{\Vset^{**}}$, so that we may write
		\begin{equation}
			\label{eq:proof_conclusion_almost_conv_conj}
			\theta - \varepsilon < -\psi(g) - \sup_{v\in \Vset} \left(M(g)(v) - \norm{v}_{\Vset}\right).
		\end{equation}
		Note that the supremum corresponds to the convex conjugate of $\norm_{\Vset}$ (see Appendix \ref{sec:convex_conjugate}), so by Lemma \ref{lmm:convex_conjugate_norm} there must hold $\norm{M(g)}_{\Vset^*} \leq 1$ (otherwise the right-hand side of \eqref{eq:proof_conclusion_almost_conv_conj} is $-\infty$, a contradiction), in which case $\theta - \varepsilon < -\psi(g)$. Taking the supremum over all such $g\in\Wset^*$ yields
		\begin{equation*}
			\theta - \varepsilon < \sup_{\norm{M(g)}_{\Vset^*} \leq 1} - \psi(g) \overset{g':=-g}{=} \sup_{\norm{M(g')}_{\Vset^*} \leq 1} \psi(g') = \norm{\psi}_{M\downarrow\Vset^*}^*.
		\end{equation*}
		Since this holds for any $0 < \varepsilon < \theta$, we can take $\varepsilon \rightarrow 0$ to obtain $\norm{\psi}_{M^*\uparrow\Vset^{**}} = \theta \leq \norm{\psi}_{M\downarrow\Vset^*}^*$. For the converse inequality, we can use \cite[Theorem 1., p. 217]{Luenberger1969} to rewrite the pull-back:
		\begin{equation*}
			\norm{\psi}_{M\downarrow\Vset^*}^* = \sup_{\norm{M(g)}_{\Vset^*} \leq 1} \psi(g)
			= \sup_{g\in\Wset^*} \;\, \inf_{\lambda \geq 0} \left(\psi(g) + \lambda(1-\norm{M(g)}_{\Vset^*})\right).
		\end{equation*}
		For any function $\nu : \Xset \times \Yset \rightarrow \reals$ over any two sets $\Xset$ and $\Yset$ there always holds
		\begin{equation*}
			\sup_{x\in\Xset} \inf_{y\in\Yset} \nu(x,y) \leq \inf_{y\in\Yset} \sup_{x\in\Xset} \nu(x,y).
		\end{equation*}
		Therefore, using the definition of the dual norm,
		\begin{align*}
			\norm{\psi}_{M\downarrow\Vset^*}^* &= \sup_{g\in\Wset^*} \;\, \inf_{\lambda \geq 0} \;\, \inf_{\norm{v}_{\Vset}\leq 1} \left(\psi(g) + \lambda(1-M(g)(v))\right)\\
			&\leq \inf_{\lambda \geq 0} \;\, \inf_{\norm{v}_{\Vset}\leq 1} \big(\lambda + \sup_{g\in\Wset^*} \left(\psi - \lambda\iota_{\Vset}(v)\circ M\right)(g)\big),
		\end{align*}
		where $\iota_{\Vset}$ is again the evaluation map on $\Vset$.
		Clearly,
		\begin{equation*}
			\sup_{g\in\Wset^*} \left(\psi - \lambda\iota_{\Vset}(v)\circ M\right)(g) = \begin{cases}
				0, &\text{ if } \psi - \lambda\iota_{\Vset}(v)\circ M = 0,\\
				\infty, &\text{ otherwise}.
			\end{cases}
		\end{equation*}
		We conclude
		\begin{equation*}
			\norm{\psi}_{M\downarrow\Vset^*}^* \leq \inf_{\substack{\lambda \geq 0\\\norm{v}_{\Vset}\leq 1 \\\lambda\iota_{\Vset}(v)\circ M = \psi}} \lambda \overset{v':=\lambda v}{=} \inf_{M^*(\iota_{\Vset}(v')) = \psi} \norm{v'}_{\Vset}.
		\end{equation*}
		Since $\Vset$ is reflexive, $\iota_{\Vset}$ is an isometry, so that $\norm{\iota_{\Vset}(v')}_{\Vset^{**}} = \norm{v'}_{\Vset}$. If we define $\varphi := \iota_{\Vset}(v')$ we thus get
		\begin{equation*}
			\norm{\psi}_{M\downarrow\Vset^*}^* \leq \inf_{M^*(\varphi) = \psi} \norm{\varphi}_{\Vset^{**}},
		\end{equation*}
		which proves $\norm{\psi}_{M\downarrow\Vset^*}^* \leq \norm{\psi}_{M^*\uparrow\Vset^{**}}$.
	\end{proof}
	An immediate consequence is the following Corollary:
	\begin{corollary}[Constrained Norm Optimization]
		\label{cor:constrained_norm_optimization_banach}
		Let $(\Vset, \norm_{\Vset})$ and\\$(\Wset, \norm_{\Wset})$ be reflexive Banach spaces, $L : \Vset \rightarrow \Wset$ a bounded, surjective linear operator, and $c\in \Wset$ a fixed vector. Then,
		\begin{equation}
			\label{eq:constrained_norm_optimization_banach}
			\min_{L(x)=c} \norm{x}_{\Vset} = \sup_{\norm{L^*(g)}_{\Vset}^* \leq 1} g(c).
		\end{equation}
	\end{corollary}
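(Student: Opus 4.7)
The plan is to recognize both sides of \eqref{eq:constrained_norm_optimization_banach} as natural expressions in terms of push-forward and pull-back norms from Definition \ref{def:push-foward_pull-back}, and then invoke the duality established in Theorem \ref{thm:push-forward_pull-back_dual_banach} together with the standard Hahn--Banach representation of a vector's norm as a supremum over the dual unit ball.

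First I would rewrite the left-hand side. By Definition \ref{def:push-foward_pull-back}, $\min_{L(x)=c}\norm{x}_{\Vset}$ is precisely the push-forward norm $\norm{c}_{L\uparrow\Vset}$. Since $L$ is bounded (hence continuous) and surjective, Lemma \ref{lmm:pf_pb_are_norms} guarantees both that the infimum is attained, justifying the use of $\min$, and that $\norm_{L\uparrow\Vset}$ is an honest norm on $\Wset$.

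Next I would reinterpret the constraint on the right-hand side. For $g \in \Wset^*$, the definition of the pull-back gives $\norm{L^*(g)}_{\Vset}^* = \norm{g}_{L^*\downarrow\Vset^*}$. The first identity of Theorem \ref{thm:push-forward_pull-back_dual_banach} then yields $\norm{g}_{L^*\downarrow\Vset^*} = \norm{g}_{L\uparrow\Vset}^*$, so the right-hand side of \eqref{eq:constrained_norm_optimization_banach} becomes $\sup_{\norm{g}_{L\uparrow\Vset}^* \leq 1} g(c)$. This is exactly the canonical Hahn--Banach expression for the norm of $c \in \Wset$ with respect to $\norm_{L\uparrow\Vset}$: one direction is the trivial bound $g(c) \leq \norm{g}_{L\uparrow\Vset}^* \cdot \norm{c}_{L\uparrow\Vset}$, while the reverse inequality comes from the existence of a norming functional provided by Hahn--Banach. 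Thus the supremum equals $\norm{c}_{L\uparrow\Vset}$, and combining the two steps gives \eqref{eq:constrained_norm_optimization_banach}.

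The main obstacle is essentially bookkeeping: the Hahn--Banach step only requires $(\Wset, \norm_{L\uparrow\Vset})$ to be a normed space, which Lemma \ref{lmm:pf_pb_are_norms} supplies (completeness of this derived norm is not needed). I should note that this route uses only the first half of Theorem \ref{thm:push-forward_pull-back_dual_banach} and does not visibly require reflexivity of $\Vset$; if that hypothesis is truly needed it must enter through a subtlety I am missing, or the author may prefer a more symmetric argument that invokes the second half of Theorem \ref{thm:push-forward_pull-back_dual_banach} applied to $L^*$ — which is injective by Lemma \ref{lmm:adjoint_properties}, since $L$ is surjective — and identifies $c$ with its image in $\Wset^{**}$ under the evaluation map.
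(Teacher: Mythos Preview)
Your argument is correct and takes a genuinely different route from the paper. The paper instead applies the \emph{second} identity \eqref{eq:push-forward_pull-back_dual_banach_part2} of Theorem~\ref{thm:push-forward_pull-back_dual_banach} with $M := L^*$ (injective by Lemma~\ref{lmm:adjoint_properties}) and $\psi := \iota_{\Wset}(c)$, obtaining $\norm{\iota_{\Wset}(c)}_{L^{**}\uparrow\Vset^{**}} = \norm{\iota_{\Wset}(c)}^*_{L^*\downarrow\Vset^*}$; it then uses reflexivity of $\Vset$ to identify $\Vset^{**}$ with $\Vset$ isometrically and a Hahn--Banach argument to rewrite $L^{**}(\iota_{\Vset}(v)) = \iota_{\Wset}(c)$ as $L(v)=c$. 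This is precisely the ``more symmetric'' route you anticipated in your last paragraph. Your approach via the first identity \eqref{eq:push-forward_pull-back_dual_banach} is shorter and shows that the equality $\inf_{L(x)=c}\norm{x}_{\Vset} = \sup_{\norm{L^*(g)}_{\Vset}^*\leq 1} g(c)$ already holds without reflexivity; reflexivity then only serves to upgrade the infimum to a minimum. One point you should make explicit: the norming functional Hahn--Banach supplies is a priori only continuous for $\norm_{L\uparrow\Vset}$, and you need it to lie in $\Wset^*=(\Wset,\norm_{\Wset})^*$. This follows because the open mapping theorem (applicable since $\Vset,\Wset$ are Banach and $L$ is bounded surjective) gives $\norm{\cdot}_{L\uparrow\Vset}\leq C\norm{\cdot}_{\Wset}$, so the two norms on $\Wset$ are equivalent and share the same continuous dual.
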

	\begin{proof}
		Since $L$ is surjective, it trivially has a closed image, thus according to the Closed Image Theorem (i.e., \cite[Theorem 6.2.3 (ii)$\Rightarrow$(vi)]{Buehler2018}) the operator $M:= L^*$ has closed image, and is injective according to Lemma \ref{lmm:adjoint_properties}.
		We can thus use \eqref{eq:push-forward_pull-back_dual_banach_part2} for $M$ and $\psi := \iota_{\Wset}(c)$, where $\iota_{\Wset}$ is the evaluation map on $\Wset$:
		\begin{equation*}
			\min_{L^{**}(\varphi) = \iota_{\Wset}(c)} \norm{\varphi}_{\Vset}^{**} = \norm{\iota_{\Wset}(c)}_{L^{**}\uparrow \Vset^{**}} = \norm{\iota_{\Wset}(c)}^*_{L^*\downarrow \Vset^*} = \sup_{\norm{L^*(g)}_{\Vset^*}\leq 1} g(c).
		\end{equation*}
		Since $\Vset$ is reflexive, $\iota_{\Vset}$ is an isometric isomorphism (see \cite[Chapter 2.4.]{Buehler2018}). This implies
		\begin{equation}
			\label{eq:cno_part1}
			\min_{L^{**}(\iota_{\Vset}(v)) = \iota_{\Wset}(c)} \norm{v} = \min_{L^{**}(\iota_{\Vset}(v)) = \iota_{\Wset}(c)} \norm{\iota_{\Vset}(v)}_{\Vset}^{**} = \min_{L^{**}(\varphi) = \iota_{\Wset}(c)} \norm{\varphi}_{\Vset}^{**},
		\end{equation}
		where the last equality follows from the bijectivity of $\iota_{\Vset}$.
		According to the Hahn-Banach theorem (see \cite[Corollary 2.3.23.]{Buehler2018}), $w_1 = w_2$ for $w_1,w_2\in\Wset$ holds if and only if $g(w_1) = g(w_2)$ for all $g \in \Wset^*$. Therefore, using the definition of the Banach space adjoint and the evaluation map,
		\begin{equation}
			\label{eq:cno_part2}
			\begin{split}
				& \quad L^{**}(\iota_{\Vset}(v)) = \iota_{\Wset}(c),\\
				\Leftrightarrow & \quad \forall g \in \Wset^*, (L^{**}(\iota_{\Vset}(v)))(g) = (\iota_{\Wset}(c))(g),\\
				\Leftrightarrow & \quad \forall g \in \Wset^*, g(L(v)) = g(c),\\
				\Leftrightarrow & \quad L(v) = c.
			\end{split}
		\end{equation}
		Combining \eqref{eq:cno_part1} with \eqref{eq:cno_part2} yields \eqref{eq:constrained_norm_optimization_banach}.
	\end{proof}
	\begin{example}[The Dual of $\norm{\mat{B}\v{v}}_p$ is not a $p^*$-norm]
		\label{ex:dual_B_p_not_p_star}
		Suppose $\mat{B}$ is injective, so that $\norm{\mat{B}\v{v}}_p$ is a norm for $p\in[1,\infty]$. Then Corollary \ref{cor:constrained_norm_optimization_banach} implies that the dual of $\norm{\mat{B}\v{v}}_p$ is the norm
		\begin{equation*}
			\v{w} \mapsto \min_{\mat{B}^\top\v{\alpha} = \v{w}}\norm{\v{\alpha}}_{p^*}.
		\end{equation*}
		For $p=1$, this corresponds to the zonotope-norm introduced in \cite[Definition 4.]{Kulmburg2021}. According to \cite[Corollary 1.]{Kulmburg2021}, its unit circle is a zonotope (a special type of ellipsotope, see Example \ref{rmk:ellipsotopes}) which is, in general, different from a hypercube (which would be the unit ball of the $\infty$-norm). This shows that the dual of $\norm{\mat{B}\v{v}}_{1}$ and the $\infty$-norm are different, and thus that the dual of $\norm{\mat{B}\v{v}}_p$ can not be formulated in terms of the $p^*$-norm alone.
	\end{example}
	We can now prove Proposition \ref{prop:constrained_norm_optimization}, as it is an easy consequence of Corollary \ref{cor:constrained_norm_optimization_banach}:
	\begin{proof}[Proof of Proposition \ref{prop:constrained_norm_optimization}]
		Since $\mat{M}$ is full rank and $n\leq l$, $\mat{M}$ is surjective. Finite-dimensional vector spaces are always reflexive (see \cite[Example 2.4.5.]{Buehler2018}), so \eqref{eq:constrained_norm_optimization} directly follows from \eqref{eq:constrained_norm_optimization_banach} by using the definition of the transpose (also called the Hilbert space adjoint, see \cite{Reed1981}) and \eqref{eq:matrix_norm_dual_def}.
	\end{proof}
	
	\section{Tractable Matrix Norms}
	\label{sec:tractable}
	
	We now explore for which values of $p$ and $q$ the expression $\norm{\mat{A}}_{p\mapsto q; \mat{B}}$ for $\mat{A}\in\reals^{m\times n}, \mat{B}\in\reals^{l\times n}$ can be computed in polynomial time (with respect to $l$, $m$, and $n$).
	Before starting our analysis, we mention that we may, without loss of generality, assume that $\mat{B}$ is injective. Otherwise, $\text{Ker}(\mat{B}) \neq \{\v{0}\}$, and thus one of two things can happen:
	\begin{itemize}
		\item If $\text{Ker}(\mat{B}) \not\subseteq \text{Ker}(\mat{A})$, there exists a vector $\v{v} \in \reals^{n}$ such that $\mat{B}\v{v} = \v{0}$, but $\mat{A}\v{v} \neq \v{0}$. Hence, for any $\lambda > 0$, we have $\norm{\mat{B}\lambda \v{v}}_p = 0\leq 1$, but $\norm{\mat{A}\lambda \v{v}}_q = \lambda \norm{\mat{A}\v{v}}_q = \lambda c$ for some fixed constant $c > 0$. Letting $\lambda \rightarrow \infty$ shows
		\begin{equation*}
			\norm{\mat{A}}_{p\mapsto q; \mat{B}} = \infty,
		\end{equation*}
		i.e., the problem is unbounded.
		\item If $\text{Ker}(\mat{B}) \subseteq \text{Ker}(\mat{A})$, let $k = \text{Rank}(\mat{B})$. Then the singular value decomposition of $\mat{B}$ has the form
		\begin{equation*}
			\mat{B} = \mat{U}\,\mat{\Sigma}\,\mat{V}^\top, \text{ with } \mat{\Sigma} = \begin{bmatrix}\mat{D} & \mat{0}_{k \times (n-k)}\\\mat{0}_{(l-k)\times k} & \mat{0}_{(l-k)\times (n-k)}\end{bmatrix},
		\end{equation*}
		where $\mat{D} \in \reals^{k\times k}$ is diagonal and invertible, and $\mat{U}$ and $\mat{V}$ are orthogonal matrices. Thus, we have $\norm{\mat{A}}_{p\mapsto q; \mat{B}} = \norm{\mat{A}\,\mat{V}}_{p\mapsto q; \mat{U}\,\mat{\Sigma}}$. Since $\text{Ker}(\mat{B}) \subseteq \text{Ker}(\mat{A})$, we have $\text{Ker}(\mat{U}\,\mat{\Sigma}) \subseteq \text{Ker}(\mat{A}\,\mat{V})$, and thus
		\begin{equation*}
			\norm{\mat{A}\,\mat{V}}_{p\mapsto q; \mat{U}\,\mat{\Sigma}} = \norm{\mat{A}\,\mat{V}}_{p\mapsto q; \mat{B}'}, \text{ with } \mat{B}' = \mat{U}\begin{bmatrix}
				\mat{D}\\\mat{0}_{(l-k)\times k}.
			\end{bmatrix}
		\end{equation*}
		where $\mat{B}'$ is now injective.
	\end{itemize}
	We conclude that if $\mat{B}$ is not injective, either the problem is unbounded, or we can reduce the problem to the case where $\mat{B}$ is injective.
	
	\subsection{The Case \texorpdfstring{$p\in[1,\infty]$}{p ∈ [1,oo]}, \texorpdfstring{$q=\infty$}{q = oo}}
	In this case,
	\begin{equation*}
		\norm{\mat{A}}_{p\mapsto \infty;\mat{B}} = \max_i \;\, \max_{\norm{\mat{B}\v{v}}_p \leq 1} \;\, \v{e}_i^\top\mat{A}\v{v}.
	\end{equation*}
	This can be evaluated as the maximum of $m$ convex optimization problems (one for each $i = 1,...,m$), since $\v{e}_i^\top \mat{A}\v{v}$ is linear and the constraint $\norm{\mat{B}\v{v}}_p \leq 1$ is convex. Nevertheless, the nonlinear constraint may lead to minor approximation errors when using, e.g., interior point methods (see \cite[Chapter 11]{Boyd2004}) to solve the convex optimization problem. Even though this error can be made arbitrarily small, we propose another formulation that only has linear constraints, and can thus be solved directly using the methods from \cite[Chapter 10]{Boyd2004}:
	\begin{theorem}
		Let $\mat{A} \in \reals^{m\times n}$ and $\mat{B}\in \reals^{l\times n}$. Then, for any $p\in[1,\infty]$,
		\begin{equation}
			\label{eq:final_q=infty}
			\max_{\norm{\mat{B}\v{v}}_p\leq 1}\norm{\mat{A}\v{v}}_{\infty} = \max_{i} \;\, \min_{\mat{B}^\top\v{\beta} = \mat{A}^\top\v{e}_i} \;\, \nnorm{\v{\beta}}_{p^*},
		\end{equation}
		which can be evaluated in polynomial time with respect to $l$, $m$, and $n$.
	\end{theorem}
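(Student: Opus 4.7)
The plan is to reduce the left-hand side to $m$ linearly constrained dual problems, one for each coordinate of the $\infty$-norm, and then apply Proposition \ref{prop:constrained_norm_optimization} to each of them.

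First I would write $\norm{\mat{A}\v{v}}_\infty = \max_i |\v{e}_i^\top \mat{A}\v{v}|$. Since the feasible set $\{\v{v} \in \reals^n : \norm{\mat{B}\v{v}}_p \leq 1\}$ is symmetric under $\v{v} \mapsto -\v{v}$, the absolute value may be dropped once the outer supremum is taken, and commuting the two maxima yields
\[ \max_{\norm{\mat{B}\v{v}}_p \leq 1}\norm{\mat{A}\v{v}}_\infty = \max_i \max_{\norm{\mat{B}\v{v}}_p \leq 1} \v{e}_i^\top \mat{A}\v{v}. \]

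For each fixed $i$ I would then invoke Proposition \ref{prop:constrained_norm_optimization} with $\mat{M} := \mat{B}^\top$, $\mat{X} := \v{\beta}$, $\mat{C} := \mat{A}^\top \v{e}_i$, and the $p^*$-norm on $\reals^l$ (so that $m=1$ in the notation of the proposition). The rank hypothesis is satisfied because, by the reduction at the start of Section \ref{sec:tractable}, one may assume $\mat{B}$ is injective, so $\mat{B}^\top$ has full row rank $n \leq l$. Since the dual of the $p^*$-norm is the $p$-norm, the proposition gives
\[ \min_{\mat{B}^\top \v{\beta} = \mat{A}^\top \v{e}_i} \norm{\v{\beta}}_{p^*} = \max_{\norm{\mat{B}\v{v}}_p \leq 1} \v{e}_i^\top \mat{A}\v{v}, \]
and substituting back yields \eqref{eq:final_q=infty}.

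For the complexity claim, the right-hand side of \eqref{eq:final_q=infty} is the maximum of $m$ convex programs, each minimizing a $p^*$-norm of an $l$-vector subject to linear equality constraints. For $p \in \{1,\infty\}$ this is a linear program, for $p = 2$ a second-order cone program, and for the remaining $p \in (1,\infty)$ a convex program admitting a standard self-concordant barrier, all solvable in time polynomial in $l$, $n$, and the required precision via interior-point methods. Taking the maximum of the $m$ resulting values gives the overall polynomial dependence on $l$, $m$, and $n$. I expect the only delicate step to lie here: making the polynomial-time bound uniform over all $p \in [1,\infty]$ is best handled by invoking a convex-programming result as a black box rather than by an explicit iteration-count analysis.
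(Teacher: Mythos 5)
Your proof is correct, and it reaches the same endpoint as the paper's — reducing \eqref{eq:final_q=infty} to $m$ equality-constrained $p^*$-norm minimizations via Proposition \ref{prop:constrained_norm_optimization} — but by a slightly different route. The paper first dualizes the whole problem using Corollary \ref{cor:matrix_norm_and_optimization}, obtaining $\max_{\norm{\v{\alpha}}_1\leq 1}\min_{\mat{B}^\top\v{\beta}=\mat{A}^\top\v{\alpha}}\nnorm{\v{\beta}}_{p^*}$, and then observes that the inner minimum is a norm (hence convex) in $\v{\alpha}$, so the Bauer maximum principle restricts the outer maximum to the extreme points $\pm\v{e}_i$ of the $1$-ball. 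You instead decompose the $\infty$-norm into coordinates on the primal side, use the symmetry of the feasible set to drop the absolute values, and only then dualize each coordinate problem separately. The two orderings are equivalent (the extreme points of $\ball_1$ are exactly the $\pm\v{e}_i$), but yours is marginally more elementary in that it needs neither the Bauer maximum principle nor the observation that the inner minimum defines a norm. One small point: the theorem is stated for arbitrary $\mat{B}$, and your appeal to the injectivity reduction at the start of Section \ref{sec:tractable} covers this less directly than the paper's device of adopting the convention $\min_{x\in\emptyset}f(x)=\infty$, under which \eqref{eq:final_q=infty} holds verbatim even for non-injective $\mat{B}$ (both sides equal $\infty$ when $\text{Ker}(\mat{B})\not\subseteq\text{Ker}(\mat{A})$); this is a presentational rather than a substantive gap. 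Your treatment of the complexity claim matches the paper's reliance on interior-point methods for the equality-constrained convex subproblems.
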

	\begin{proof}
		By Corollary \ref{cor:matrix_norm_and_optimization}, if $\mat{B}$ is injective,
		\begin{equation}
			\label{eq:q=infty_reformulation_first}
			\norm{\mat{A}}_{p,\infty;\mat{B}} = \max_{\norm{\v{\alpha}}_1\leq 1} \;\, \min_{\mat{B}^\top\v{\beta} = \mat{A}^\top\v{\alpha}} \;\, \nnorm{\v{\beta}}_{p^*},
		\end{equation}
		and the term
		\begin{equation}
			\label{eq:reformulation_case_q=infty}
			\min_{\mat{B}^\top\v{\beta} = \mat{A}^\top\v{\alpha}}\nnorm{\v{\beta}}_{p^*}
		\end{equation}
		is a norm with respect to $\v{\alpha}$, which means in particular that it is convex. By the Bauer maximum principle, the maximum in \eqref{eq:q=infty_reformulation_first} over $\v{\alpha}$ is attained at $\v{\alpha} = \pm \v{e}_i$, for some $i=1,...,m$. Furthermore, the term \eqref{eq:reformulation_case_q=infty} can be evaluated in polynomial time using, e.g., interior point methods. Consequently, it suffices to compute \eqref{eq:reformulation_case_q=infty} for the $2m$ values $\v{\alpha} = \pm \v{e}_i$ (in fact, $\v{\alpha} = \v{e}_i$ is sufficient, as \eqref{eq:reformulation_case_q=infty} is a norm, and is thus symmetric), which can be done in polynomial time with respect to $l$, $m$, and $n$. Moreover, for a function $f: D \rightarrow \reals$, we use the convention $\min_{x \in D} f(x) = \infty$ if $D = \emptyset$. This implies that \eqref{eq:final_q=infty} still holds even when $\mat{B}$ is not injective.
	\end{proof}
	\subsection{The Case \texorpdfstring{$p = q = 2$}{p = q = 2}}
	In general, when $p=2$, the generalized matrix norm problem can be simplified:
	\begin{lemma}
		\label{lmm:p=2}
		Let $\mat{A} \in \reals^{m\times n}$ and $\mat{B}\in \reals^{l\times n}$. Then, if $\mat{B}$ is injective, or more generally if $\text{Ker}(\mat{B}) \subseteq \text{Ker}(\mat{A})$, for any $q\in[1,\infty]$ 
		\begin{equation}
			\label{eq:p=2}
			\norm{\mat{A}}_{2\mapsto q;\mat{B}} = \norm{\mat{A}\,\mat{B}^+}_{2\mapsto q},
		\end{equation}
		where $\mat{B}^+$ denotes the Moore-Penrose pseudoinverse of $\mat{B}$.
	\end{lemma}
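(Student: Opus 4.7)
The plan is to reduce the problem to a standard $2 \mapsto q$ matrix norm by a change of variables $\v{w} = \mat{B}\v{v}$, exploiting the defining properties of the Moore-Penrose pseudoinverse. Recall that $\mat{B}^+\mat{B}$ is the orthogonal projector onto $\text{Ker}(\mat{B})^\perp$, and $\mat{B}\mat{B}^+$ is the orthogonal projector onto $\text{Range}(\mat{B})$. In particular, $\text{Ker}(\mat{B}^+) = \text{Range}(\mat{B})^\perp$.

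First I would decompose any $\v{v}\in\reals^n$ as $\v{v} = \mat{B}^+\mat{B}\v{v} + \v{v}_{\perp}$, where $\v{v}_{\perp} \in \text{Ker}(\mat{B})$. Under the hypothesis $\text{Ker}(\mat{B}) \subseteq \text{Ker}(\mat{A})$, we have $\mat{A}\v{v}_{\perp} = \v{0}$, so
\begin{equation*}
	\mat{A}\v{v} = \mat{A}\,\mat{B}^+\mat{B}\v{v}.
\end{equation*}
Writing $\v{w} := \mat{B}\v{v}$, the constraint $\norm{\mat{B}\v{v}}_2 \leq 1$ becomes $\norm{\v{w}}_2 \leq 1$, and the image of the feasible set under the map $\v{v}\mapsto \mat{B}\v{v}$ is precisely the set of $\v{w}\in \text{Range}(\mat{B})$ with $\norm{\v{w}}_2 \leq 1$. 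Thus
\begin{equation*}
	\norm{\mat{A}}_{2\mapsto q;\mat{B}} = \max_{\substack{\v{w}\in\text{Range}(\mat{B})\\ \norm{\v{w}}_2\leq 1}}\norm{\mat{A}\,\mat{B}^+\v{w}}_q.
\end{equation*}

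The remaining step is to argue that the restriction $\v{w}\in\text{Range}(\mat{B})$ can be dropped without changing the value. For this I would take an arbitrary $\v{w}\in\reals^l$ with $\norm{\v{w}}_2\leq 1$ and decompose it orthogonally as $\v{w} = \v{w}_{\parallel}+\v{w}_{\perp}$ with $\v{w}_{\parallel}\in\text{Range}(\mat{B})$ and $\v{w}_{\perp}\in\text{Range}(\mat{B})^\perp = \text{Ker}(\mat{B}^+)$. Then $\mat{A}\,\mat{B}^+\v{w} = \mat{A}\,\mat{B}^+\v{w}_{\parallel}$, and by the Pythagorean theorem $\norm{\v{w}_{\parallel}}_2 \leq \norm{\v{w}}_2 \leq 1$. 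Hence enlarging the feasible set from $\{\v{w}\in\text{Range}(\mat{B}):\norm{\v{w}}_2\leq 1\}$ to the full Euclidean unit ball does not increase the maximum, and clearly does not decrease it either, so both maxima coincide, yielding \eqref{eq:p=2}.

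No step should pose a real obstacle: the only subtle point is keeping track of why $\v{v}_{\perp}\in\text{Ker}(\mat{B})$ contributes nothing to $\mat{A}\v{v}$ (which is exactly the content of the hypothesis $\text{Ker}(\mat{B})\subseteq\text{Ker}(\mat{A})$), and why extending the feasible set in $\v{w}$ is harmless (which follows from $\text{Ker}(\mat{B}^+) = \text{Range}(\mat{B})^\perp$). The injectivity of $\mat{B}$ is a special case where $\text{Ker}(\mat{B}) = \{\v{0}\} \subseteq \text{Ker}(\mat{A})$ trivially, so the more general statement covers both regimes.
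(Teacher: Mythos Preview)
Your proof is correct and actually covers the general hypothesis $\text{Ker}(\mat{B}) \subseteq \text{Ker}(\mat{A})$ directly, whereas the paper only spells out the injective case and leaves the general one as ``similar.'' The route, however, is genuinely different. The paper proceeds by taking a singular value decomposition $\mat{B} = \mat{U}\,\mat{\Sigma}\,\mat{V}^\top$, exploits the orthogonal invariance of the $2$-norm to substitute $\v{y} = \mat{V}^\top\v{x}$, then (using injectivity) writes $\mat{\Sigma} = \begin{pmatrix}\mat{D}\\\mat{0}\end{pmatrix}$ with $\mat{D}$ diagonal invertible, substitutes $\v{z} = \mat{D}\v{y}$, pads $\v{z}$ to a vector in $\reals^l$, rotates back by $\mat{U}$, and finally identifies the resulting matrix $\mat{V}\begin{pmatrix}\mat{D}^{-1}&\mat{0}\end{pmatrix}\mat{U}^\top$ as $\mat{B}^+$ via a cited characterization of the pseudoinverse. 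Your argument bypasses the SVD entirely and works directly with the projector identities $\mat{B}^+\mat{B}$ (onto $\text{Ker}(\mat{B})^\perp$) and $\mat{B}\,\mat{B}^+$ (onto $\text{Range}(\mat{B})$), together with $\text{Ker}(\mat{B}^+)=\text{Range}(\mat{B})^\perp$; the Pythagorean step replaces the paper's padding-and-rotation step. Your approach is shorter, coordinate-free, and makes transparent exactly where the kernel-inclusion hypothesis is consumed; the paper's SVD computation has the minor advantage of exhibiting $\mat{B}^+$ explicitly but is otherwise less economical.
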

	\begin{proof}
		We will only cover the case where $\mat{B}$ is injective. The general case is similar. Let $\mat{B} = \mat{U}\,\mat{\Sigma}\,\mat{V}^{\top}$ be a singular value decomposition of $\mat{B}$. Since the 2-norm is invariant under orthogonal transformations,
		\begin{equation*}
			\norm{\mat{A}}_{2\mapsto q;\mat{B}} = \max_{\norm{\mat{B}\v{x}}_2\leq1}\norm{\mat{A}\v{x}}_q
			= \max_{\norm{\mat{\Sigma}\v{y}}_2\leq1}\norm{\mat{A}\,\mat{V}\v{y}}_q,
		\end{equation*}
		where we used the variable transformation $\v{y} = \mat{V}^\top\v{x}$. Since $\mat{B}$ is injective, $\mat{\Sigma}$ has the form
		\begin{equation*}
			\mat{\Sigma}_{\mat{B}} = \begin{pmatrix} \mat{D}\\\mat{0}_{(l-n)\times n}\end{pmatrix}
		\end{equation*}
		for some diagonal, invertible matrix $\mat{D} \in \reals^{n\times n}$. The constraint $\nnorm{\mat{\Sigma}\v{y}}_2\leq 1$ simplifies to $\norm{\mat{D}\v{y}}_2\leq 1$, and using the variable transformation $\v{z} = \mat{D}\v{y}$,
		\begin{equation*}
			\norm{\mat{A}}_{2\mapsto q;\mat{B}} = \max_{\norm{\v{z}}_2\leq 1}\norm{\mat{A}\,\mat{V}\,\mat{D}^{-1}\v{z}}_q
			=\max_{\norm{\v{w}}_2\leq 1}\norm{\mat{A}\,\mat{V}\begin{pmatrix} \mat{D}^{-1} & \mat{0}_{n\times (l-n)}\end{pmatrix}\mat{U}^\top\v{w}}_{q},
		\end{equation*}
		where we have extended the vector $\v{z}\in\reals^{n}$ to a vector $\v{z}\,'\in\reals^{l}$, and used the variable transformation $\v{w} = \mat{U}\v{z}\,'$. According to \cite[p. 207, Corollary 1]{BenIsrael2003}, $\mat{B}^+$ coincides with $\mat{V}\begin{pmatrix} \mat{D}^{-1} & \mat{0}_{n\times (l-n)}\end{pmatrix}\mat{U}^\top$, which yields \eqref{eq:p=2}.
	\end{proof}
	We immediately conclude the following result for $p=2$ and $q=2$:
	\begin{theorem}
		Let $\mat{A} \in \reals^{m\times n}$ and $\mat{B}\in \reals^{l\times n}$. If $\mat{B}$ is injective, or more generally if $\text{Ker}(\mat{B}) \subseteq \text{Ker}(\mat{A})$,
		\begin{equation}
			\label{eq:thm_p_2_q_2}
			\max_{\norm{\mat{B}\v{v}}_2\leq 1}\norm{\mat{A}\v{v}}_2 = \norm{\mat{A}\,\mat{B}^+}_{2\mapsto 2},
		\end{equation}
		which can be evaluated in polynomial time with respect to $l$, $m$, and $n$.
		Instead, if $\text{Ker}(\mat{B}) \not\subseteq \text{Ker}(\mat{A})$,
		\begin{equation}
			\label{eq:thm_p_2_q_2_unbounded}
			\max_{\norm{\mat{B}\v{v}}_2\leq 1}\norm{\mat{A}\v{v}}_2 = \infty.
		\end{equation}
	\end{theorem}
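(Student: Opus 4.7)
The plan is to derive both cases of the theorem as essentially direct consequences of results already established in the paper, with only the polynomial-time computability of $\norm{\mat{A}\,\mat{B}^+}_{2\mapsto 2}$ requiring an additional (but standard) observation.

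For the first case, where $\text{Ker}(\mat{B}) \subseteq \text{Ker}(\mat{A})$, I would simply invoke Lemma \ref{lmm:p=2} specialized to $q = 2$, which immediately yields \eqref{eq:thm_p_2_q_2}. For polynomial-time computability, I would argue that $\norm{\mat{A}\,\mat{B}^+}_{2\mapsto 2}$ is the largest singular value of $\mat{A}\,\mat{B}^+$ (see, e.g., \cite[Chapter 5.6]{horn2012}), so that it suffices to compute the Moore–Penrose pseudoinverse of $\mat{B}$ and a singular value decomposition of $\mat{A}\,\mat{B}^+$. Both of these are standard linear-algebra subroutines that run in polynomial time in $l$, $m$, and $n$.

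For the second case, where $\text{Ker}(\mat{B}) \not\subseteq \text{Ker}(\mat{A})$, the unboundedness argument was already given at the start of Section \ref{sec:tractable}: pick $\v{v}\in\reals^n$ with $\mat{B}\v{v} = \v{0}$ and $\mat{A}\v{v} \neq \v{0}$, and scale $\v{v}$ by arbitrary $\lambda > 0$. Then $\norm{\mat{B}\lambda\v{v}}_2 = 0 \leq 1$ while $\norm{\mat{A}\lambda\v{v}}_2 = \lambda\norm{\mat{A}\v{v}}_2 \to \infty$ as $\lambda\to\infty$, yielding \eqref{eq:thm_p_2_q_2_unbounded}.

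There is no real obstacle here; the theorem is mostly a restatement of Lemma \ref{lmm:p=2} combined with the introductory observation on $\text{Ker}(\mat{B}) \not\subseteq \text{Ker}(\mat{A})$. The only non-trivial point worth making explicit is that the classical $2\mapsto 2$ operator norm reduces to a singular value computation, which places it squarely in $\Poly$.
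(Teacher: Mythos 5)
Your proposal is correct and follows essentially the same route as the paper: \eqref{eq:thm_p_2_q_2} via Lemma \ref{lmm:p=2}, \eqref{eq:thm_p_2_q_2_unbounded} via the kernel argument from the start of Section \ref{sec:tractable}, and polynomial-time computability from the tractability of the pseudoinverse and the $2\mapsto 2$-norm (the paper cites a full-rank factorization and \cite[Example 5.6.6.]{horn2012} where you invoke the SVD, but these amount to the same standard facts).
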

	\begin{proof}
		Of course, \eqref{eq:thm_p_2_q_2} follows directly from Lemma \ref{lmm:p=2}, and \eqref{eq:thm_p_2_q_2_unbounded} follows from our discussion at the beginning of Section \ref{sec:tractable}. The polynomial runtime of \eqref{eq:thm_p_2_q_2} follows from the fact that $\mat{B}^+$ can be computed in polynomial time using a full rank factorization of $\mat{B}$ (see \cite[Theorem 5, p. 48]{BenIsrael2003}) and because the $2\mapsto 2$-norm is tractable (see \cite[Example 5.6.6]{Horn2012}).
	\end{proof}
	
	\section{Approximable Matrix Norms}
	\label{sec:approximable}
	We turn towards cases where $\norm{\mat{A}}_{p\mapsto q}$ can not necessarily be computed exactly in polynomial time but where approximations exist. We will present two main methods, one for $1 \leq q \leq 2 \leq p < \infty$, and the other for $q = 1$ and $p\in [1,2)$. As explained in the previous section, we may assume without loss of generality that $\mat{B}$ is injective.
	
	\subsection{The Case \texorpdfstring{$1 < q \leq 2 \leq p < \infty$}{1 < q < 2 < p < oo}}
	For this range of values for $p$ and $q$, \cite[Theorem 1.4.]{Bhattiprolu2023} showed that computing $\norm{\mat{A}}_{p\mapsto q}$ is $\APX$-hard, so it follows that evaluating $\norm{\mat{A}}_{p\mapsto q;\mat{B}}$ is also $\APX$-hard. Consequently, the best we can hope for is to find an approximation. We need several technical lemmas to construct such an algorithm:
	\begin{lemma}
		\label{lmm:E_ugamma}
		Let $\v{\grand}$ denote a standard Gaussian random vector in $\reals^k$, $\v{u} \in \reals^k$ an arbitrary but fixed vector, and $s \in [0,\infty)$. Then
		\begin{equation}
			\mean_{\v{\grand}}\left[\absiprod{\v{u}}{\v{\grand}}^s\right] = \gamma_s^s\norm{\v{u}}_2^s,
		\end{equation}
		where $\gamma_s$ denotes the $s$-th root of the $s$-th moment of a standard Gaussian random variable, i.e., $\gamma_s = \left(\frac{2^{s/2}}{\sqrt{\pi}}\Gamma\left(\frac{s+1}{2}\right)\right)^{1/s}$, where $\Gamma$ is the Euler Gamma function.
	\end{lemma}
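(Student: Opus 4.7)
The plan is to reduce the problem to the known formula for absolute moments of a one-dimensional standard Gaussian by exploiting the rotation-invariance (or, more precisely, the linearity of Gaussian vectors under linear combinations).

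First, I would handle the trivial case $\v{u} = \v{0}$ separately, where both sides are zero (adopting the convention $0^0 = 1$ for $s=0$). For $\v{u} \neq \v{0}$, the key observation is that the scalar random variable $\iprod{\v{u}}{\v{\grand}} = \sum_{i=1}^k u_i \grand_i$, being a linear combination of independent standard Gaussians, is itself a centered Gaussian. Its variance is $\sum_{i=1}^k u_i^2 = \norm{\v{u}}_2^2$, so we may write $\iprod{\v{u}}{\v{\grand}} = \norm{\v{u}}_2 \cdot \grand'$, where $\grand' \sim \mathcal{N}(0,1)$. Raising to the $s$-th power (with $|\cdot|$) and taking expectations, this immediately gives
\begin{equation*}
\mean_{\v{\grand}}\left[\absiprod{\v{u}}{\v{\grand}}^s\right] = \norm{\v{u}}_2^s \cdot \mean_{\grand'}\left[|\grand'|^s\right].
\end{equation*}

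It remains to show that $\mean_{\grand'}[|\grand'|^s] = \gamma_s^s$, i.e., that the $s$-th absolute moment of a standard Gaussian equals $\frac{2^{s/2}}{\sqrt{\pi}}\Gamma\left(\frac{s+1}{2}\right)$. This is a standard calculation which I would carry out by writing
\begin{equation*}
\mean_{\grand'}[|\grand'|^s] = \frac{2}{\sqrt{2\pi}}\int_0^\infty x^s e^{-x^2/2}\,dx
\end{equation*}
and performing the substitution $t = x^2/2$, which transforms the integrand into the defining integral of the Gamma function $\Gamma\left(\tfrac{s+1}{2}\right)$; after collecting the powers of $2$ and $\pi$, the claimed closed form drops out.

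There is no real obstacle here; the only point to be a little careful about is that the Gaussianity of $\iprod{\v{u}}{\v{\grand}}$ with the correct variance relies on the independence of the coordinates of $\v{\grand}$ (so that the variances add), and that the final step requires only $s\geq 0$ for the Gamma-function integral to converge, which matches the hypothesis $s\in[0,\infty)$.
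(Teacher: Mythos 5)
Your proof is correct and follows essentially the same route as the paper's: both reduce $\iprod{\v{u}}{\v{\grand}}$ to a centered Gaussian with standard deviation $\norm{\v{u}}_2$ and then invoke the $s$-th absolute moment of a standard Gaussian. The only difference is cosmetic — the paper cites a reference for that moment formula, whereas you derive it directly via the Gamma-function integral.
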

	\begin{proof}
		Since $\forall i, \grand_i \in \mathcal{N}(0, 1)$, we have $u_i\grand_i \in \mathcal{N}(0, u_i)$, which implies
		\begin{equation*}
			u_1\grand_1 + ... + u_k\grand_k \in \mathcal{N}(0, \sqrt{u_1^2+...+u_k^2}).
		\end{equation*}
		Therefore, $\grand':= \iprod{\v{u}}{\v{\grand}}$ is a Gaussian random vector with mean 0 and standard deviation $\norm{\v{u}}_2$, so its $s$-th moment is
		\begin{equation}
			\mean_{\v{\grand}}\left[\absiprod{\v{u}}{\v{\grand}}^s\right] = \mean_{\grand'}\left[|\grand'|^s\right] = \gamma_s^s\norm{\v{u}}_2^s,
		\end{equation}
		where for the last equality we used \cite[Eq.~(17)]{Winkelbauer2014}.
	\end{proof}
	We will need a way to transform a maximization problem into the computation of certain expectation values. This can be achieved by using the result from \cite{Bhattiprolu2018}:
	\begin{lemma}
		\label{lmm:vijay}
		Let $\Omega$ be a sample space, $\xrand : \Omega \rightarrow \Xset$ a random variable in some measurable space $\Xset$, and $f_1 : \Xset \rightarrow \reals$, $f_2 : \Xset \rightarrow [0,\infty)$ two functions. If $D:=f_2^{-1}(\{0\})$ has measure zero and $\Xset \backslash D \neq \emptyset$,
		\begin{equation}
			\label{eq:vijay}
			\sup_{y\in\Xset\backslash D}\frac{f_1(y)}{f_2(y)} \geq \frac{\mean_{\xrand}\left[f_1(\xrand)\right]}{\mean_{\xrand}\left[f_2(\xrand)\right]}.
		\end{equation}
	\end{lemma}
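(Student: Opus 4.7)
The plan is to prove the inequality by a simple pointwise-to-expectation lifting. Let $M := \sup_{y\in\Xset\backslash D} f_1(y)/f_2(y)$. If $M = +\infty$ the inequality \eqref{eq:vijay} is trivially true (since the denominator on the right is nonnegative), so I would assume $M < \infty$. By the definition of the supremum, for every $y\in \Xset\backslash D$ there holds $f_1(y) \leq M\cdot f_2(y)$, and since the hypothesis states that $D$ has measure zero, this pointwise inequality holds $\xrand$-almost surely. Integrating against the law of $\xrand$ (and using the monotonicity of the expectation) therefore gives
\begin{equation*}
\mean_{\xrand}\left[f_1(\xrand)\right] \leq M \cdot \mean_{\xrand}\left[f_2(\xrand)\right].
\end{equation*}

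Next I would divide through by $\mean_{\xrand}[f_2(\xrand)]$. Because $f_2\geq 0$ by hypothesis and $f_2(\xrand) > 0$ holds almost surely (the complement of $D$ is a full-measure set on which $f_2$ is strictly positive), the expectation $\mean_{\xrand}[f_2(\xrand)]$ is strictly positive, so the division is legitimate and produces exactly \eqref{eq:vijay}. The only degenerate subcase is $\mean_{\xrand}[f_2(\xrand)] = +\infty$: if the numerator is finite the ratio is zero and the inequality is trivial, and if the numerator is also $+\infty$, one invokes the convention $+\infty/+\infty \leq +\infty = M$ under $M$ finite, but this situation actually cannot arise from the pointwise bound since it would force $\mean_{\xrand}[f_1(\xrand)] \leq M\cdot \mean_{\xrand}[f_2(\xrand)]$ with a finite left-hand side first.

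There is really no substantive obstacle here: the entire content is the observation that an almost-sure pointwise upper bound survives integration. The only care needed is in the bookkeeping of degenerate cases (the supremum or the denominator being infinite), which is handled by the two observations above, and in verifying that the hypothesis $\Xset\backslash D \neq \emptyset$ merely guarantees that the supremum on the left is taken over a nonempty set so that $M$ is well-defined.
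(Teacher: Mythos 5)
Your proof is correct, but it runs in the opposite direction from the paper's. The paper (following the argument in Bhattiprolu et al.) fixes $\lambda := \mean_{\xrand}[f_1(\xrand)]/\mean_{\xrand}[f_2(\xrand)]$, introduces an auxiliary random variable $\xrand'$ supported on $\Xset\backslash D$, and uses the fact that a supremum dominates an expectation to get $\sup_{y\in\Xset\backslash D}\left(f_1(y)-\lambda f_2(y)\right) \geq \mean_{\xrand}\left[f_1(\xrand)-\lambda f_2(\xrand)\right] = 0$, from which \eqref{eq:vijay} is read off. You instead fix $M := \sup_{y\in\Xset\backslash D} f_1(y)/f_2(y)$, observe the pointwise bound $f_1 \leq M f_2$ off the null set $D$, and integrate. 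Both arguments are elementary and essentially one line long; yours has the small advantage of not needing the auxiliary variable $\xrand'$ and of bypassing the final implication in the paper's proof (passing from $\sup_y(f_1(y)-\lambda f_2(y))\geq 0$ back to a statement about the ratio), which is the only step there that requires any care. The one weak spot in your write-up is the handling of the degenerate case $\mean_{\xrand}[f_2(\xrand)]=+\infty$: calling the inequality ``trivial'' when the numerator is finite is not quite right as stated (it would fail if $M<0$), though it is rescued by noting that $M<0$ together with your pointwise bound forces $\mean_{\xrand}[f_1(\xrand)]=-\infty$. Since the lemma is only ever applied with both expectations finite and positive, this does not affect the validity of your argument.
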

	\begin{proof}
		The proof is almost identical to the argument in \cite[p. 10]{Bhattiprolu2018}: Let $\xrand' : \Omega \rightarrow \Xset\backslash D$ be defined through $\xrand'(\omega) = \xrand(\omega)$ if $\xrand(\omega) \not\in D$, and $\xrand'(\omega) = z$ otherwise, where $z$ is some arbitrary but fixed element in $\Xset\backslash D$. Since $D$ has measure zero, for any function $f : \Xset \rightarrow \reals$ there still holds $\mean_{\xrand'}\left[f(\xrand')\right] = \mean_{\xrand}\left[f(\xrand)\right]$.
		Let $\lambda = \mean_{\xrand}\left[f_1(\xrand)\right]/\mean_{\xrand}\left[f_2(\xrand)\right]$, then
		\begin{equation*}
			\sup_{y\in\Xset\backslash D}\left(f_1(y) - \lambda f_2(y)\right) \geq \mean_{\xrand'}\left[f_1(\xrand') - \lambda f_2(\xrand')\right] = \mean_{\xrand}\left[f_1(\xrand) - \lambda f_2(\xrand)\right] = 0,
		\end{equation*}
		which directly entails \eqref{eq:vijay}.
	\end{proof}
	Finally, we need one last lemma concerning the dual of certain semidefinite optimization problems:
	\begin{lemma}
		\label{lmm:semi-definite_dual}
		Let $\mat{M} \in \reals^{n\times n}$, $\mat{Q} \in \reals^{n\times m}$, and $\mat{R} \in \reals^{n\times l}$. Furthermore, let $\v{s} \in \reals^m$ and $\v{t} \in \reals^l$ be nonnegative vectors, that is, $s_i\geq 0$ and $t_j \geq 0$ for all $i,j$. Then
		\begin{equation}
			\max_{\substack{\mat{Z}\succeq 0\\\diag(\mat{R}^\top\mat{Z}\,\mat{R}) = \v{t}\\\diag(\mat{Q}^\top\mat{Z}\,\mat{Q}) = \v{s}}} \trace(\mat{M}\,\mat{Z}) = \min_{\v{v}, \v{w}} \Set{\v{v}^\top\v{t} + \v{w}^\top\v{s}}{\mat{R}\Diag(\v{v})\mat{R}^\top + \mat{Q}\Diag(\v{w})\mat{Q}^\top \succeq \mat{M}}
		\end{equation}
	\end{lemma}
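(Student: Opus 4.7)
The plan is to recognize the statement as a standard instance of Lagrangian duality for semidefinite programming. The primal is the maximization of the linear functional $\trace(\mat{M}\mat{Z})$ over the positive semidefinite cone subject to two vector-valued linear equality constraints on diagonal entries, so the associated dual can be computed explicitly by introducing Lagrange multipliers $\v{v}\in\reals^l$ and $\v{w}\in\reals^m$ for the constraints $\diag(\mat{R}^\top\mat{Z}\mat{R})=\v{t}$ and $\diag(\mat{Q}^\top\mat{Z}\mat{Q})=\v{s}$, respectively. Using the identity $\v{v}^\top\diag(\mat{N})=\trace(\Diag(\v{v})\mat{N})$ together with the cyclic property of the trace, the Lagrangian can be rewritten as
\begin{equation*}
\mathcal{L}(\mat{Z},\v{v},\v{w}) = \v{v}^\top\v{t}+\v{w}^\top\v{s} + \trace\bigl((\mat{M}-\mat{R}\Diag(\v{v})\mat{R}^\top-\mat{Q}\Diag(\v{w})\mat{Q}^\top)\mat{Z}\bigr).
\end{equation*}

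I would then compute the dual function by maximizing over $\mat{Z}\succeq 0$. A standard fact about the PSD cone says that $\sup_{\mat{Z}\succeq 0}\trace(\mat{N}\mat{Z})$ equals $0$ when $-\mat{N}\succeq 0$ (attained at $\mat{Z}=0$) and $+\infty$ otherwise. Applying this to the trace term above, the dual function equals $\v{v}^\top\v{t}+\v{w}^\top\v{s}$ whenever $\mat{R}\Diag(\v{v})\mat{R}^\top+\mat{Q}\Diag(\v{w})\mat{Q}^\top\succeq\mat{M}$, and $+\infty$ elsewhere. Minimizing this dual function gives exactly the right-hand side of the lemma, and weak duality immediately yields the $\leq$ inequality.

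The main obstacle is then to promote weak duality to strong duality. The plan is to invoke the standard SDP strong duality theorem, which requires a constraint qualification such as Slater's condition. I would verify strict feasibility of the dual: provided the columns of $\mat{R}$ and $\mat{Q}$ jointly span $\reals^n$, choosing all entries of $\v{v},\v{w}$ sufficiently large and positive makes $\mat{R}\Diag(\v{v})\mat{R}^\top+\mat{Q}\Diag(\v{w})\mat{Q}^\top$ arbitrarily positive definite and hence strictly dominates $\mat{M}$. In the degenerate case, where a nonzero vector $\v{y}$ lies in the common kernel of $\mat{R}^\top$ and $\mat{Q}^\top$, both sides of the claimed equality are simultaneously $+\infty$ when $\v{y}^\top\mat{M}\v{y}>0$ (the primal becomes unbounded by adding $\lambda\v{y}\v{y}^\top$ to any feasible $\mat{Z}$, and the dual is infeasible since testing the PSD constraint against $\v{y}$ forces $\v{y}^\top\mat{M}\v{y}\leq 0$), and otherwise one can restrict $\mat{Z}$ to the orthogonal complement of this kernel and reduce to the nondegenerate case. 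The hypothesis that $\v{s},\v{t}$ are nonnegative is exactly what ensures the primal problem is not a priori infeasible, since $\diag(\mat{R}^\top\mat{Z}\mat{R})_j=\v{r}_j^\top\mat{Z}\v{r}_j\geq 0$ for any $\mat{Z}\succeq 0$, and analogously for $\mat{Q}$.
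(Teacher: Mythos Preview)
Your proposal is correct and follows essentially the same route as the paper: both introduce Lagrange multipliers $\v{v},\v{w}$ for the two diagonal equality constraints, use the identity $\v{v}^\top\diag(\mat{N})=\trace(\Diag(\v{v})\mat{N})$ together with cyclicity of the trace to rewrite the Lagrangian, and then invoke strong duality for the resulting saddle-point problem. The only difference is cosmetic: the paper cites a Lagrange duality theorem from Luenberger directly to pass from the max-min to the constrained minimum, whereas you explicitly compute the dual function via $\sup_{\mat{Z}\succeq 0}\trace(\mat{N}\mat{Z})$ and then discuss Slater's condition (with a more careful treatment of the degenerate kernel case than the paper provides).
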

	\begin{proof}
		Following the arguments from \cite[Lemma 13.2.2]{Nesterov2000},
		\begin{align*}
			&\max_{\substack{\mat{Z}\succeq 0\\\diag(\mat{R}^\top\mat{Z}\,\mat{R})= \v{t}\\\diag(\mat{Q}^\top\mat{Z}\,\mat{Q}) = \v{s}}} \trace(\mat{M}\,\mat{Z})\\
			&\quad=\max_{\mat{Z}\succeq 0} \;\, \min_{\v{v},\v{w}} \;\, \trace(\mat{M}\,\mat{Z}) + \v{v}^\top(\v{t} - \diag(\mat{R}^\top\mat{Z}\,\mat{R})) + \v{w}^\top(\v{s}-\diag(\mat{Q}^\top\mat{Z}\,\mat{Q}))\\
			&\quad=\max_{\mat{Z}\succeq 0} \;\, \min_{\v{v},\v{w}} \;\, \trace(\mat{M}\,\mat{Z}) - \trace(\mat{R}\Diag(\v{v})\mat{R}^\top\mat{Z}) - \trace(\mat{Q}\Diag(\v{w})\mat{Q}^\top\mat{Z}) + \v{v}^\top\v{t} + \v{w}^\top\v{s}\\
		\end{align*}
		We can now apply Lagrange duality, i.e., \cite[Theorem 1, p. 224]{Luenberger1969} (using $x \equiv (\v{v},\v{w})$ and $f(\v{v},\v{w}) \equiv \v{v}^\top\v{t} + \v{w}^\top\v{s}$, with $G(\v{v},\v{w}) = \mat{M} - \mat{R}\Diag(\v{v})\mat{R}^\top - \mat{Q}\Diag(\v{w})\mat{Q}^\top$ and $\theta \equiv \mat{0}$. The relation $\geq$ in \cite[Theorem 1, p. 224]{Luenberger1969} then corresponds to the semidefinite relation $\succeq$, and $\varphi$ -- which is given in \cite[Eq. (2), p. 223]{Luenberger1969} -- corresponds to the minimum over $\v{v}$ and $\v{w}$). This results in
		\begin{align*}
			&\max_{\substack{\mat{Z}\succeq 0\\\diag(\mat{R}^\top\mat{Z}\,\mat{R})= \v{t}\\\diag(\mat{Q}^\top\mat{Z}\,\mat{Q}) = \v{s}}} \trace(\mat{M}\,\mat{Z})\\
			&\quad = \inf_{\v{v}, \v{w}} \Set{\v{v}^\top\v{t} + \v{w}^\top\v{s}}{\mat{M} - \mat{R}\Diag(\v{v})\mat{R}^\top - \mat{Q}\Diag(\v{w})\mat{Q}^\top \preceq \mat{0}}\\
			&\quad = \inf_{\v{v}, \v{w}} \Set{\v{v}^\top\v{t} + \v{w}^\top\v{s}}{\mat{R}\Diag(\v{v})\mat{R}^\top + \mat{Q}\Diag(\v{w})\mat{Q}^\top \succeq \mat{M}}.
		\end{align*}
		Note that the infimum can be replaced by a minimum, since the constraint set $\Set{(\v{v},\v{w}) \in \reals^l \times \reals^k}{\mat{R}\Diag(\v{v})\mat{R}^\top + \mat{Q}\Diag(\v{w})\mat{Q}^\top \succeq \mat{M}}$ is closed.
	\end{proof}
	We now have all the necessary tools to construct an approximation:
	\begin{theorem}
		\label{thm:main_q<2<p}
		Let $\mat{A} \in \reals^{m\times n}$, $\mat{B}\in \reals^{l\times n}$, and assume $\mat{B}$ is injective. Then, for $1 < q \leq 2 \leq p < \infty$,
		\begin{equation}
			\label{eq:main_thm_q<2<p}
			\norm{\mat{A}}_{p\mapsto q;\mat{B}} \leq \frac{1}{2}\min_{(\v{v},\v{w})\in K} \left(\norm{\v{v}}_{\frac{q}{2-q}}+\norm{\v{w}}_{\frac{p}{p-2}}\right) \leq \gamma_p \gamma_{q^*}\norm{\mat{A}}_{p\mapsto q;\mat{B}},
		\end{equation}
		where
		\begin{equation}
			\label{eq:K_def}
			K := \Set{(\v{v},\v{w}) \in \reals^{m}\times \reals^{l}}{\begin{bmatrix}\Diag(\v{v}) & -\mat{A}\\-\mat{A}^\top & \mat{B}^\top\Diag(\v{w})\mat{B}\end{bmatrix} \succeq 0},
		\end{equation}
		and $\gamma_s = \left(\frac{2^{s/2}}{\sqrt{\pi}}\Gamma\left(\frac{s+1}{2}\right)\right)^{1/s}$ for $s\in[1,\infty)$.
	\end{theorem}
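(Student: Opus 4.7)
The plan is to prove the two inequalities separately. The left-hand one is a direct Hölder argument on a quadratic form; the right-hand one is the substantive part and combines the SDP duality encoded in Lemma~\ref{lmm:semi-definite_dual} with a Gaussian rounding step in the spirit of~\cite{nesterov_semidefinite_1998,bhattiprolu_approximating_2018}. For the upper bound, I would fix $(\v{v},\v{w})\in K$ and test the PSD matrix defining $K$ against an arbitrary vector $\begin{pmatrix}\v{x}\\\v{y}\end{pmatrix}\in\reals^m\times\reals^n$, yielding
\begin{equation*}
	2\v{x}^\top\mat{A}\v{y} \leq \sum_{i=1}^m v_i x_i^2 + \sum_{j=1}^l w_j(\mat{B}\v{y})_j^2.
\end{equation*}
Since $(q/(2-q), q^*/2)$ and $(p/(p-2), p/2)$ are Hölder-conjugate pairs, Hölder's inequality bounds the right side by $\norm{\v{v}}_{q/(2-q)}\norm{\v{x}}_{q^*}^2 + \norm{\v{w}}_{p/(p-2)}\norm{\mat{B}\v{y}}_p^2$. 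Restricting to $\norm{\v{x}}_{q^*}\leq 1$ and $\norm{\mat{B}\v{y}}_p\leq 1$ and taking the supremum of the left side (which equals $2\norm{\mat{A}}_{p\mapsto q;\mat{B}}$ by duality of the $q$-norm applied to $\mat{A}\v{y}$) gives the first inequality.

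For the right inequality, I would first dualize $\norm{\v{v}}_{q/(2-q)} = \max_{\norm{\v{t}}_{q^*/2}\leq 1,\;\v{t}\geq 0}\v{t}^\top\v{v}$ and likewise $\norm{\v{w}}_{p/(p-2)}$ (noting that the PSD structure of $K$ lets us restrict to $\v{v},\v{w},\v{t},\v{s}\geq 0$), then swap the resulting $\min$--$\max$ via Lemma~\ref{lmm:semi-definite_dual} applied with $\mat{R}:=\begin{pmatrix}\mat{I}_m\\\mat{0}\end{pmatrix}$, $\mat{Q}:=\begin{pmatrix}\mat{0}\\\mat{B}^\top\end{pmatrix}$, and $\mat{M}:=\begin{pmatrix}\mat{0} & \mat{A}\\\mat{A}^\top & \mat{0}\end{pmatrix}$. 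This reformulates the middle of \eqref{eq:main_thm_q<2<p} as the SDP value $W:=\max\trace(\mat{A}\mat{Z}_{12}^\top)$ over block-PSD matrices $\mat{Z}=\begin{pmatrix}\mat{Z}_{11} & \mat{Z}_{12}\\\mat{Z}_{12}^\top & \mat{Z}_{22}\end{pmatrix}$ satisfying $\norm{\diag(\mat{Z}_{11})}_{q^*/2}\leq 1$ and $\norm{\diag(\mat{B}\mat{Z}_{22}\mat{B}^\top)}_{p/2}\leq 1$. Given an optimal $\mat{Z}^\star$, I would factor $\mat{Z}^\star=\mat{L}\mat{L}^\top$ with $\mat{L}=\begin{pmatrix}\mat{L}_1\\\mat{L}_2\end{pmatrix}$, draw a standard Gaussian $\v{\grand}$, and set $\v{x}(\v{\grand}):=\mat{L}_1\v{\grand}$, $\v{y}(\v{\grand}):=\mat{L}_2\v{\grand}$. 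A direct computation gives $\mean_{\v{\grand}}[\v{x}^\top\mat{A}\v{y}]=\trace(\mat{A}(\mat{Z}_{12}^\star)^\top)=W$, while Lemma~\ref{lmm:E_ugamma} applied coordinate-wise together with the SDP diagonal constraints yields $\mean_{\v{\grand}}[\norm{\v{x}}_{q^*}^{q^*}]\leq \gamma_{q^*}^{q^*}$ and $\mean_{\v{\grand}}[\norm{\mat{B}\v{y}}_p^p]\leq \gamma_p^p$. Since $q^*,p\geq 2$, Jensen's inequality then upgrades these to $\mean_{\v{\grand}}[\norm{\v{x}}_{q^*}^2]\leq \gamma_{q^*}^2$ and $\mean_{\v{\grand}}[\norm{\mat{B}\v{y}}_p^2]\leq \gamma_p^2$, so Cauchy--Schwarz produces $\mean_{\v{\grand}}[\norm{\v{x}}_{q^*}\cdot\norm{\mat{B}\v{y}}_p]\leq \gamma_{q^*}\gamma_p$. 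Applying Lemma~\ref{lmm:vijay} to $f_1(\v{\grand}):=\v{x}^\top\mat{A}\v{y}$ and $f_2(\v{\grand}):=\norm{\v{x}}_{q^*}\cdot\norm{\mat{B}\v{y}}_p$ extracts a realization for which $f_1/f_2\geq W/(\gamma_{q^*}\gamma_p)$; since $\sup_{\v{x},\v{y}\neq 0}f_1/f_2=\norm{\mat{A}}_{p\mapsto q;\mat{B}}$ by scale invariance, rearrangement yields $W\leq \gamma_p\gamma_{q^*}\norm{\mat{A}}_{p\mapsto q;\mat{B}}$, which is the claim.

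The main obstacle will be the careful bookkeeping in the SDP-duality step: identifying the correct choice of $\mat{R},\mat{Q},\mat{M}$ and the precise normalization of the dualizing variables $\v{t},\v{s}$ so that Lemma~\ref{lmm:semi-definite_dual} produces exactly the SDP whose value is $W$, and verifying that $\norm{\diag(\mat{Z}_{11})}_{q^*/2}\leq 1$ (respectively $\norm{\diag(\mat{B}\mat{Z}_{22}\mat{B}^\top)}_{p/2}\leq 1$) is the right diagonal constraint for the Gaussian moment to land at $\gamma_{q^*}^{q^*}$ (respectively $\gamma_p^p$). Justifying that one may restrict to $\v{v},\v{w}\geq 0$ in the minimum---by observing that replacing $\v{w}$ with $|\v{w}|$ preserves both the PSD constraint of $K$ and the objective---is a small but essential preliminary, and matching the factor $1/2$ in \eqref{eq:main_thm_q<2<p} against the factor $2$ in $\trace(\mat{M}\mat{Z})=2\trace(\mat{A}\mat{Z}_{12}^\top)$ is where the constants must be tracked with care.
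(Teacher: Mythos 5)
Your plan is correct, and for the substantive right-hand inequality it follows essentially the paper's route: reformulate the middle quantity as the value $W$ of a semidefinite program via Lemma~\ref{lmm:semi-definite_dual}, then round a Gram factorization $\mat{Z}^\star = \mat{L}\,\mat{L}^\top$ with a Gaussian vector, using Lemma~\ref{lmm:E_ugamma} for the moment computations and Lemma~\ref{lmm:vijay} to pass from expectations back to a supremum (your Jensen-plus-Cauchy--Schwarz bound on $\mean[\nnorm{\v{x}}_{q^*}\nnorm{\mat{B}\v{y}}_p]$ is an equivalent substitute for the paper's direct H\"older step with exponents $p$ and $q^*$). Where you genuinely diverge is the left-hand inequality: the paper obtains it by plugging the rank-one choice $k=1$ into the vectorized relaxation and then relying on the full chain of equalities of its Step~3 to identify that relaxation with $\frac12\min_{(\v{v},\v{w})\in K}(\cdot)$, whereas you prove it directly by testing the positive semidefinite matrix defining $K$ against $(\v{x},\v{y})$ and applying H\"older with the conjugate pairs $(q/(2-q),\,q^*/2)$ and $(p/(p-2),\,p/2)$. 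Your argument is a clean weak-duality certificate that needs only one direction of the SDP correspondence, and your observation that one may replace $\v{w}$ by $|\v{w}|$ without leaving $K$ or changing the objective is exactly the justification needed to dualize the norms with nonnegative multipliers. One caution: the exchange of $\min_{(\v{v},\v{w})\in K}$ with $\max_{\nnorm{\v{t}}_{q^*/2}\leq 1,\,\nnorm{\v{s}}_{p/2}\leq 1}$ is \emph{not} supplied by Lemma~\ref{lmm:semi-definite_dual} (which only dualizes the inner problem for fixed $\v{t},\v{s}$); you still need a minimax theorem --- the paper invokes Sion's theorem, which applies here precisely because $q\leq 2\leq p$ makes the constraint sets for $\v{t}$ and $\v{s}$ convex --- and without it you would only get the inequality $\min\max\geq\max\min$, which points the wrong way for the upper bound. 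Make that invocation explicit and the proof is complete.
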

	\begin{proof}	
		Before we prove \eqref{eq:main_thm_q<2<p}, we first show	
		\begin{equation}
			\label{eq:main_thm_q<2<p_original}
			\norm{\mat{A}}_{p\mapsto q;\mat{B}} \leq \max_{\substack{\mat{X} = \mat{U}^\top\mat{V}\\\norm{\mat{V}}_{L_{2,q^*}}\leq 1\\\norm{\mat{B}\,\mat{U}^\top}_{L_{2,p}^\top}\leq 1}} \trace(\mat{A}\,\mat{X}) \leq \gamma_p\gamma_{q^*}\norm{\mat{A}}_{p\mapsto q;\mat{B}},
		\end{equation}
		where the maximum in \eqref{eq:main_thm_q<2<p_original} is taken over all decompositions $\mat{X} = \mat{U}^\top\mat{V}$, with $\mat{U} \in \reals^{k\times n}$ and $\mat{V} \in \reals^{k \times m}$ for some $k\in \naturals$.
		
		\noindent\textbf{Step 1: The first inequality}
		
		We begin with the first inequality of \eqref{eq:main_thm_q<2<p_original}.
		This follows by choosing $k=1$, $\mat{U} = \v{\mu}^\top$, and $\mat{V} = \v{\nu}^\top$ for row vectors $\v{\mu} \in \reals^{n}$ and $\v{\nu} \in \reals^{m}$:
		\begin{align*}
			\max_{\substack{\mat{X} = \mat{U}^\top\mat{V}\\\norm{\mat{V}}_{L_{2,q^*}}\leq 1\\\norm{\mat{B}\,\mat{U}^\top}_{L_{2,p}^\top}\leq 1}} \trace(\mat{A}\,\mat{X})
			\geq \max_{\substack{\norm{\v{\nu}}_{q^*}\leq 1\\\norm{\mat{B}\v{\mu}}_{p}\leq 1}} \v{\nu}^\top\mat{A}\v{\mu}
			= \norm{\mat{A}}_{p\mapsto q;\mat{B}}.
		\end{align*}
		
		\medskip
		
		\noindent\textbf{Step 2: Probabilistic relaxation}
		
		We turn to the second inequality of \eqref{eq:main_thm_q<2<p_original}.	
		Choose an arbitrary $k\in \naturals$, and let $\v{u}_i, \v{v}_j \in \reals^k$ be vectors for $i=1,...,n$ and $j=1,...,m$, and let $\mat{U}$ and $\mat{V}$ be the matrices with columns $\v{u}_i$ and $\v{v}_j$, respectively. For the rest of this step, $\v{u}_i$ and $\v{v}_j$ can be arbitrary, but can not be chosen such that all $\v{u}_i$ are zero or all $\v{v}_j$ are zero (in other words, $\mat{U}$ and $\mat{V}$ may not be the zero matrix).
		By duality,
		\begin{align*}
			\norm{\mat{A}}_{p\mapsto q;\mat{B}} &= \max_{\norm{\mat{B}\v{x}}_p\leq 1}\nnorm{\mat{A}\v{x}}_q
			=\max_{\substack{\norm{\v{y}}_{q^*}\leq 1\\\norm{\mat{B}\v{x}}_p\leq 1}}\v{y}^\top\mat{A}\v{x}
			=\max_{\v{x}\neq \v{0}, \v{y}\neq \v{0}}\frac{\sum_{ij}A_{ji}x_iy_j}{\norm{\mat{B}\v{x}}_p\norm{\v{y}}_{q^*}}.
		\end{align*}
		We use a similar technique to \cite{Guruswami2016} and \cite{Bhattiprolu2018}: Let $\v{\grand}$ denote a standard Gaussian random vector in $\reals^{k}$.
		We replace $(\v{x},\v{y})$ by $(\mat{U}^\top\v{\grand}, \mat{V}^\top\v{\grand})$ according to Lemma \ref{lmm:vijay} to obtain
		\begin{equation}
			\label{eq:numerator_denumerator_q<2<p}
			\norm{\mat{A}}_{p\mapsto q;\mat{B}} \geq \frac{\mean_{\v{\grand}}\left[\sum_{ij}A_{ji}\v{u}_i^\top\v{\grand}\v{\grand}^\top\v{v}_j\right]}{\mean_{\v{\grand}}\left[\norm{\mat{B}\,\mat{U}^\top\v{\grand}}_p \norm{\mat{V}^\top\v{\grand}}_{q^*}\right]}.
		\end{equation}
		We compute the numerator and the denominator of \eqref{eq:numerator_denumerator_q<2<p} separately. The numerator is simple, since $\mean_{\v{\grand}}\left[\v{\grand}\v{\grand}^\top\right] = \mat{I}$ we have
		\begin{equation*}
			\mean_{\v{\grand}}\big[\sum_{ij}A_{ji}\v{u}_i^\top\v{\grand}\v{\grand}^\top\v{v}_j\big] = \sum_{ij}A_{ji}\v{u}_i^\top\v{v}_j = \trace(\mat{A}\,\mat{U}^\top\mat{V}).
		\end{equation*}
		For the denominator, since $q \leq 2 \leq p$ there holds $1/p + 1/q^* \leq 1$, so we may use the Hölder inequality:
		\begin{equation*}
			\mean_{\v{\grand}}\left[\norm{\mat{B}\,\mat{U}^\top\v{\grand}}_p \norm{\mat{V}^\top\v{\grand}}_{q^*}\right] \leq \left(\mean_{\v{\grand}}\left[\norm{\mat{B}\,\mat{U}^\top\v{\grand}}_p^p\right]\right)^{1/p} \left(\mean_{\grand}\left[\norm{\mat{V}^\top\v{\grand}}_{q^*}^{q^*}\right]\right)^{1/q^*}.
		\end{equation*}
		Let $\v{\beta}_i$ denote the rows of $\mat{B}$ for $i=1,...,l$. Using Lemma \ref{lmm:E_ugamma},
		\begin{align*}
			\mean_{\v{\grand}}\left[\norm{\mat{B}\,\mat{U}^\top\v{\grand}}_p^p\right] &= \sum_i\mean_{\v{\grand}}\left[|\v{\beta}_i^\top\mat{U}^\top\v{\grand}|^p\right]
			=\sum_i \gamma_p^p\norm{\mat{U}\v{b}_i}_2^p
			=\gamma_p^p\norm{\mat{B}\,\mat{U}^\top}_{L_{2,p}^\top}^p
		\end{align*} 
		and similarly
		\begin{align*}
			\mean_{\grand}\left[\norm{\mat{V}^\top\v{\grand}}_{q^*}^{q^*}\right] &= \sum_j \gamma_{q^*}^{q^*}\norm{\v{v}_j}_2^{q^*}
			= \gamma_{q^*}^{q^*}\norm{\mat{V}}_{L_{2,q^*}}^{q*}.
		\end{align*}
		Putting everything together, we obtain the second inequality of \eqref{eq:main_thm_q<2<p_original}.
		
		\medskip
		
		\noindent\textbf{Step 3: Positive Semidefinite Reformulation}
		
		As in the previous step, let $\v{\beta}_i$ denote the $i$-th row of $\mat{B}$, and $\v{v}_j$ the $j$-th column of $\mat{V}$. Then
		\begin{align*}
			\max_{\substack{\mat{X} = \mat{U}^\top\mat{V}\\\norm{\mat{V}}_{L_{2,q^*}}\leq 1\\\norm{\mat{B}\,\mat{U}^\top}_{L_{2,p}^\top}\leq 1}} \trace(\mat{A}\,\mat{X})
			&=\max_{\substack{\left(\sum_j (\v{v}_j^\top\v{v}_j)^{q^*/2}\right)^{1/q^*}\leq 1\\\left(\sum_i(\v{\beta}_i^\top\mat{U}^\top\mat{U}\v{\beta}_i)^{p/2}\right)^{1/p}\leq 1}} \frac{1}{2}\trace\left(\begin{bmatrix}\mat{0} & \mat{A}\\\mat{A}^\top & \mat{0}\end{bmatrix}\begin{bmatrix}\mat{V}^\top\\\mat{U}^\top\end{bmatrix}\begin{bmatrix}\mat{V} & \mat{U}\end{bmatrix}\right)\\
			&=\max_{\substack{\nnorm{\v{t}}_{q^*/2} \leq 1\\ \nnorm{\v{s}}_{p/2} \leq 1}} \;\, \max_{\substack{Z\succeq0\\\diag\left(\mat{P}_{\mat{V}}^\top\mat{Z}\,\mat{P}_{\mat{V}}\right) = \v{t}\\\diag\left(\mat{B}\,\mat{P}_{\mat{U}}^\top\mat{Z}\,\mat{P}_{\mat{U}}\mat{B}^\top\right) = \v{s}}}  \;\, \frac{1}{2}\trace\left(\begin{bmatrix}\mat{0} & \mat{A}\\\mat{A}^\top & \mat{0}\end{bmatrix}\mat{Z}\right)
		\end{align*}
		where
		\begin{equation}
			\mat{P}_{\mat{V}} := \begin{bmatrix}\mat{I}_{m}\\\mat{0}_{n\times m}\end{bmatrix},
			\qquad
			\mat{P}_{\mat{U}} := \begin{bmatrix}\mat{0}_{m\times n}\\\mat{I}_n\end{bmatrix}.
		\end{equation}
		Using Lemma \ref{lmm:semi-definite_dual} yields
		\begin{equation}
			\label{eq:almost_final_q<2<p}
			\max_{\substack{\mat{X} = \mat{U}^\top\mat{V}\\\norm{\mat{V}}_{L_{2,q^*}}\leq 1\\\norm{\mat{B}\,\mat{U}^\top}_{L_{2,p}^\top}\leq 1}} \trace(\mat{A}\,\mat{X})
			=\frac{1}{2}\max_{\substack{\nnorm{\v{t}}_{q^*/2} \leq 1\\ \nnorm{\v{s}}_{p/2} \leq 1}} \;\, \min_{(\v{v},\v{w})\in K} \;\, \left( \v{v}^\top\v{t}+\v{w}^\top\v{s}\right),
		\end{equation}
		where $K$ is the set of pairs $(\v{v},\v{w}) \in \reals^{m}\times \reals^{l}$ such that
		\begin{align*}
			&\begin{bmatrix}\mat{I}\\\mat{0}\end{bmatrix}\Diag(\v{v})\begin{bmatrix}\mat{I}&\mat{0}\end{bmatrix}
			+\begin{bmatrix}\mat{0}\\\mat{I}\end{bmatrix}\mat{B}^\top\Diag(\v{w})\mat{B}\begin{bmatrix}\mat{0}&\mat{I}\end{bmatrix}
			\succeq \begin{bmatrix}\mat{0} & \mat{A}\\\mat{A}^\top & \mat{0}\end{bmatrix}\\
			\Leftrightarrow\quad&\begin{bmatrix}\Diag(\v{v}) & \mat{0}\\\mat{0} &\mat{0}\end{bmatrix}
			+\begin{bmatrix}\mat{0} & \mat{0}\\\mat{0} &\mat{B}^\top\Diag(\v{w})\mat{B}\end{bmatrix}
			\succeq \begin{bmatrix}\mat{0} & \mat{A}\\\mat{A}^\top & \mat{0}\end{bmatrix}\\
			\Leftrightarrow\quad&\begin{bmatrix}\Diag(\v{v}) & -\mat{A}\\-\mat{A}^\top & \mat{B}^\top\Diag(\v{w})\mat{B}\end{bmatrix} \succeq 0.
		\end{align*}%
		Since $q \leq 2 \leq p$, the functions $\nnorm{\v{t}}_{q^*/2}$ and $\norm{\v{s}}_{p/2}$ are convex, and thus by a simple application of the minimax theorem (specifically, \cite[Corollary 3.3]{Sion1958}), we can swap the maximum and minimum in \eqref{eq:almost_final_q<2<p} to obtain \eqref{eq:main_thm_q<2<p} with $K$ as defined in \eqref{eq:K_def}.
	\end{proof}
	
	\begin{remark}
		\label{rmk:comparison_vijay}
		For $\mat{B} = \mat{I}$, Theorem \ref{thm:main_q<2<p} yields a $\gamma_p \gamma_{q^*}$-approximation to the $p\mapsto q$-norm for $1 < q \leq 2 \leq p < \infty$. On the other hand, \cite[Theorem 1.4.]{Bhattiprolu2023} showed the $p\mapsto q$-norm is $\NP$-hard to approximate within a factor $1/(\gamma_{p^*}\gamma_q) - \varepsilon$ for any $\varepsilon > 0$, while proposing a $C/(\gamma_{p^*}\gamma_q)$-approximation in \cite{Bhattiprolu2018} for a constant $C \leq 1.00863/\ln(1+\sqrt{2})$ (in fact, our approximation from Theorem \ref{thm:main_q<2<p} coincides with that of \cite{Bhattiprolu2018} for $\mat{B} = \mat{I}$). One can verify $1/\gamma_{s^*} \leq \gamma_s$ for $2 \leq s < \infty$, with equality for $s=2$. Therefore, our bound is consistent with the hardness result from \cite{Bhattiprolu2023}. For most cases, our bound is worse than that of \cite{Bhattiprolu2018}, except for values of $p,q^* \in [2, \infty)$ in a small neighborhood around $(p, q^*) = (2,2)$. 
	\end{remark}
	\subsection{The Case \texorpdfstring{$p\in[2, \infty)$}{p ∈ [2,oo)]}, \texorpdfstring{$q=1$}{q = 1}}
	It was shown in \cite[Theorem 1.1.]{Bhattiprolu2023} that computing $\norm{\mat{A}}_{\infty \mapsto s}$ is $\APX$-hard for $s\in(1,2]$. Since $\norm{\mat{A}}_{\infty \mapsto s} = \norm{\mat{A}^\top}_{s^* \mapsto 1}$ by \cite[Proposition 1.2]{Steinberg2005}, it follows that computing $\norm{\mat{A}}_{p \mapsto 1}$ is $\APX$-hard for $p\in [2,\infty)$. This suggests, again, that the best we can hope for is an approximation. We will find one using the same tools as for the case $1<q\leq 2\leq p<\infty$:
	\begin{theorem}
		\label{thm:main_q=1_2<p}
		Let $\mat{A} \in \reals^{m\times n}$, $\mat{B}\in \reals^{l\times n}$, and assume $\mat{B}$ is injective. Then, for any $p \in [2,\infty)$ (and $q=1$),
		\begin{equation}
			\label{eq:main_thm_q=1_2<p}
			\norm{\mat{A}}_{p\mapsto 1;\mat{B}} \leq \frac{1}{2}\min_{(\v{v},\v{w})\in K} \left(\norm{\v{v}}_{1}+\norm{\v{w}}_{\frac{p}{p-2}}\right) \leq \frac{\gamma_p}{\gamma_1} \norm{\mat{A}}_{p\mapsto q;\mat{B}},
		\end{equation}
		where
		\begin{equation}
			K := \Set{(\v{v},\v{w}) \in \reals^{m}\times \reals^{l}}{\begin{bmatrix}\Diag(\v{v}) & -\mat{A}\\-\mat{A}^\top & \mat{B}^\top\Diag(\v{w})\mat{B}\end{bmatrix} \succeq 0},
		\end{equation}
		and $\gamma_s = \left(\frac{2^{s/2}}{\sqrt{\pi}}\Gamma\left(\frac{s+1}{2}\right)\right)^{1/s}$ for $s\in[1,\infty)$.
	\end{theorem}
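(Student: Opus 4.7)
The proof will follow the same three-step structure as Theorem \ref{thm:main_q<2<p}, with Steps 1 and 3 requiring only the substitution $q^* = \infty$. Step 2, however, demands a different probabilistic argument, since Hölder's inequality can no longer be used to bound $\mean_{\v{\grand}}[\norm{\mat{V}^\top\v{\grand}}_{\infty}]$. The plan is to introduce the intermediate quantity
\begin{equation*}
	I := \max_{\substack{\mat{X} = \mat{U}^\top\mat{V}\\\norm{\mat{V}}_{L_{2,\infty}}\leq 1\\\norm{\mat{B}\,\mat{U}^\top}_{L_{2,p}^\top}\leq 1}} \trace(\mat{A}\,\mat{X}),
\end{equation*}
establish $\norm{\mat{A}}_{p\mapsto 1;\mat{B}} \leq I \leq \frac{\gamma_p}{\gamma_1}\norm{\mat{A}}_{p\mapsto 1;\mat{B}}$, and then reformulate $I$ via a semidefinite duality argument.

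Step 1 mirrors the previous proof: taking $k = 1$ with $\mat{U} = \v{\mu}^\top$ and $\mat{V} = \v{\nu}^\top$ reduces $I$ to $\max_{\norm{\v{\nu}}_\infty\leq 1,\,\norm{\mat{B}\v{\mu}}_p\leq 1} \v{\nu}^\top\mat{A}\v{\mu}$, which equals $\norm{\mat{A}}_{p\mapsto 1;\mat{B}}$ by duality. For Step 2, the key idea is to replace Gaussian rounding by sign rounding: for a standard Gaussian $\v{\grand}\in\reals^k$, pick $(\v{x},\v{y}) = (\mat{U}^\top\v{\grand},\sign(\mat{V}^\top\v{\grand}))$ in $\reals^n \times \{-1,+1\}^m$ and apply Lemma \ref{lmm:vijay} with $f_1(\v{x},\v{y}) = \v{y}^\top\mat{A}\v{x}$ and $f_2(\v{x},\v{y}) = \norm{\mat{B}\v{x}}_p$. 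The identity $\max_{\v{y}\in\{-1,+1\}^m}\v{y}^\top\mat{A}\v{x} = \norm{\mat{A}\v{x}}_1$ ensures the supremum in Lemma \ref{lmm:vijay} equals $\norm{\mat{A}}_{p\mapsto 1;\mat{B}}$. By rotational symmetry of the Gaussian together with $\mean[|\grand|] = \gamma_1$ (i.e.\ Lemma \ref{lmm:E_ugamma} with $s=1$), one obtains $\mean_{\v{\grand}}[\sign(\v{v}_j^\top\v{\grand})\v{\grand}] = \gamma_1 \v{v}_j/\norm{\v{v}_j}_2$; assuming WLOG $\norm{\v{v}_j}_2 = 1$ for each column of $\mat{V}$ (any $\v{v}_j$ with $\norm{\v{v}_j}_2 < 1$ can be replaced by the unit vector in the direction of the $j$-th column of $\mat{U}\mat{A}^\top$ without decreasing the trace, and zero columns can be replaced by any unit vector), the numerator evaluates to $\gamma_1\trace(\mat{A}\,\mat{X})$. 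The denominator is bounded via Jensen's inequality combined with Lemma \ref{lmm:E_ugamma}, giving $\mean_{\v{\grand}}[\norm{\mat{B}\,\mat{U}^\top\v{\grand}}_p] \leq (\mean_{\v{\grand}}[\norm{\mat{B}\,\mat{U}^\top\v{\grand}}_p^p])^{1/p} = \gamma_p\norm{\mat{B}\,\mat{U}^\top}_{L_{2,p}^\top}$, and rearranging yields $I \leq (\gamma_p/\gamma_1)\norm{\mat{A}}_{p\mapsto 1;\mat{B}}$.

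Step 3 reuses the semidefinite reformulation of Theorem \ref{thm:main_q<2<p} almost verbatim: Lemma \ref{lmm:semi-definite_dual} applies regardless of the exponent, and Sion's minimax theorem is still valid since the $(\v{t},\v{s})$-domain remains compact and convex when $q^*/2 = \infty$ turns $\norm{\v{t}}_{q^*/2}\leq 1$ into $\norm{\v{t}}_\infty\leq 1$. The outer maximum over $\v{t}\in[0,1]^m$ collapses to $\norm{\v{v}}_1$ because $(\v{v},\v{w})\in K$ forces $\v{v}\geq 0$. The outer maximum over $\v{s}$ naively yields only $\norm{\v{w}^+}_{p/(p-2)}$, where $\v{w}^+$ is the componentwise positive part; however, replacing $\v{w}$ by $\v{w}^+$ preserves membership in $K$, since $\mat{B}^\top\Diag(\v{w}^+)\mat{B} = \mat{B}^\top\Diag(\v{w})\mat{B} + \mat{B}^\top\Diag(\v{w}^-)\mat{B}$ with both summands PSD, so the minimum over $K$ is attained at some $\v{w}\geq 0$, and the intended expression $\norm{\v{v}}_1 + \norm{\v{w}}_{p/(p-2)}$ emerges.

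The main obstacle will be the careful justification of the WLOG normalization $\norm{\v{v}_j}_2 = 1$ in Step 2, handling the degenerate cases where either $\v{v}_j = \v{0}$ or $\v{\alpha}_j := \mat{U}(\mat{A}^\top\v{e}_j) = \v{0}$, and similarly the $\v{w}^+$ substitution trick in Step 3 that is needed to match $\norm{\v{w}}_{p/(p-2)}$ rather than $\norm{\v{w}^+}_{p/(p-2)}$ in the final expression.
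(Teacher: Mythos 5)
Your proposal follows essentially the same route as the paper: the same intermediate trace-maximization quantity, the rank-one decomposition for the first inequality, Gaussian sign-rounding via the identity $\mean_{\v{\grand}}[\v{u}^\top\v{\grand}\,\sign(\v{v}^\top\v{\grand})] = \gamma_1\v{u}^\top\v{v}/\norm{\v{v}}_2$ for the second, and the semidefinite dual reformulation carried over verbatim from Theorem \ref{thm:main_q<2<p}. The only differences are cosmetic: you normalize the columns of $\mat{V}$ to unit length where the paper instead bounds $\norm{\v{v}_j}_2 \leq \norm{\mat{V}}_{L_{2,\infty}}$ termwise, and you spell out the nonnegativity bookkeeping in Step 3 that the paper leaves implicit.
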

	\begin{proof}
		The proof is identical to that of Theorem \ref{thm:main_q<2<p}, once we establish
		\begin{equation}
			\label{eq:main_thm_q=1_2<p_original}
			\max_{\substack{\mat{X} = \mat{U}^\top\mat{V}\\\norm{\mat{V}}_{L_{2,\infty}}\leq 1\\\norm{\mat{B}\,\mat{U}^\top}_{L_{2,p}^\top}\leq 1}} \trace(\mat{A}\,\mat{X}) \leq \frac{\gamma_p}{\gamma_1}\norm{\mat{A}}_{p\mapsto 1;\mat{B}}.
		\end{equation}
		Similarly to the proof of Theorem \ref{thm:main_q<2<p}, we choose an arbitrary $k\in \naturals$ and vectors $\v{u}_i, \v{v}_j \in \reals^k$ that are the columns of matrices $\mat{U}$ and $\mat{V}$, which we assume to be different from the zero matrix. We again use the fact that
		\begin{equation}
			\norm{\mat{A}}_{p\mapsto 1;\mat{B}} = \max_{\v{x}\neq \v{0}, \v{y}\neq \v{0}}\frac{\sum_{ij}A_{ji}x_iy_j}{\norm{\mat{B}\v{x}}_p\norm{\v{y}}_{\infty}},
		\end{equation}
		and using a standard Gaussian random variable $\v{\grand}$ in $\reals^{k}$, we replace $\v{x}$ by $\mat{U}^\top\v{\grand}$ according to Lemma \ref{lmm:vijay}. However we do not yet replace $\v{y}$. This gives us
		\begin{equation}
			\label{eq:numerator_denumerator_q=1_2<p}
			\norm{\mat{A}}_{p\mapsto q;\mat{B}} \geq \frac{\mean_{\v{\grand}}\left[\max_{\v{y}\neq \v{0}}\frac{\sum_{ij}A_{ji}\v{u}_i^\top\v{\grand}\cdot y_j}{\norm{\v{y}}_{\infty}}\right]}{\mean_{\v{\grand}}\left[\norm{\mat{B}\,\mat{U}^\top\v{\grand}}_p\right]}.
		\end{equation}
		The (main) denominator can be bounded as in the proof of Theorem \ref{thm:main_q<2<p}:
		\begin{equation*}
			\mean_{\v{\grand}}\left[\norm{\mat{B}\,\mat{U}^\top\v{\grand}}_p\right] \leq \gamma_p\norm{\mat{B}\,\mat{U}^\top}_{L_{2,p}^\top}.
		\end{equation*}
		As for the numerator, we replace $y_j$ by $\sign(\v{v}_j^\top\v{\grand})$ (note that we do not use Lemma \ref{lmm:vijay}, just the regular definition of the maximum), which yields
		\begin{equation*}
			\mean_{\v{\grand}}\left[\max_{\v{y}\neq \v{0}}\frac{\sum_{ij}A_{ji}\v{u}_i^\top\v{\grand} \cdot y_j}{\norm{\v{y}}_{\infty}}\right] \geq \sum_{ij}A_{ji}\mean_{\v{\grand}}\left[\v{u}_i^\top\v{\grand}\sign(\v{v}_j^\top\v{\grand})\right].
		\end{equation*}
		For $\v{v}_j \neq \v{0}$, since $\sign(\frac{\v{v}_j^\top}{\norm{\v{v}_j}_2}\v{\grand}) = \sign(\v{v}_j^\top\v{\grand})$, according to \cite[Equation (4.3)]{Alon2006},
		\begin{equation}
			\label{eq:alon_naor}
			\mean_{\v{\grand}}\left[\v{u}_i^\top\v{\grand}\sign(\v{v}_j^\top\v{\grand})\right] = \sqrt{2/\pi} \cdot\frac{\v{u}_i^\top\v{v}_j}{\norm{\v{v}_j}_2} \geq \sqrt{2/\pi} \cdot\frac{\v{u}_i^\top\v{v}_j}{\norm{\mat{V}}_{L_{2,\infty}}}.
		\end{equation}
		Note that
		\begin{equation}
			\label{eq:alon_naor_generalized}
			\mean_{\v{\grand}}\left[\v{u}_i^\top\v{\grand}\sign(\v{v}_j^\top\v{\grand})\right] \geq \sqrt{2/\pi} \cdot\frac{\v{u}_i^\top\v{v}_j}{\norm{\mat{V}}_{L_{2,\infty}}}.
		\end{equation}
		still holds for $\v{v}_j = \v{0}$.	Since $\gamma_1 = \sqrt{2/\pi}$, this results in the bound from \eqref{eq:main_thm_q=1_2<p_original}.
	\end{proof}
	
	\begin{remark}
		Using the same arguments as in Remark \ref{rmk:comparison_vijay}, Theorem \ref{thm:main_q=1_2<p} yields a $\gamma_p/\gamma_1$-approximation for the $p\mapsto 1$-norm of a matrix (and the algorithm coincides with that of \cite{Bhattiprolu2018}). This bound is better than the one in \cite{Bhattiprolu2018} for $p\lessapprox 3.98969$. In particular, for $p=2$ our bound is optimal according to \cite[Theorem 1.4.]{Bhattiprolu2023}, unless $\Poly = \NP$.
	\end{remark}
	
	\subsection{The Case \texorpdfstring{$p\in [1,2)$}{p ∈ [1,2)}, \texorpdfstring{$q = 1$}{q = 1}}
	It was proven in \cite[Theorem 6.4.]{Bhaskara2011} that $\norm{\mat{A}}_{\infty \mapsto q}$ for $q\in (2,\infty)$ is not approximable in polynomial time within any constant factor, unless $\Poly = \NP$. By duality, this means $\norm{\mat{A}}_{p \mapsto 1}$ is not approximable for $p\in (1, 2)$. Moreover, we prove in \cite{Kulmburg2024} that $\norm{\mat{A}}_{1 \mapsto 1; \mat{B}}$ is not approximable, unless $\NP = \problem{RP}$. Therefore, we can only hope for algorithms that have a non-constant approximation ratio. To deduce one such approximation, we will use the Kahane contraction principle:
	\begin{theorem}[Kahane contraction principle (see {\cite[p. 51, Theorem 12.1]{Schwartz1981}})]
		\label{thm:kahane}
		For $k\in\naturals$ let $\mathsf{v}_1,...,\mathsf{v}_k$ be symmetric, independent random variables in a Banach space $\Vset$ with norm $\norm$, and let $c_1,\cdots, c_k \in \reals$. Then, for any $p\geq 1$,
		\begin{equation}
			\mean_{\mathsf{v}_1,...,\mathsf{v}_k}\left[\norm{c_1\mathsf{v}_1+\cdots+c_k\mathsf{v}_k}^p\right] \leq \max_i|c_i|^p\;\mean_{\mathsf{v}_1,...,\mathsf{v}_k}\left[\norm{\mathsf{v}_1+\cdots+\mathsf{v}_k}^p\right].
		\end{equation}
	\end{theorem}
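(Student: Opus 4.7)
The plan is to reduce to the normalized case and then exploit convexity together with symmetry, coordinate by coordinate. First, letting $M := \max_i |c_i|$, the case $M = 0$ is trivial, so I would divide the desired inequality by $M^p$ and replace each $c_i$ by $c_i/M$. It thus suffices to prove: if $|c_i| \leq 1$ for all $i$, then $G(c_1,\ldots,c_k) \leq G(1,\ldots,1)$, where
\begin{equation*}
    G(t_1,\ldots,t_k) := \mean\left[\nnorm{t_1 \mathsf{v}_1 + \cdots + t_k \mathsf{v}_k}^p\right].
\end{equation*}

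Next, I would establish two structural properties of $G$. On the one hand, $G$ is separately convex in each $t_j$: for fixed samples of $\mathsf{v}_1,\ldots,\mathsf{v}_k$, the map $t_j \mapsto \nnorm{\sum_i t_i \mathsf{v}_i}$ is convex (the composition of an affine map with a norm), and then $\nnorm{\cdot}^p$ remains convex since $s\mapsto s^p$ is non-decreasing and convex on $[0,\infty)$ for $p\geq 1$; expectations preserve convexity. On the other hand, $G$ is separately even in each $t_j$: since $\mathsf{v}_j$ is symmetric, $-\mathsf{v}_j$ has the same distribution as $\mathsf{v}_j$, and by independence $(\mathsf{v}_1,\ldots,-\mathsf{v}_j,\ldots,\mathsf{v}_k)$ and $(\mathsf{v}_1,\ldots,\mathsf{v}_j,\ldots,\mathsf{v}_k)$ share the same joint distribution, so flipping the sign of $t_j$ (equivalently, of $\mathsf{v}_j$) does not alter $G$.

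Finally, any convex even function $f : \reals \rightarrow \reals$ is non-decreasing in $|t|$: for $0 \leq x \leq y$, writing $x = \frac{y+x}{2y} \cdot y + \frac{y-x}{2y} \cdot (-y)$ and combining $f(-y) = f(y)$ with convexity yields $f(x) \leq f(y)$. Applying this to $G$ one coordinate at a time, for $|c_1|\leq 1$ we obtain $G(c_1,c_2,\ldots,c_k) \leq G(1,c_2,\ldots,c_k)$; iterating over the remaining coordinates gives $G(c_1,\ldots,c_k) \leq G(1,\ldots,1)$, which is the claim. The main obstacle is just bookkeeping the integrability: if $G(1,\ldots,1) = \infty$ the inequality is vacuous, and if it is finite, then each of the intermediate expectations $G(1,\ldots,1,c_{j+1},\ldots,c_k)$ is also finite by the monotonicity already established, so every step of the iteration is well-defined.
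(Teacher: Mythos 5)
Your argument is correct. Note, however, that the paper does not prove this statement at all: Theorem \ref{thm:kahane} is imported verbatim from the literature (Schwartz, \emph{Geometry and Probability in Banach Spaces}, Theorem 12.1), so there is no in-paper proof to compare against. What you have written is essentially the standard textbook proof of the Kahane contraction principle, and all the steps check out: the reduction to $\max_i|c_i|=1$ is sound (with the degenerate case $M=0$ handled separately); the separate convexity of $G$ in each $t_j$ follows, as you say, from composing an affine map with the norm and then with the convex non-decreasing map $s\mapsto s^p$ on $[0,\infty)$, and expectation preserves convexity even in the extended-real-valued sense; the separate evenness genuinely uses both the symmetry of $\mathsf{v}_j$ and the independence (so that negating one coordinate leaves the joint law, a product measure, unchanged); and the elementary fact that a convex even function of one real variable is non-decreasing in $|t|$ closes the coordinate-by-coordinate induction. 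Your remark on integrability is also handled correctly --- the inequality $G(\ldots,c_j,\ldots)\leq G(\ldots,1,\ldots)$ holds in $[0,\infty]$ regardless, so no finiteness hypothesis is actually needed. The only net effect of your work relative to the paper is that it makes the paper self-contained on this point, at the cost of a page of standard material the author chose to delegate to a reference.
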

	Using the Kahane contraction principle, we can construct an efficient approximation scheme, even though it is not a constant-ratio approximation:
	\begin{theorem}
		\label{thm:main}
		Let $\mat{A} \in \reals^{m\times n}$, $\mat{B}\in \reals^{l\times n}$, and assume $\mat{B}$ is injective. Then, for any $p\in[1,\infty)$,
		\begin{equation}
			\label{eq:main_thm_q=1}
			\norm{\mat{A}}_{p\mapsto1;\mat{B}} \leq \min_{\mat{B}^\top\mat{Y} = \mat{A}^\top} \norm{\mat{Y}}_{L_{1,p^*}^\top} \leq \frac{\gamma_p\sqrt{m}}{\gamma_1}\norm{\mat{A}}_{p\mapsto1;\mat{B}},
		\end{equation}
		where $\gamma_s = \left(\frac{2^{s/2}}{\sqrt{\pi}}\Gamma\left(\frac{s+1}{2}\right)\right)^{1/s}$ for $s\in[1,\infty)$.
	\end{theorem}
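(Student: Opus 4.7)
The plan is to prove the two inequalities separately, using the duality apparatus of Section \ref{sec:push-forward_pull-back} together with a Gaussian probabilistic relaxation. For the first inequality, Corollary \ref{cor:matrix_norm_and_optimization} (taking $\v{c}=\v{0}$ and observing that $1^*=\infty$) rewrites
\begin{equation*}
\norm{\mat{A}}_{p\mapsto 1; \mat{B}} = \max_{\norm{\v{\alpha}}_\infty \leq 1} \min_{\mat{B}^\top \v{\beta} = \mat{A}^\top \v{\alpha}} \norm{\v{\beta}}_{p^*}.
\end{equation*}
For any $\mat{Y}$ satisfying $\mat{B}^\top \mat{Y} = \mat{A}^\top$, the choice $\v{\beta} = \mat{Y}\v{\alpha}$ is feasible in the inner problem. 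Since $|(\mat{Y}\v{\alpha})_i| \leq \norm{\v{r}_i}_1$ whenever $\norm{\v{\alpha}}_\infty \leq 1$ (where $\v{r}_i$ denotes the $i$-th row of $\mat{Y}$), summing over $i$ yields $\norm{\mat{Y}\v{\alpha}}_{p^*} \leq \norm{\mat{Y}}_{L_{1, p^*}^\top}$, and minimising over $\mat{Y}$ completes this half.

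For the second inequality, I would first dualize via Proposition \ref{prop:constrained_norm_optimization} combined with Lemma \ref{lmm:dual_L_p_q} to obtain
\begin{equation*}
\min_{\mat{B}^\top \mat{Y} = \mat{A}^\top} \norm{\mat{Y}}_{L_{1, p^*}^\top} = \max_{\norm{\mat{B}\mat{Z}}_{L_{\infty, p}^\top} \leq 1} \trace(\mat{A}\mat{Z}),
\end{equation*}
and then, for an arbitrary feasible $\mat{Z}$, apply Lemma \ref{lmm:vijay} to the pair $(\v{x}, \v{y}) = (\mat{Z}\v{\grand}, \sign(\v{\grand}))$, where $\v{\grand} \in \reals^m$ is standard Gaussian. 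The identity $\mean[\sign(\grand_j)\grand_k] = \gamma_1 \delta_{jk}$ makes the numerator collapse to $\gamma_1 \trace(\mat{A}\mat{Z})$, while $\norm{\sign(\v{\grand})}_\infty = 1$ trivialises one denominator factor. Lemma \ref{lmm:E_ugamma} gives
\begin{equation*}
\mean\bigl[\norm{\mat{B} \mat{Z} \v{\grand}}_p^p\bigr] = \gamma_p^p \sum_{i=1}^l \norm{\v{b}_i^\top \mat{Z}}_2^p \leq \gamma_p^p m^{p/2} \sum_{i=1}^l \norm{\v{b}_i^\top \mat{Z}}_\infty^p \leq \gamma_p^p m^{p/2},
\end{equation*}
where $\v{b}_i^\top$ is the $i$-th row of $\mat{B}$ and I use the elementary estimate $\norm_2 \leq \sqrt{m}\norm_\infty$ on $\reals^m$ combined with $\norm{\mat{B}\mat{Z}}_{L_{\infty, p}^\top}\leq 1$. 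Jensen's inequality then yields $\mean[\norm{\mat{B}\mat{Z}\v{\grand}}_p] \leq \gamma_p \sqrt{m}$, and plugging everything into Lemma \ref{lmm:vijay} produces $\trace(\mat{A}\mat{Z}) \leq (\gamma_p \sqrt{m}/\gamma_1)\norm{\mat{A}}_{p\mapsto 1; \mat{B}}$. Maximising over $\mat{Z}$ concludes the proof.

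The main obstacle is the choice of substitution in Lemma \ref{lmm:vijay}: pairing $\mat{Z}\v{\grand}$ with $\sign(\v{\grand})$ (rather than, e.g., $\v{\grand}$ itself) is precisely what makes the numerator reproduce the trace while keeping $\norm{\v{y}}_\infty$ deterministic, and the feasibility of $\mat{Z}$ has to be exploited through the slightly counter-intuitive chain $\norm{\v{b}_i^\top\mat{Z}}_2 \leq \sqrt{m}\,\norm{\v{b}_i^\top \mat{Z}}_\infty$. An alternative route is to replace $\v{\grand}$ by a Rademacher vector $\v{\epsilon}$, take $(\v{x}, \v{y}) = (\mat{Z}\v{\epsilon}, \v{\epsilon})$, and use the decomposition $\grand_j \stackrel{d}{=} |\grand_j|\epsilon_j$ together with Jensen and the Kahane contraction principle (Theorem \ref{thm:kahane}) to show $\mean_{\v{\epsilon}}[\norm{\mat{B}\mat{Z}\v{\epsilon}}_p] \leq \gamma_1^{-1}\mean_{\v{\grand}}[\norm{\mat{B}\mat{Z}\v{\grand}}_p]$, which is the natural origin of the $1/\gamma_1$ factor in the approximation ratio.
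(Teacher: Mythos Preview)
Your proposal is correct and follows the same overall strategy as the paper: the first inequality via the max--min reformulation from Corollary~\ref{cor:matrix_norm_and_optimization}, the second via the dual formulation $\max_{\norm{\mat{B}\mat{Z}}_{L_{\infty,p}^\top}\le 1}\trace(\mat{A}\mat{Z})$ combined with a Gaussian averaging argument. The numerator computation (using $\mean[\sign(\grand_j)\grand_k]=\gamma_1\delta_{jk}$) and the joint application of Lemma~\ref{lmm:vijay} to $(\mat{Z}\v{\grand},\sign(\v{\grand}))$ are equivalent to what the paper does with $\mat{V}=\mat{I}_m$ and the Alon--Naor identity~\eqref{eq:alon_naor}.

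The one genuine difference is in bounding the denominator. The paper invokes the Kahane contraction principle (Theorem~\ref{thm:kahane}) to pass from $\mean[\lvert\sum_k c_k\grand_k\rvert^p]$ to $\max_k|c_k|^p\cdot\mean[\lvert\sum_k\grand_k\rvert^p]=\max_k|c_k|^p\cdot m^{p/2}\gamma_p^p$. You instead use Lemma~\ref{lmm:E_ugamma} to compute $\mean[\lvert\langle\v{c},\v{\grand}\rangle\rvert^p]=\gamma_p^p\norm{\v{c}}_2^p$ exactly, and then the elementary inequality $\norm{\v{c}}_2\le\sqrt{m}\,\norm{\v{c}}_\infty$. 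This yields the same bound but is strictly more elementary: it removes the only place in the paper's proof where Kahane is needed, so Theorem~\ref{thm:kahane} becomes unnecessary for this theorem. Your closing remark about the Rademacher route via Kahane is in fact closer to how the paper actually argues.
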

	\begin{proof}
		Similarly to the other proofs of this section, we consider the first and second inequality of \eqref{eq:main_thm_q=1} separately.
		
		\medspace
		
		\noindent\textbf{Step 1: The first inequality}
		
		For any matrix $\mat{Y}$ with rows $\v{\upsilon}_i$, by duality
		\begin{align*}
			\norm{\mat{Y}}_{L_{1,p^*}^\top} 
			= \Big(\sum_i\max_{\norm{\v{\sigma}}_{\infty}\leq 1}|\v{\upsilon}_i^\top\v{\sigma}|^{p^*}\Big)^{1/p^*}
			\geq\max_{\norm{\v{\sigma}}_{\infty}\leq 1}\norm{\mat{Y}\v{\sigma}}_{p^*},
		\end{align*}
		where for the inequality we used the fact that, for any family of functions $f_i(x)$, there holds $\max_x \sum_i f_i(x) \leq \sum_i \max_x f_i(x)$. Additionally, using the property that for any function $f(x,y)$ there holds $\max_x\min_y f(x,y) \leq \min_y \max_x f(x,y)$, we conclude
		\begin{equation*}
			\min_{\mat{B}^\top\mat{Y} = \mat{A}^\top} \;\, \max_{\norm{\v{\sigma}}_{\infty}\leq 1} \;\, \norm{\mat{Y}\v{\sigma}}_{p^*} \geq \max_{\norm{\v{\sigma}}_{\infty}\leq 1} \;\, \min_{\mat{B}^\top\mat{Y} = \mat{A}^\top} \;\, \norm{\mat{Y}\v{\sigma}}_{p^*}.
		\end{equation*}
		It then suffices to apply Proposition \ref{prop:constrained_norm_optimization} to get
		\begin{equation*}
			\min_{\mat{B}^\top\mat{Y} = \mat{A}^\top} \norm{\mat{Y}}_{L_{1,p^*}^\top} \geq \max_{\norm{\v{\sigma}}_{\infty}\leq1} \;\, \max_{\norm{\mat{B}\v{x}}_p\leq 1} \;\, \v{\sigma}^\top\mat{A}\v{x} = \max_{\norm{\mat{B}\v{x}}_p\leq 1} \norm{\mat{A}\v{x}}_1 = \norm{\mat{A}}_{p\mapsto1;\mat{B}}.
		\end{equation*}
		
		\medskip
		
		\noindent\textbf{Step 2: Probabilistic relaxation}
		
		We turn to the second inequality of \eqref{eq:main_thm_q=1}.	
		For $i=1,\hdots,n$, let $\v{u}_i \in \reals^{m}$ be arbitrary but fixed vectors that are not all zero, and let $\mat{U}$ be the matrix with columns $\v{u}_i$.	
		As in the proof of Theorem \ref{thm:main_q<2<p}, we transform $\norm{\mat{A}}_{p\mapsto 1;\mat{B}}$ using duality:
		\begin{align*}
			\norm{\mat{A}}_{p\mapsto 1;\mat{B}} =\max_{\v{x}\neq \v{0}, \v{y} \neq \v{0}}\frac{\sum_{ij}A_{ji}x_iy_j}{\norm{\mat{B}\v{x}}_p\norm{\v{y}}_{\infty}}.
		\end{align*}
		Using the same arguments as in the proof of Theorem \ref{thm:main_q=1_2<p} but with $\mat{V} = \mat{I}_m$, for a standard Gaussian random variable $\v{\grand}$ in $\reals^{m}$ we get
		\begin{equation}
			\label{eq:numerator_denumerator}
			\norm{\mat{A}}_{p\mapsto 1;\mat{B}} \geq \frac{\mean_{\v{\grand}}\left[\max_{\v{y}\neq \v{0}}\frac{\sum_{ij}A_{ji}\v{u}_i^\top\v{\grand}\cdot y_j}{\norm{\v{y}}_{\infty}}\right]}{\mean_{\v{\grand}}\big[\norm{\mat{B}\,\mat{U}^\top\v{\grand}}_p\big]} \geq \frac{\gamma_1\cdot\trace(\mat{A}\,\mat{U}^\top)}{\mean_{\v{\grand}}\big[\norm{\mat{B}\,\mat{U}^\top\v{\grand}}_p\big]}.
		\end{equation}
		
		\noindent\textbf{Step 3: Bounding the denominator}
		
		We start with a standard application of Jensen's inequality since the function $x \rightarrow x^{1/p}$ is concave for $p\geq 1$:
		\begin{align*}
			\mean_{\v{\grand}}\big[\nnorm{\mat{B}\,\mat{U}^\top\v{\grand}}_p\big]  &= \mean_{\v{\grand}} \big[   \sum_i\big|   \sum_j B_{ij}\iprod{\v{u}_j}{\v{\grand}}_{\reals^m}   \big|^p   \big]^{1/p}\\
			&\leq \big[\sum_i\mean_{\v{\grand}}\big[\big|\langle\sum_j B_{ij}\v{u}_j,\v{\grand}\rangle_{\reals^m}\big|^p\big]\big]^{1/p}.
		\end{align*}
		Using the Kahane contraction principle on $\mean_{\v{\grand}}\big[\big|\langle\sum_j B_{ij}\v{u}_j,\v{\grand}\rangle_{\reals^m}\big|^p\big]$ gives us
		\begin{equation*}
			\mean_{\v{\grand}}\big[\big|\langle\sum_j B_{ij}\v{u}_j,\v{\grand}\rangle_{\reals^m}\big|^p \big]\leq \max_k\big|\sum_jB_{ij}u_{j,k}\big|^p\mean_{\v{\grand}}\left[|\grand_1+\cdots+\grand_{m}|^p\right].
		\end{equation*}
		Since $\grand_i \sim \mathcal{N}(0,1)$, we have $\grand_1+\cdots\grand_{m} \sim \mathcal{N}(0,\sqrt{m})$, and thus $\mean_{\v{\grand}}\left[|\grand_1+\cdots+\grand_{m}|^p\right] = \sqrt{m}^p\gamma_p^p$.
		Overall, we obtain
		\begin{equation*}
			\mean_{\v{\grand}}\left[\nnorm{\mat{B}\,\mat{U}^\top\v{\grand}}_p\right] \leq \gamma_p\sqrt{m}\cdot\big(\sum_i \max_k \big|\sum_j B_{ij}u_{j,k}\big| \big)^{1/p} = \gamma_p\sqrt{m}\cdot\norm{\mat{B}\mat{U}^\top}_{L_{\infty,p}^\top}.
		\end{equation*}
		
		\noindent\textbf{Step 4: Final transformation}
		
		Combining the results from Steps 2 and 3,
		\begin{equation*}
			\max_{\nnorm{\mat{B}\,\mat{X}}_{L_{\infty,p}^\top}\leq1} \trace(\mat{A}\mat{X}) \leq \frac{\gamma_p\sqrt{m}}{\gamma_1}\cdot \norm{\mat{A}}_{p\mapsto1;\mat{B}}.
		\end{equation*}
		It now suffices to use Proposition \ref{prop:constrained_norm_optimization} to transform the left-hand side:
		\begin{equation}
			\max_{\norm{\mat{B}\,\mat{X}}_{L_{\infty,p}^\top}\leq 1} \trace(\mat{A}\,\mat{X}) = \min_{\mat{B}^\top\mat{Z} = \mat{A}^\top} \norm{\mat{Z}}_{L_{1,p^*}^\top},
		\end{equation}
		which completes the proof.
	\end{proof}
	\begin{remark}
		Note that we have stated Theorem \ref{thm:main} for $p\in[1,\infty)$, even though for $p\in [2, \infty)$ a better approximation is available through Theorem \ref{thm:main_q=1_2<p}. This is because the approximation of Theorem \ref{thm:main} is easier to evaluate, which has applications as discussed in \cite{Kulmburg2024}.
	\end{remark}
	Besides the result of Theorem \ref{thm:main}, a different approximation is available for $p\in[1,2)$ and $q=1$, based on the approximation of Theorem \ref{thm:main_q=1_2<p}:
	\begin{corollary}
		\label{cor:main_nesterov}
		Let $\mat{A} \in \reals^{m\times n}$, $\mat{B}\in \reals^{l\times n}$, and assume $\mat{B}$ is injective. Then, for any $p\in[1,2)$,
		\begin{equation}
			\label{eq:main_cor_q=1}
			\norm{\mat{A}}_{p\mapsto1;\mat{B}} \leq \frac{1}{2}\min_{(\v{v},\v{w})\in K} \left(\norm{\v{v}}_{1}+\norm{\v{w}}_{\infty}\right) \leq \sqrt{\pi/2}\cdot l^{1/p-1/2}\norm{\mat{A}}_{p\mapsto1;\mat{B}},
		\end{equation}
		where
		\begin{equation}
			\label{eq:redef_K}
			K := \Set{(\v{v},\v{w}) \in \reals^{m}\times \reals^{l}}{\begin{bmatrix}\Diag(\v{v}) & -\mat{A}\\-\mat{A}^\top & \mat{B}^\top\Diag(\v{w})\mat{B}\end{bmatrix} \succeq 0}.
		\end{equation}
	\end{corollary}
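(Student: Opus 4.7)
The plan is to derive Corollary \ref{cor:main_nesterov} by reducing the case $p \in [1,2)$ to the already established case $p = 2$ from Theorem \ref{thm:main_q=1_2<p}, using the standard equivalence between vector $p$-norms on $\reals^l$.

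First I would record the norm equivalence on $\reals^l$: for any $\v{z} \in \reals^l$ and $p \in [1,2]$, Hölder's inequality (with conjugate exponents $2/p$ and $2/(2-p)$) gives
\begin{equation*}
	\norm{\v{z}}_2 \leq \norm{\v{z}}_p \leq l^{1/p - 1/2}\norm{\v{z}}_2.
\end{equation*}
Applying this to $\v{z} = \mat{B}\v{x}$, the feasible sets for the generalized $p\mapsto 1$ and $2\mapsto 1$ problems satisfy
\begin{equation*}
	\{\v{x}: \norm{\mat{B}\v{x}}_p\leq 1\} \;\subseteq\; \{\v{x}: \norm{\mat{B}\v{x}}_2\leq 1\} \;\subseteq\; l^{1/p - 1/2}\{\v{x}: \norm{\mat{B}\v{x}}_p\leq 1\},
\end{equation*}
and therefore, taking the supremum of $\norm{\mat{A}\v{x}}_1$ over each set,
\begin{equation*}
	\norm{\mat{A}}_{p\mapsto 1;\mat{B}} \;\leq\; \norm{\mat{A}}_{2\mapsto 1;\mat{B}} \;\leq\; l^{1/p - 1/2}\norm{\mat{A}}_{p\mapsto 1;\mat{B}}.
\end{equation*}

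Next I would invoke Theorem \ref{thm:main_q=1_2<p} at the endpoint $p = 2$. The set $K$ in Theorem \ref{thm:main_q=1_2<p} does not depend on $p$, so it coincides verbatim with the $K$ defined in \eqref{eq:redef_K}. The exponent $p/(p-2)$ is not literally defined at $p = 2$, but from the proof of Theorem \ref{thm:main_q=1_2<p} (specifically, the minimax step that produces the dual norms of $\norm{\v{t}}_{q^*/2}$ and $\norm{\v{s}}_{p/2}$) the constraint $\norm{\v{s}}_{p/2}\leq 1$ collapses to $\norm{\v{s}}_{1}\leq 1$, whose dual is the $\infty$-norm. Thus at $p = 2$ the theorem yields
\begin{equation*}
	\norm{\mat{A}}_{2\mapsto 1;\mat{B}} \;\leq\; \tfrac{1}{2}\min_{(\v{v},\v{w}) \in K}\left(\norm{\v{v}}_1 + \norm{\v{w}}_\infty\right) \;\leq\; \frac{\gamma_2}{\gamma_1}\norm{\mat{A}}_{2\mapsto 1;\mat{B}},
\end{equation*}
and a direct computation gives $\gamma_2 = 1$ and $\gamma_1 = \sqrt{2/\pi}$, so $\gamma_2/\gamma_1 = \sqrt{\pi/2}$.

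Finally, chaining the two displays proves \eqref{eq:main_cor_q=1}: the lower bound from the norm equivalence plus Theorem \ref{thm:main_q=1_2<p} at $p=2$ gives the left inequality, while the right inequality follows from
\begin{equation*}
	\tfrac{1}{2}\min_{(\v{v},\v{w}) \in K}\left(\norm{\v{v}}_1 + \norm{\v{w}}_\infty\right) \leq \sqrt{\pi/2}\,\norm{\mat{A}}_{2\mapsto 1;\mat{B}} \leq \sqrt{\pi/2}\cdot l^{1/p-1/2}\,\norm{\mat{A}}_{p\mapsto 1;\mat{B}}.
\end{equation*}
The only subtle point—and arguably the main obstacle—is justifying the boundary value $p = 2$ in Theorem \ref{thm:main_q=1_2<p}; once one traces back to the minimax/duality step in its proof and verifies that the formula extends continuously by interpreting $\norm{\v{w}}_{p/(p-2)}$ as $\norm{\v{w}}_\infty$, everything else is a direct composition of the two bounds.
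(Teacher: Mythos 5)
Your proposal is correct and follows essentially the same route as the paper: reduce to the case $p=2$ via the equivalence $\norm{\v{z}}_2 \leq \norm{\v{z}}_p \leq l^{1/p-1/2}\norm{\v{z}}_2$, then apply Theorem \ref{thm:main_q=1_2<p} at $p=2$ (where $p/(p-2)$ is read as $\infty$ and $\gamma_2/\gamma_1 = \sqrt{\pi/2}$). The paper's proof is exactly this chain of four inequalities, so no further comparison is needed.
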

	\begin{proof}
		If $p\in[1,2)$, \eqref{eq:main_cor_q=1} follows directly from Theorem \ref{thm:main_q=1_2<p} (for the case $p=2$) and the fact that for any vector $\v{v}\in\reals^{l}$, $\norm{\v{v}}_2 \leq \norm{\v{v}}_p \leq l^{1/p-1/2} \norm{\v{v}}_2$:
		\begin{align*}
			\norm{\mat{A}}_{p\mapsto1;\mat{B}}
			&\leq \norm{\mat{A}}_{2\mapsto1;\mat{B}} && \text{(since $\norm{\v{v}}_2 \leq \norm{\v{v}}_p$)}\\
			&\leq \frac{1}{2}\min_{(\v{v},\v{w})\in K} \left(\norm{\v{v}}_{1}+\norm{\v{w}}_{\infty}\right) && \text{(using Theorem \ref{thm:main_q=1_2<p})}\\
			&\leq \sqrt{\pi/2}\cdot\norm{\mat{A}}_{2\mapsto1;\mat{B}}&& \text{(using Theorem \ref{thm:main_q=1_2<p})}\\
			&\leq \sqrt{\pi/2}\cdot l^{1/p-1/2}\norm{\mat{A}}_{p\mapsto1;\mat{B}} && \text{(since $\norm{\v{v}}_p \leq l^{1/p-1/2} \norm{\v{v}}_2$)},
		\end{align*}
		with $K$ as defined in \eqref{eq:redef_K}.
	\end{proof}
	\begin{remark}
		The approximations of Corollary \ref{cor:main_nesterov} should be handled with care; while they may scale better than Theorem \ref{thm:main} for $p\in(1,2)$, if $l=m$ and $p=1$, the approximation ratio from Theorem \ref{thm:main} is always better than that of Corollary \ref{cor:main_nesterov}. For $p\in(1,2)$, this depends on the values of $l$ and $p$. Additionally, the approximations of Theorem \ref{thm:main} can typically be computed much faster than that of Corollary \ref{cor:main_nesterov}, since they are convex optimization problems with linear equality constraints, whereas the approximations from Corollary \ref{cor:main_nesterov} require solving a semidefinite program, which is more time consuming in practice.
	\end{remark}
	
	\section{Experimental Results}
	\label{sec:numerical_evaluations}
	
	To evaluate the accuracy of the approximations we have derived in Section \ref{sec:approximable}, we examine their performance numerically. We do so only for $q=1$, as in this case Corollary \ref{cor:matrix_norm_and_optimization} implies
	\begin{equation}
		\norm{\mat{A}}_{p\mapsto 1;\mat{B}} = \max_{\norm{\v{\alpha}}_{\infty}\leq 1} \;\, \min_{\mat{B}^\top\v{\beta} = \mat{A}^\top\v{\alpha}} \;\, \nnorm{\v{\beta}}_{p^*}.
	\end{equation}
	The term
	\begin{equation}
		\label{eq:loupe_term}
		\min_{\mat{B}^\top\v{\beta} = \mat{A}^\top\v{\alpha}}\nnorm{\v{\beta}}_{p^*}
	\end{equation}
	can be evaluated in polynomial time using interior point methods (see \cite[Chapter 11]{Boyd2004}) since it is an equality-constrained convex minimization problem. Furthermore, \eqref{eq:loupe_term} is convex in $\v{\alpha}$, so by the Bauer maximum principle the exact value of $\norm{\mat{A}}_{p\mapsto 1;\mat{B}}$ can be computed by taking the maximum of \eqref{eq:loupe_term} over all $\v{\alpha}\in \{-1,+1\}^{m}$. A similar approach would not work for $q > 1$, since the set $\{\v{\alpha}\in\reals^{m}\,|\,\norm{\v{\alpha}}_{q^*}\leq 1\}$ has infinitely many extreme points.
	
	Let $\mathtt{approx}_p(\mat{A}, \mat{B})$ denote the approximation of Theorem \ref{thm:main}, Theorem \ref{thm:main_q=1_2<p}, or Corollary \ref{cor:main_nesterov} for matrices $\mat{A}$ and $\mat{B}$. Then
	\begin{equation*}
		\rho(\mat{A},\mat{B}) := \frac{\mathtt{approx}_p(\mat{A},\mat{B})}{\norm{\mat{A}}_{p\mapsto 1;\mat{B}}}
	\end{equation*}
	is a lower bound of the actual approximation ratio. Using a global optimization solver (in our case, we used the surrogate optimization solver from MATLAB, see \cite{Gutmann2001}), one can search for the maximum of $\rho(\mat{A}, \mat{B})$ as a function of $\mat{A}$ and $\mat{B}$.
	Concretely, we searched for the maximal value $\rho_{\max}$ of $\rho(\mat{A}, \mat{B})$ over the set
	\begin{equation*}
		\Set{(\mat{A},\mat{B})\in\reals^{m\times n}\times \reals^{l\times n}}{\norm{\mat{A}}_{L_{\infty, \infty}}\leq 10, \norm{\mat{B}}_{L_{\infty, \infty}}\leq 10}.
	\end{equation*}
	For each iteration, we verified that $\mat{B}$ was injective; If not, $\rho(\mat{A}, \mat{B})$ was manually set to 1 to not impact the final result. For the cases where $p\neq 1$, we used 1000 iterations for each choice of $l,m,$ and $n$, while for $p=1$, we used 10000 iterations. This is because for $p=1$, computing the exact value of $\norm{\mat{A}}_{1\mapsto 1;\mat{B}}$ can be reformulated as a zonotope containment problem (see \cite{Kulmburg2021}), for which there are faster algorithms, such as the algorithm $\verb|polymax|$ from \cite{Kulmburg2021}, computed using the CORA toolbox \cite{Althoff2015}. 
	
	\begin{figure}[t!]
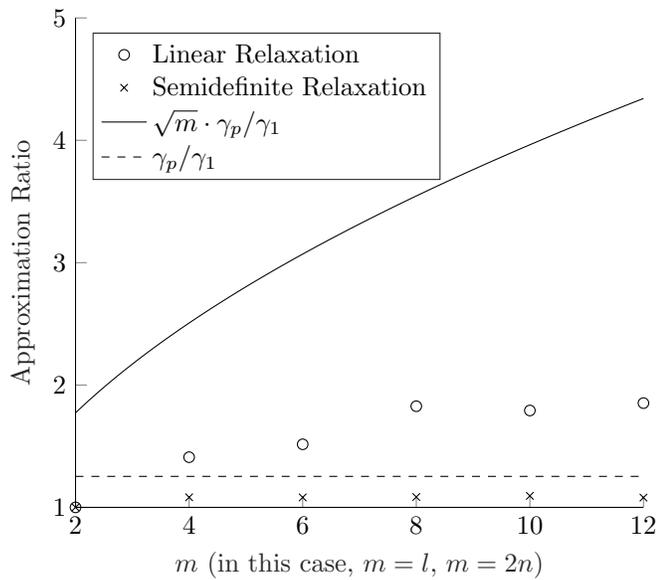

		\centering
%
		\caption{Numerical results for $q=1$ and $p=2$. The semidefinite relaxation yields, as expected, a worst-case approximation ratio better than $\gamma_2/\gamma_1 = \sqrt{\pi/2}$. It also performs significantly better than the linear relaxation.}
		\label{fig:p_equals_2}
	\end{figure}
	
	For $p=1.5$ and $p=2$, we compared the results of the approximation from Theorem \ref{thm:main}, which we call the Linear Relaxation approach, to that of Corollary \ref{cor:main_nesterov} or Theorem \ref{thm:main_q=1_2<p} (depending on $p$), which we call the Semidefinite Relaxation approaches. We chose $l, m,$ and $n$ so that $l = m = 2n$ and carried out the experiments for different values of $n=1,\hdots, 6$. The results can be seen in Fig. \ref{fig:p_equals_1.5} and Fig. \ref{fig:p_equals_2}.
	
	For $p=1$, we instead evaluated only the linear relaxation, for each $m = n,\hdots,20$, with $n=1,\hdots,9$ and $l = m$. In Figure \ref{fig:p_equals_1}, we show the maximum value of $\rho_{\max}$ for fixed values of $m$, over multiple values of $n$.
	
	We used the YALMIP toolbox (see \cite{Lofberg2004}) combined with the MOSEK solver \cite{Mosek2022} to solve the linear and semidefinite optimization problems. The code used to generate Fig. \ref{fig:p_equals_1.5}-\ref{fig:p_equals_1} can be found in the Code Ocean capsule available at the URL
	
	\medskip
	
	\begin{center}
		\url{https://codeocean.com/capsule/8982170/tree}
	\end{center}
	
	\medskip
	
	\subsection{Discussion}
	
	As can be seen in Figures \ref{fig:p_equals_1.5}-\ref{fig:p_equals_1}, the bounds deduced in Theorem \ref{thm:main_q=1_2<p}, Theorem \ref{thm:main}, and Corollary \ref{cor:main_nesterov} hold for all tested samples (i.e., the theoretical worst-case approximation ratio always over-approximates $\rho_{\max}$). However, in some cases our bounds seem to be too conservative. This is particularly apparent for $p=1$. In fact, for higher values of $m$, the value of $\rho_{\max}$ even seems to \emph{drop}, contrary to our expectations. This can be partly explained by the fact that, for higher $l$, $m$, and $n$, the size of the space of possible matrices $\mat{A}$ and $\mat{B}$ considerably increases, so the solver computing $\rho_{\max}$ does not have enough iterations to converge to the actual maximum for $\rho_{\max}$. Due to the high computational cost of evaluating the exact value of $\norm{\mat{A}}_{p\mapsto 1;\mat{B}}$, we could only perform our experiments with relatively low values of $l$, $m$, and $n$. It is possible that, for larger $l$, $m$, and $n$, higher values could be found for $\rho_{\max}$.
	
	\begin{figure}[t!]
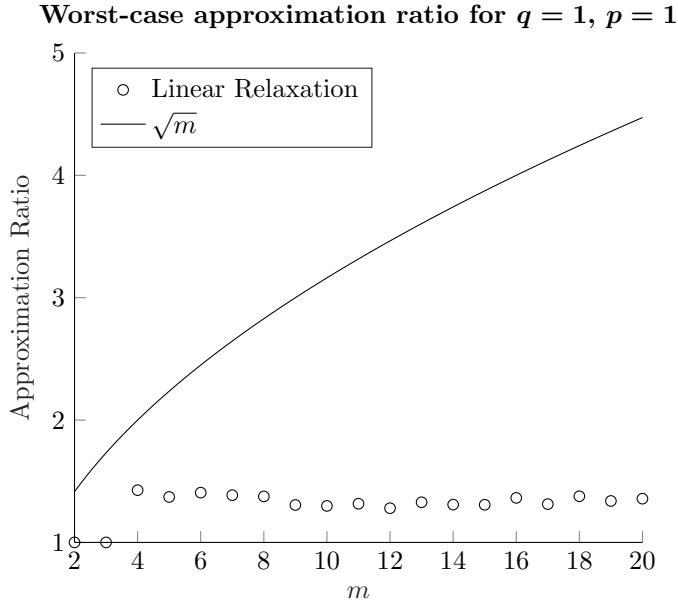

		\centering
%
		\caption{Numerical results for $q=1$ and $p=1$. Since, in this case, the exact result can be formulated as a zonotope containment problem, a larger number of cases could be evaluated.}
		\label{fig:p_equals_1}
	\end{figure}
	
	\section{Conclusions}
	The concepts of push-forward and pull-back of norms, as well as the fact that these are dual to each other, allowed us to reformulate the generalized matrix norm problem. In particular, we discovered that max-min problems involving vector norms could sometimes be reformulated, via duality, as generalized matrix norm problems. This kind of max-min problems occurs naturally in the context of containment problems \cite{Kulmburg2021,Kulmburg2024}, but it would be interesting to investigate whether such problems can also be encountered in other contexts, for example as approximations to more general max-min problems.
	
	We also analyzed the computability of the matrix norm $\norm{\mat{A}}_{p\mapsto q;\mat{B}}$
	for different values of $p$ and $q$. In some cases, we found exact algorithms that run in polynomial time with respect to the representation size of $\mat{A}$ and $\mat{B}$. For cases where such exact algorithms were not available, we have introduced approximations that run in polynomial time. However, in some instances, these approximations seem to perform significantly better than the error bounds we deduced, which leaves open the possibility that more accurate error bounds could be found in the future, especially for the case $q=1$, $p\in[1,2)$.
	
	\appendix
	\section{The Convex Conjugate of a Norm}
	\label{sec:convex_conjugate}
	Let $\Vset$ be a topological vector space, with dual space $\Vset^*$ as discussed in Section \ref{sec:duality}. For a function $\tau : \Vset \rightarrow \reals$, the \emph{convex conjugate} $\tau^\bigstar : \Vset^* \rightarrow \reals$ is
	\begin{equation}
		\tau^\bigstar(f) := \sup_{v \in \Vset} f(v) - \tau(v).
	\end{equation}
	For more information on the convex conjugate and its properties, we refer to \cite[Section 3]{Broendsted1964}.
	\begin{lemma}
		\label{lmm:convex_conjugate_norm}
		Let $(\Vset, \norm_{\Vset})$ be a normed space. Then for $f \in \Vset^*$, the convex conjugate $\norm{f}_{\Vset}^{\bigstar}$ (not to be confused with the dual norm $\norm{f}_{\Vset}^*$) is given by
		\begin{equation}
			\norm{f}_{\Vset}^{\bigstar} = 
			\begin{cases}
				&0, \text{ if } \norm{f}_{\Vset}^* \leq 1,\\
				&\infty, \text{ otherwise}.
			\end{cases}
		\end{equation}
	\end{lemma}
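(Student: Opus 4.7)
The plan is to prove the two cases separately and directly from the definitions of the convex conjugate and the dual norm.

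First, I would handle the case $\norm{f}_{\Vset}^* \leq 1$. By definition of the dual norm in \eqref{eq:dual_norm_def}, for every $v \in \Vset$ we have $f(v) \leq \norm{f}_{\Vset}^* \cdot \norm{v}_{\Vset} \leq \norm{v}_{\Vset}$, so $f(v) - \norm{v}_{\Vset} \leq 0$. On the other hand, evaluating at $v = 0$ yields $f(0) - \norm{0}_{\Vset} = 0$, so the supremum is attained and equals $0$.

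Next, for the case $\norm{f}_{\Vset}^* > 1$, the plan is to exhibit a ray along which $f(v) - \norm{v}_{\Vset}$ diverges. By definition of the dual norm as a supremum over the unit ball, there exists $v_0 \in \Vset$ with $\norm{v_0}_{\Vset} \leq 1$ and $f(v_0) > 1$. For any scalar $t > 0$, using the linearity of $f$ and the absolute homogeneity of $\norm_{\Vset}$,
\begin{equation*}
    f(tv_0) - \norm{tv_0}_{\Vset} = t\bigl(f(v_0) - \norm{v_0}_{\Vset}\bigr) \geq t\bigl(f(v_0) - 1\bigr),
\end{equation*}
and since $f(v_0) - 1 > 0$, letting $t \to \infty$ drives this quantity to $\infty$. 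Hence the supremum defining $\norm{f}_{\Vset}^{\bigstar}$ is $\infty$.

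Combining both cases yields the claim. There is no real obstacle here: the argument is a standard two-case dichotomy, relying only on the definition of the dual norm and the homogeneity of $\norm_{\Vset}$. The only mildly delicate point is the strict inequality $f(v_0) > 1$ in the second case, which follows by taking $v_0$ strictly better than any particular threshold below $\norm{f}_{\Vset}^*$ (e.g., choose a unit vector $v_0'$ with $f(v_0') > 1$, which exists since $\sup_{\norm{v}\leq 1} f(v) > 1$).
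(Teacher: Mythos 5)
Your proof is correct and follows essentially the same two-case argument as the paper: for $\norm{f}_{\Vset}^* \leq 1$ bound $f(v) - \norm{v}_{\Vset} \leq 0$ and attain $0$ at $v=0$; for $\norm{f}_{\Vset}^* > 1$ pick $v_0$ in the unit ball with $f(v_0) > 1$ and scale along the ray $tv_0$ to get divergence. No substantive differences.
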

	\begin{proof}
		The proof is nearly identical to \cite[Example 3.26]{Boyd2004}: If $f \in \Vset^*$ with $\norm{f}_{\Vset}^* > 1$, by definition of the dual norm there exists a $v\in\Vset$ such that $\norm{v}_{\Vset} \leq 1$ but $f(v) > 1$. Taking $v' := tv$ for $t \in [0,\infty)$ and letting $t\rightarrow \infty$ we get
		\begin{equation*}
			f(v') - \norm{v'}_{\Vset} = t(f(v) - \norm{v}_{\Vset}) \rightarrow \infty,
		\end{equation*}
		which proves $\norm{f}_{\Vset}^{\bigstar} = \infty$ in this case. If $\norm{f}_{\Vset}^* \leq 1$, the definition of the dual norm implies $f(v) \leq \norm{f}_{\Vset}^* \cdot \norm{v}_{\Vset} \leq \norm{v}_{\Vset}$ for every $v \in \Vset$, so $f(v) - \norm{v}_{\Vset} \leq 0$. Consequently, $v = 0$ maximizes $f(v) - \norm{v}_{\Vset}$, thus $\norm{f}_{\Vset}^{\bigstar} = 0$ in this case.
	\end{proof}

	\section*{Acknowledgments}
	The author would like to thank Mark Wetzlinger and Florian Finkeldei for proofreading this manuscript, Matthias Althoff for his invaluable support throughout the composition of this article, as well as the anonymous reviewers for their meticulous corrections.

\end{document}